\documentclass[12pt]{amsart}
\usepackage[margin=1.5cm]{geometry}
\usepackage{setspace}
\usepackage{hyperref}
\usepackage{graphicx}
\usepackage{amsmath,amsthm}
\usepackage{amsfonts,amssymb}
\usepackage{amscd}
\usepackage{amsthm}
\usepackage{quiver}
\usepackage{verbatim}
\usepackage[cp1251]{inputenc}
\usepackage[english]{babel}
 \usepackage{tikz}
\usepackage[all,cmtip]{xy}
\usepackage{tikz-cd}
\usepackage{makecell,xcolor}
\usepackage{float}
\usepackage{subcaption} 
\usepackage{eucal}
\usepackage{mathalfa}
\usepackage{mathrsfs}
\usepackage{fancyhdr}
\fancyhf{}
\fancyhead[R]{\thepage}
\theoremstyle{plain}

\usetikzlibrary{matrix,arrows,decorations.pathmorphing}

\sloppy

\tikzstyle{slice}=[text=gray]

\usepackage{xcolor}

\usepackage{enumitem}
\frenchspacing

\pagestyle{plain}
 
%[section]
\newtheorem{lemma}{Lemma}%[section]
%[section]
%\renewcommand{\thepropos}{}
\newtheorem{theorem}{Theorem}%[section]
%[section]

\theoremstyle{remark}
\newtheorem*{remark}{Remark}
\newtheorem*{theorem*}{{\bf Theorem}}

\theoremstyle{definition}
\newtheorem{definition}{Definition}[section]

\newcommand{\End}{\mathop{\mathrm{End}}\nolimits}
\def\N{\mathbb N} \def\Z{\mathbb Z}

%% DEF: Р РЋР С”Р С•Р В±Р С”Р С‘

\newcommand{\proj}{{\mathrm{proj}}}
\newcommand{\Hom}{{\mathrm{Hom}}}

\newcommand{\RHom}{\mathop{\mathrm{RHom}}\nolimits}

\newcommand{\Aut}{{\mathrm{Aut}}}

\newcommand{\add}{{\mathrm{add}}}
\newcommand{\Rad}{\mathop{\mathrm{Rad}}\nolimits}
\newcommand{\Out}{{\mathrm{Out}}}
\newcommand{\Inn}{\mathop{\mathrm{Inn}}\nolimits}
\newcommand{\Pic}{{\mathrm{Pic}}}
\newcommand{\TrPic}{{\mathrm{TrPic}}}
\newcommand{\StPic}{{\mathrm{StPic}}}
\newcommand{\thick}{\mathop{\mathrm{thick}}\nolimits}
\newcommand{\Imm}{\mathop{\mathrm{Im}}\nolimits}
\newcommand{\D}{{\mathrm{D}}}
\newcommand{\Cmp}{{\mathrm{C}}}
\newcommand{\K}{{\mathrm{K}}}
\newcommand{\Li}{{\mathrm{L}}}
\newcommand{\charr}{{\mathrm{char}}}

\def\kk{\mathrm{k}}

\makeatletter
\renewcommand{\email}[2][]{%
  \ifx\emails\@empty\relax\else{\g@addto@macro\emails{,\space}}\fi%
  \@ifnotempty{#1}{\g@addto@macro\emails{\textrm{(#1)}\space}}%
  \g@addto@macro\emails{#2}%
}
\makeatother

\def\@evenhead{\hfil\sc A. Nordskova, \hfil} 
\def\@oddhead{\hfil\sc  The derived Picard groups of symmetric representation finite algebras of type D \hfil} 
\def\@oddfoot{\hfil\sc \thepage \hfil} 
\def\@evenfoot{\hfil\sc \thepage \hfil} 

\def\ps@firstpage{\ps@plain
  \def\@oddfoot{\hfil\sc \thepage \hfil}%

}
\makeatother

\author{Anya Nordskova}
\address{L. Euler International Mathematical Institute, Saint Petersburg State University, 14th Line V.O., 29B, St. Petersburg 199178, Russia.}

\thanks{\emph{email:} anya.nordskova@uhasselt.be  \\ \emph{Key words:} Derived Picard groups, symmetric algebras, silting mutations, spherical twists. } 
%\\ %\emph{ORCID}: 0000-0003-3592-4273

\setcounter{topnumber}{2}
\setcounter{bottomnumber}{2}
\setcounter{totalnumber}{4}

\setlength{\floatsep}{5pt plus 2pt minus 2pt}
\setlength{\textfloatsep}{5pt plus 2pt minus 2pt}
\setlength{\intextsep}{5pt plus 2pt minus 2pt}

\begin{document}

\title{Derived Picard groups of symmetric representation-finite algebras of type $D$}
\maketitle

\begin{abstract}
   We explicitly describe the derived Picard groups of symmetric representation-finite algebras of type $D$. In particular, we prove that these groups are generated by spherical twists along collections of $0$-spherical objects, the shift and autoequivalences which come from outer automorphisms of a particular representative of the derived equivalence class. The arguments we use are based on the fact that symmetric representation-finite algebras are tilting-connected. To apply this result we in particular develop a combinatorial-geometric model for silting mutations in type $D$, generalising the classical concepts of Brauer trees and Kauer moves. Another key ingredient in the proof is the faithfulness of the braid group action via spherical twists along $D$-configurations of $0$-spherical objects. \end{abstract}

\tableofcontents 

\newpage 

\section{Introduction}

Let $\kk$ be an algebraically closed field and let $A$ and $B$ be $\kk$-algebras. Classical Morita theory studies the relation of Morita equivalence. Namely $A$ and $B$ are Morita equivalent if their module categories are equivalent. One can go further and consider the weaker relation of derived equivalence instead. We say that two algebras $A$ and $B$ are derived equivalent if their bounded derived categories $\D^b(A)$ and $\D^b(B)$ are equivalent as triangulated categories. Rickard \cite{R}, \cite{R2} answered to the question when two algebras are derived equivalent, using the language of tilting complexes. His results were then generalised to the differential graded setting by Keller \cite{K}. The seminal work of Bondal \cite{Bo}, which most importantly jump-started the study of equivalences between derived categories of algebraic and geometric origin, also can be applied to the case of two algebras. The derived Picard group $\TrPic(A)$ of $A$, introduced independently by Rouquier-Zimmermann \cite{RZ} and Yekuteli \cite{Y}, is the group of so-called standard autoequivalences of the bounded derived category $\D^b(A)$ modulo natural isomorphisms. Equivalently, $\TrPic(A)$ can be defined as the group of isomorphism classes of two-sided tilting complexes over $A$ with respect to the tensor product. The derived Picard group is an important derived invariant of an algebra. It is closely related to the Hochschild cohomology \cite{K2}, another derived invariant. Derived Picard groups are known to be locally algebraic \cite{Y}. 

The derived Picard group of an algebra is a natural analogue of the classical Picard group in the context of derived Morita theory. However, while the classical Morita theory for finite-dimensional $\kk$-algebras is very well understood (i.e. the classical Picard group of a basic algebra is the group of its outer automorphisms), there is no general recipe known for computing derived Picard groups. So far, complete descriptions of derived Picard groups have been obtained only for several very nice classes of algebras, e.g. for commutative, for local, for finite-dimensional hereditary algebras, for derived discrete algebras, for affine Azumaya algebras, for preprojective algebras of types $A$ and $D$, and for self-injective Nakayama algebras (see \cite{RZ}, \cite{MY}, \cite{BPP}, \cite{Ne}, \cite{Miz}, \cite{VZ1}). On the other hand, the study of derived Picard groups may both contribute to the understanding of algebras in question and shed some light on the structure of their derived categories. For instance, classifying all tilting complexes in a derived category, which is a problem closely related to the problem of describing derived Picard groups, may give some insights into $t-$structures in this category.  

Self-injective algebras play a prominent role in the representation theory of algebras. It is a wide class of algebras that includes many interesting examples frequently appearing ``in nature'' (e.g. group algebras of finite groups, finite-dimensional Hopf algebras). At the same time, it is manageable enough to study, since self-injective algebras share quite a lot of non-trivial properties and structure (e.g. their stable categories are triangulated). The simplest self-injective algebras are those of finite representation type. Such algebras were classified up to derived equivalence by Asashiba \cite{A}. The derived equivalence classes correspond to triples $(\Gamma, q, t)$, where $\Gamma$ is a simply-laced Dynkin diagram, reflecting the form of the stable Auslander-Reiten quiver of an algebra, $q$ is a rational number called ``frequency'' and $t$ is a number called ``torsion'' (which is either $1, 2$ or $3$). For type $A$ and torsion $1$ the derived Picard groups of such algebras were described by Volkov and Zvonareva \cite{VZ1}, \cite{Alexandra}. 

In this paper we explicitly describe the derived Picard groups of symmetric representation-finite algebras of type $D$. In terms of the classification mentioned above, these are precisely the algebras of type $D$ with frequency $1$ and torsion $1$. We prove that the derived Picard groups of such algebras are isomorphic to extensions of certain braid groups quotients by $ \Z/2\Z \times k^* \times \Z$. More precisely, the result we obtain is as follows. Let $m \geq 4$ and let $B_{D_m}$ denote the Artin group or the generalised braid group of type $D_m$ (Definition \ref{def:braid}). By $\{\sigma_i\}_{i=1}^m$ we denote the set of standard generators of the braid group $B_{D_m}$, i.e. it is always assumed that $\sigma_i$ satisfy braid relations of type $D_m$. Let $c =\sigma_1\dots\sigma_m \in B_{D_m}$. When we write $\kappa_a$ for $a \in \kk^*$, we mean that $\kappa_a$ satisfy the same relations which hold for the corresponding elements of the multiplicative group of the field. 

\newpage

\begin{theorem*}[Theorem \ref{thm:main}]  Let $c =\sigma_1\dots\sigma_m \in B_{D_m}$.
 {\it 
 Define the following group by generators and relations 

$$ \widetilde{G}_{m} = \bigg\langle \sigma_i \in B_{D_{m}},\{\kappa_a\}_{a \in \kk^*}, \tau, s \bigg| \begin{array}{c}
\tau \sigma_i = \sigma_i \tau \ (i\neq 1,2); \ \tau \sigma_1 = \sigma_2 \tau; \  \tau^2 = e; \  \\  s \text{ and }  \kappa_a \ \forall a \in \kk^* \text{ are central}
\end{array} \bigg\rangle $$

Let $G_m = \widetilde{G}_m/ \langle c^{m-1}s^{-2m+3} \kappa_{-1} \rangle$ if $m$ is even and $G_m = \widetilde{G}_m/ \langle c^{m-1}s^{-2m+3} \kappa_{-1}\tau \rangle$ if $m$ is odd. Then the derived Picard group of any algebra of type $(D_m,1,1)$ is isomorphic to $G_m$. }
\end{theorem*}

Since all algebras of type $(D_m,1,1)$ are derived equivalent, it is sufficient to describe the derived Picard group of just one of them. However, our method in fact forces us to deal with not one, but all (up to Morita equivalence) algebras in this derived equivalence class. Two algebras $\Lambda_m$ and $\mathcal{R}_m$ (see Definitions \ref{def:Lambda} and \ref{def:R}) play a particularly important role in the proof. 

The group $G_m$ consists of three building blocks. The first block is $\langle s \rangle \cong \Z$, which corresponds to the subgroup generated by the shift functor. The second block is the braid group $\langle \sigma_1, \dots, \sigma_m \rangle \cong B_{D_m}$, which corresponds to the subgroup generated by spherical twists along a $D_m$-configuration of $0$-spherical objects. The fact that these spherical twists generate the braid group $B_{D_m}$ inside the derived Picard group follows from a non-trivial result on the faithfulness of braid group actions obtained in \cite{NV}. Although the setup of \cite{NV} is much more general, it is precisely the case of 0-spherical objects which appears to be the most involved, often requiring special attention. For one of our two ``favourite'' algebras $\mathcal{R}_m$ the collection of $0$-spherical objects in question is formed by indecomposable projective $\mathcal{R}_m$-modules. Finally, the third building block is $\langle \tau, \{\kappa_a\}_{a \in k*} \rangle \cong \Z/2\Z \times k^*$. This piece is given by the classical Picard group $\Pic(\Lambda_m)$ of our other ``favourite'' algebra $\Lambda_m$, or, equivalently, the group of outer automorphisms of $\Lambda_m$. Note that $\Pic$ is not a derived invariant, so other algebras in the same derived equivalence class may have different classical Picard groups.

When these three blocks are understood separately, two questions need to be answered to obtain the desired description of the derived Picard groups. Namely, (1) what are the relations between elements of different blocks and (2) why do they generate the whole derived Picard groups? While the first question boils down to a rather straightforward computation (Section \ref{sec:2alg}), the second requires much more work. To answer it we apply the technique used by Zvonareva for Brauer star algebras \cite{Alexandra}. The key ingredient there is the result of Aihara \cite{Aih} stating that every symmetric representation-finite algebra is tilting-connected. This allows us, although not immediately, to reduce the problem to computing all sequences of silting mutations (in the sense of Aihara and Iyama \cite{AI}) of a particular form and then expressing them as compositions of twists, shifts and outer automorphisms (Sections \ref{sec:mainres}-\ref{sec:app}). In turn, for this we first need to develop a good understanding of silting mutations for the class of algebras we are considering. We therefore start by introducing a combinatorial-geometric model for computing silting mutations in our case, generalising the classical concepts of Brauer trees and Kauer moves to suit our needs (Section \ref{sec:mutmodel}).
\subsection{Possible generalisations} One obvious way to continue this work would be to complete the description of the derived Picard groups of all symmetric representation-finite algebras by considering the remaining case of type $E$. It seems to us that similar methods can be used here, although they are likely to face more technical complications. However, it would be especially interesting to see if the result can be extended to a more general categorical context and proved without the specific machinery of the representation theory of finite-dimensional algebras. Let $\mathcal{D}$ be an enhanced $k-$linear triangulated category, whose full subcategory $\mathcal{D}^c$ of compact objects is classically generated by a collection of $0$-spherical objects, forming an ADE-configuration. Instead of the derived Picard group of an algebra, one can consider the group of enhanced autoequivalences of $\mathcal{D}$, i.e. those induced by autoequivalences of a fixed dg-enhancement, modulo natural isomorphisms. Then, in the light of our result, as well as the result of Zvonareva \cite{Alexandra} for Brauer tree algebras, it is natural to ask if this group is always generated by spherical twists along the spherical objects of the collection, modulo its identity component, automorphisms of the quiver corresponding to the spherical objects and the shift. There are several subtleties that one has to overcome in order to make this formal (e.g. defining the identity component), so we are not formulating a precise conjecture yet. Note that our result for the case of $D_m$ with even $m$ shows that there indeed might be autoequivalences induced by automorphisms of the Dynkin diagram that cannot be expressed via other elements. Naturally, one can go further and ask the same question about categories generated by collections of $n$-spherical objects for $n \neq 0$. For $n > 0$ it would be curious to see if there are geometric examples that can provide some insights. On the other hand, one can also consider non-Dynkin tree-like configurations of spherical objects. While the braid group action can be unfaithful in this situation, a priori this does not seem to be an immediate obstruction to describing a set of generators. 
\subsection{Notations and conventions}
Throughout the paper $\kk$ denotes an algebraically closed field. All algebras are assumed to be basic. Let $A$ be a $\kk-$algebra. By an $A$-module we mean a right $A$-module unless stated otherwise, and we will write $M \otimes N$ for $M \otimes_\kk N$. Given a quiver $Q$ we denote the product of arrows $\xrightarrow{\alpha}\xrightarrow{\beta}$ in its path algebra $\kk Q$ by $\alpha\beta$. By $A^{op}$ we denote the opposite algebra of $A$. For $\sigma \in \Aut(A)$ we denote by  ${}_\sigma A$ the $A$-bimodule which is regular regarded as a right module and has left multiplication twisted by $\sigma$, i.e. $a \cdot m = \sigma(a)m$ for any $m \in {}_\sigma A$, $a \in A$. By $\Cmp(A)$ we denote the category of complexes of $A$-modules, by $\K(A)$ the homotopy category of $\Cmp(A)$, and by $\K^b(\proj-A)$ the subcategory of $K(A)$ of bounded complexes of finitely generated projective modules. By $\D^b(A)$ we denote the bounded derived category of finitely generated $A$-modules. We refer to objects of $\D^b(A)$ quasi-isomorphic to bounded complexes of finitely generated projective modules as {\it perfect complexes}. If $T \in \mathcal{T}$ is an object of a triangulated category $\mathcal{T}$, we will denote by $\add(T)$ the smallest subcategory of $\mathcal{T}$ containing $T$, closed under direct summands, finite direct sums, and isomorphisms. We say that $T$ generates $\mathcal{T}$ if $\mathcal{T}$ is the smallest triangulated subcategory of $\mathcal{T}$ containing $T$ and closed direct summands and isomorphisms. In any triangulated category the shift functor will be denoted by $[1]$. When we write a chain complex it is always assumed that the leftmost term is placed in degree zero, unless another term is underlined. For instance, $X$ is in degree $0$ in $X \xrightarrow{} Y \xrightarrow{} \dots$, but in degree $-1$ in $X \xrightarrow{} \underline{Y} \xrightarrow{} \dots$. 

\subsection{Acknowledgements} 
This work is based on the master's thesis of the author completed under the supervision of Yury Volkov and Alexandra Zvonareva. I thank them both for their guidance and attention. I would particularly like to express my gratitude to Alexandra for her invaluable help with the text of the paper.  

This project has received funding from the European Research Council (ERC) under the European Union's Horizon 2020 research and innovation programme (grant agreement No 885203).  It was also supported by Ministry of Science and Higher Education of the Russian Federation, agreement \textnumero 075-15-2019-1619 and by RFBR grant 20-01-00030. As a winner of the M\"{o}bius Contest, the author was also in part supported by the M\"{o}bius Contest Foundation for Young Scientists. 

\section{Preliminaries}\label{sec:prelim} 

\subsection{Derived Morita theory and mutations of symmetric algebras}

\begin{definition} A $\kk$-algebra $A$ is {\bf self-injective} if it is injective as a right module over itself, or, equivalently, if every
projective $A$-module is injective.  A $\kk$-algebra $A$ is {\bf symmetric} if $A$ is isomorphic to $\Hom_\kk(A,\kk)$ as an $A-A$-bimodule. \end{definition}

\begin{remark}  It is easy to see that every symmetric algebra is self-injective.  \end{remark}

\begin{theorem}[Rickard, \cite{R}, \cite{R2}]\label{thm:R}
Let $A$ and $B$ be $\kk-$algebras. The following assertions are equivalent. 
\begin{enumerate}
    \item The categories $\D^b(A)$ and $\D^b(B)$ are equivalent as triangulated categories. 
    \item The categories $\K^b(\proj-A)$ and $\K^b(\proj-B)$ are equivalent as triangulated categories. 
    \item There is a complex $T \in \K^b(\proj-A)$ such that $T$ generates $\K^b(\proj-A)$, $B$ is isomorphic to $\End_{\D^b(A)}(T) $ as a $\kk-$algebra, and $\Hom_{\D^b(A)}(T,T[i]) = 0$ for $i\neq 0$. 
    \item There are a bounded complex $X$ of $B^{op} \otimes A$-modules whose restrictions to $A$ and $B^{op}$ are perfect and a bounded complex $Y$ of $A^{op} \otimes B$-modules whose restrictions to $B$ and $A^{op}$ are perfect. Moreover, $Y \otimes_B^{{\bf \Li}}X  \cong A$ in $\D^b(A^{op} \otimes A)$ and $X\otimes_A^{{\bf \Li}} Y \cong B$ in $\D^b(B^{op} \otimes B)$. 
\end{enumerate}
\end{theorem}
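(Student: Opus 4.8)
The plan is to prove the four assertions equivalent by establishing the cycle $(1)\Rightarrow(2)\Rightarrow(3)\Rightarrow(4)\Rightarrow(1)$, with essentially all of the content concentrated in the passage from a one-sided to a two-sided tilting complex. For $(1)\Rightarrow(2)$ I would use that $\K^b(\proj-A)$ sits inside $\D^b(A)$ as the subcategory of perfect complexes, and that perfectness is intrinsic to the triangulated structure of $\D^b(A)$: since $A$ is finite-dimensional over $\kk$, a complex is perfect exactly when it is a compact object of the ambient derived category, a condition expressed purely through $\Hom$-functors and distinguished triangles. Any triangulated equivalence $\D^b(A)\simeq\D^b(B)$ therefore preserves perfect complexes and restricts to the required equivalence $\K^b(\proj-A)\simeq\K^b(\proj-B)$.

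For $(2)\Rightarrow(3)$, given an equivalence $G\colon\K^b(\proj-B)\to\K^b(\proj-A)$ I would set $T=G(B)$, where $B$ is the rank-one free module placed in degree $0$. Since $B$ generates $\K^b(\proj-B)$ and $G$ is a triangulated equivalence, $T$ generates $\K^b(\proj-A)$. As bounded complexes of projectives are homotopically projective, $\Hom$ computed in $\K^b(\proj-A)$ coincides with $\Hom$ in $\D^b(A)$; hence $\Hom_{\D^b(A)}(T,T[i])\cong\Hom_{\K^b(\proj-B)}(B,B[i])$, which vanishes for $i\neq0$ and returns $\End_B(B)\cong B$ for $i=0$. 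Thus $T$ is a tilting complex with $\End_{\D^b(A)}(T)\cong B$ as $\kk$-algebras.

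The heart of the argument is $(3)\Rightarrow(4)$, where the one-sided tilting complex $T$ must be upgraded to a two-sided one. The obstacle is that an abstract triangulated equivalence need not a priori be \emph{standard}, i.e. induced by deriving a tensor product against a bimodule complex. To overcome this I would pass to a differential graded enhancement and form the dg endomorphism algebra $\mathcal{E}=\RHom_A(T,T)$; the vanishing $\Hom_{\D^b(A)}(T,T[i])=0$ for $i\neq0$ forces $\mathcal{E}$ to be quasi-isomorphic to its zeroth cohomology $B$, so $\mathcal{E}$ is formal. Regarding $T$ as an $\mathcal{E}$-$A$-bimodule object and transporting along the quasi-isomorphism $\mathcal{E}\simeq B$ produces a bounded complex $X$ of $B^{op}\otimes A$-modules whose restriction to $A$ is $T$, hence perfect, and whose restriction to $B^{op}$ is perfect. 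The generation hypothesis guarantees that $-\otimes_B^{{\bf \Li}}X$ is an equivalence $\D^b(B)\to\D^b(A)$; taking $Y=\RHom_A(X,A)$ for the derived dual and verifying the unit and counit isomorphisms then yields $Y\otimes_B^{{\bf \Li}}X\cong A$ in $\D^b(A^{op}\otimes A)$ and $X\otimes_A^{{\bf \Li}}Y\cong B$ in $\D^b(B^{op}\otimes B)$.

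Finally, $(4)\Rightarrow(1)$ is formal: deriving a tensor product against a bimodule complex is always a triangulated functor, and the isomorphisms $Y\otimes_B^{{\bf \Li}}X\cong A$ and $X\otimes_A^{{\bf \Li}}Y\cong B$ exhibit $-\otimes_B^{{\bf \Li}}X$ and $-\otimes_A^{{\bf \Li}}Y$ as mutually quasi-inverse equivalences between $\D^b(B)$ and $\D^b(A)$. As anticipated, the only delicate point is the formality-and-enhancement argument inside $(3)\Rightarrow(4)$, which is what makes the equivalence standard; the remaining implications reduce to transporting a generator and comparing $\Hom$-groups.
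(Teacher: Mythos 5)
You should first be aware that the paper contains no proof of this statement: it is quoted as background, attributed to Rickard \cite{R}, \cite{R2} (with the dg generalisation credited to Keller \cite{K}), so your proposal can only be measured against the literature. Your architecture --- the cycle $(1)\Rightarrow(2)\Rightarrow(3)\Rightarrow(4)\Rightarrow(1)$ with all the weight on $(3)\Rightarrow(4)$ --- is the right one, and your treatment of $(3)\Rightarrow(4)$ via the dg endomorphism algebra $\mathcal{E}=\RHom_A(T,T)$, quasi-isomorphic to $H^0(\mathcal{E})\cong B$ through the truncation zigzag $B \leftarrow \tau_{\leq 0}\mathcal{E} \rightarrow \mathcal{E}$, is Keller's proof rather than Rickard's original one (Rickard constructed the two-sided complex by a direct inductive lifting argument over $B^{op}\otimes A$, without dg machinery; over a field both routes work). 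Within that step you do gloss two points that deserve a line each: full faithfulness of $-\otimes^{\mathbf{L}}_B X$ does not follow from generation alone but from a d\'evissage combining the vanishing $\Hom_{\D^b(A)}(T,T[i])=0$ for $i\neq 0$ with $\End_{\D^b(A)}(T)\cong B$, and the perfection of $X$ over $B^{op}$ requires its own (standard, duality-based) argument rather than bare assertion.

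The one genuine gap is in $(1)\Rightarrow(2)$. You characterise perfect complexes as the compact objects ``of the ambient derived category''. If the ambient category is $\D^b(A)$ itself, this is not available: $\D^b(A)$ has no infinite coproducts, so compactness in the coproduct sense is undefined there, and under any naive finite reading every object would qualify, since all Hom-spaces are finite-dimensional. If instead you mean compactness in the unbounded category $\D(\mathrm{Mod}\text{-}A)$, where perfect $=$ compact is indeed true, the argument becomes circular: the hypothesis of $(1)$ is an equivalence of the bounded derived categories of finitely generated modules only, and extending it to the unbounded categories is essentially equivalent to producing the two-sided tilting complex of $(4)$ --- precisely what the theorem is meant to deliver. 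The standard repair stays inside $\D^b(A)$: a complex $X$ is perfect if and only if it is homologically finite, i.e. for every $Y\in\D^b(A)$ one has $\Hom_{\D^b(A)}(X,Y[n])=0$ for all $n\gg 0$ (with the bound allowed to depend on $Y$). This condition is visibly intrinsic to the triangulated structure, hence preserved by any triangulated equivalence, and it identifies $\K^b(\proj-A)$ inside $\D^b(A)$; substituting it for the compactness claim makes your $(1)\Rightarrow(2)$ correct, and with the two clarifications above your sketch as a whole is sound.
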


\begin{remark} It is worth remarking that the equivalence between $2)$ and $3)$ in Theorem \ref{thm:R} appeared already in the paper \cite{Bo} of Bondal, a little earlier. Although there he considered a collection of generating objects rather than one generator (tilting complex), for finite-dimensional algebras this gives the same result. \end{remark}

When $A$ and $B$ satisfy the conditions above, we say that they are \emph{derived equivalent}. In this case a complex $T$ is called a \emph{tilting complex} and complexes $X$ and $Y$ are called \emph{two-sided tilting complexes}, inverse to each other. The restrictions of two-sided tilting complexes are tilting complexes. On the other hand, if $T$ is a tilting complex with $\End_{\D^b(A)}(T) \cong B$, then there exists a two-sided tilting complex $X$ of $B^{op} \otimes A$-modules which is isomorphic to $T$ in $\D^b(A)$ when considered as a complex of $A$-modules. Moreover, such a complex $X$ is unique up to an automorphism of $B$, namely, any two-sided tilting complex whose restriction to $A$ is $T$ has the form $_\sigma B \otimes_B X$ for some $\sigma \in \Aut(B)$ (see \cite{RZ}).

If $X$ and $Y$ are two-sided tilting complexes as above, then $X \otimes_A^{{\bf \Li}} - $ and $Y \otimes_B^{{\bf \Li}} - $ are quasi-inverse equivalences between $\D^b(A)$ and $\D^b(B)$. Equivalences of this form are called {\bf standard}.

\begin{definition}[Rouquier, Zimmermann \cite{RZ}, Yekutieli \cite{Y}] The {\bf derived Picard group} $\TrPic(A)$ of a $\kk-$algebra $A$ is the group of isomophism classes of two-sided tilting complexes of $A^{op} \otimes A$-modules, where the product of classes of $X$ and $Y$ is given by the class of $X \otimes_A Y$. In other words, $\TrPic(A)$ is the group of standard autoequivalences of $\D^b(A)$ modulo natural isomorphisms. 
\end{definition}

A complex $S \in \K^b(\proj-A)$ is called {\bf silting} if $\Hom_{D^b(A)}(S, S[i]) = 0$ for $i>0$ and $S$ generates $\K^b(\proj-A)$ as a triangulated category.  Let $S$ be a basic silting complex, meaning that it is isomorphic to a direct sum of pairwise non-isomorphic indecomposable complexes in $\K^b(\proj-A)$. Let $S = M \oplus X$. A {\bf minimal left $\add(M)$-approximation} of $X$ is a morphism $X \xrightarrow{f} M'$, where $M'$ is an object of $\add(M)$ such that $\Hom(M',M'') \xrightarrow{f^*} \Hom(X,M'')$ is surjective for every $M'' \in \add(M)$ and any morphism $M' \xrightarrow{g} M'$ with $gf = f$ is an isomorphism. Now let $Y$ be the cone of $f$ in $\K^b(\proj-A)$. Note that $Y$ is defined uniquely up to an isomorphism. Define the {\bf left mutation of $S$ with respect to $X$} to be the object $\mu_X^+(S) = M \oplus Y$. Then $\mu_X^+(S)$ is again a basic silting complex (see \cite{AI} for this and other details regarding silting mutations). Dually, one can define the right mutations of $S$ with respect to $X$, which is denoted by $\mu_X^-(S)$. One can show that $\mu_Y^-(\mu_X^+(S)) = S$. A mutation with respect to an indecomposable object is called {\bf irreducible}. Note that if $A$ is a symmetric algebra, then every silting complex is automatically a tilting complex, because $\Hom_{\D^b(A)}(S,S[i]) = D\Hom_{\D^b(A)}(S[i],S)$, where $D$ denotes the $\kk-$dual.

\subsection{Brauer trees and Brauer tree algebras}

Now we are going to briefly recall the concepts of Brauer trees and Brauer tree algebras. Although in this paper we mainly deal with a different class of algebras, the theory of Brauer tree algebras is crucial for understanding some of our constructions and seeing the intuition behind them. We must warn the reader that what we define below as a Brauer tree is classically referred to as a Brauer tree with exceptional vertex of multiplicity $1$ (or simply without an exceptional vertex). But since in the paper only this special case is used, we drop all mentions of exceptional vertices and simply write ``Brauer trees'' and ``Brauer tree algebras''. 

\begin{definition}
 A {\bf Brauer tree} $G$ is a finite tree with a fixed cyclic ordering of half-edges incident to every vertex. It is customary to fix an embedding of $G$ into the plane and assume that the ordering of half-edges is given by the clockwise orientation of the plane. 
\end{definition}

\begin{definition} A {\bf Brauer tree algebra} of the Brauer tree $G$ is an algebra isomorphic to $A_G = \kk Q_G/I_G$ where the quiver $Q_G$ and the ideal of relations $I_G$ are defined in the following way. The vertices of $Q_G$ are in one to one correspondence with the edges of $G$, and we will denote them by the same letters. Now let $a$ be a vertex of $G$ and let $x$ be an edge of $G$ incident to $a$. If $y$ is an edge of $G$ immediately following $x$ in the cyclic ordering ``around'' their common vertex $a$, then there is an arrow $(x \to y)$ in $Q_G$. One can see that in this way vertices of $G$ correspond to cycles in $Q_G$. Note that if $a$ is a leaf, we automatically have $x = y$ and the resulting arrow in $Q_G$ is a loop. We refer to these loops as {\bf formal} and do not draw them when depicting the quiver. It is easy to see that the arrows of $Q_G$ split into a union of cycles, where every vertex belongs to exactly two cycles. The two-sided ideal $I_Q$ is generated by the following relations: 

\begin{enumerate}
    \item The two cycles starting at any fixed vertex are equal. 
    \item Any product of two arrows not belonging to the same cycle is zero. 
\end{enumerate}
\end{definition}
\begin{remark} The ideal $I_G$ defined above is not admissible, but in our context this presentation is the most convenient. 
\end{remark}

Brauer tree algebras are always symmetric. A lot, not to say everything, is known about mutations of tilting complexes over Brauer tree algebras. More precisely, let $A = \bigoplus_{i=1}^n P_i = \bigoplus_{i=1}^n e_i A$ be a Brauer tree algebra considered as the direct sum of its indecomposable projective modules. Let $G$ be the Brauer tree of $A$. For any $j = 1, \dots, n$ it is easy to compute the tilting complex $\mu_{j}^{\pm}(A) := \mu_{P_j}^{\pm}(A) = \bigoplus_{i \neq j} P_i \oplus P_j'$. Moreover, $\End_{D^b(A)}(\mu_{j}^{\pm}(A))$ is again a Brauer tree algebra. Its Bauer tree $G'$ can be described explicitly. Namely, $G'$ differs from $G$ only in the edge corresponding to the vertex $j$ of the quiver of $A$. The transformation changing the position of this one edge is usually referred to as a {\bf Kauer move} or a {\bf flip}. For further details we refer the reader to, for instance, \cite{Kauer}, \cite{Aih2}, \cite{Aih3}. 

\subsection{Symmetric representation-finite algebras}

\begin{theorem}[Aihara, \cite{Aih}]\label{thm:tiltconn} Let $A$ be a symmetric representation-finite algebra. Then any tilting complex $T \in \K^b(\proj-A)$ concentrated in non-positive degrees can be obtained via a sequence of irreducible left mutations from $A$ regarded as a tilting complex over itself. 
\end{theorem}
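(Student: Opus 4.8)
The plan is to organize all tilting complexes of $A$ into a partially ordered set and to realize the passage from $A$ to $T$ as a descent along covering relations. For silting complexes $S,S'$ set $S \geq S'$ whenever $\Hom_{\D^b(A)}(S, S'[i]) = 0$ for all $i>0$; by \cite{AI} this is a partial order on the isomorphism classes of basic silting complexes. If $T$ is concentrated in non-positive degrees then $\Hom_{\D^b(A)}(A, T[i]) = H^i(T) = 0$ for $i>0$, so $A \geq T$, and hence $A$ is the maximum of the interval $[T,A] = \{S : T \leq S \leq A\}$. The crucial inputs from \cite{AI} are that an irreducible left mutation $\mu_X^+(S)$ always satisfies $S > \mu_X^+(S)$ with no silting complex strictly between them, so that covering relations in the poset are precisely irreducible left mutations, and that whenever $[T,A]$ contains only finitely many silting complexes, $T$ is obtained from $A$ by a finite chain of such mutations. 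I would thus reduce the theorem to the finiteness of the interval $[T,A]$.

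The engine for this finiteness is that $A$ is \emph{silting-discrete}, in the sense that every interval of the form $[S[1],S]$ is finite. To establish this, fix a silting complex $S$ and put $B = \End_{\D^b(A)}(S)$; by Rickard's Theorem \ref{thm:R} the algebra $B$ is derived equivalent to $A$, and since both self-injectivity and symmetry are derived invariants, $B$ is again symmetric. Representation-finiteness of a self-injective algebra is also preserved under derived (indeed stable) equivalence, so $B$ is representation-finite. Under the induced equivalence $\D^b(A) \simeq \D^b(B)$ the interval $[S[1],S]$ is identified with the set of two-term silting complexes over $B$, which by the theorem of Adachi, Iyama and Reiten is in bijection with the support $\tau$-tilting $B$-modules; as $B$ has only finitely many indecomposable modules, this set is finite.

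It then remains to promote finiteness of two-term intervals to finiteness of $[T,A]$. Since $T$ is a bounded perfect complex, there is an integer $n$ with $A \geq T \geq A[n]$, so it suffices to show that each interval $[A[n],A]$ is finite, and I would argue by induction on $n$. The case $n=1$ is exactly the two-term interval $[A[1],A]$, finite by silting-discreteness. For the inductive step, the aim is to show that every silting complex $S$ with $A \geq S \geq A[n]$ lies in a two-term interval $[S'[1],S']$ for some $S'$ with $A \geq S' \geq A[n-1]$; combining the induction hypothesis (finitely many choices of $S'$) with silting-discreteness applied to each such $S'$ (finitely many $S$ in each window $[S'[1],S']$) would show that $[A[n],A]$ is finite. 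Feeding this back into the reachability statement of \cite{AI} recorded above yields the desired chain of irreducible left mutations from $A$ to $T$.

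The main obstacle is precisely this inductive step: making rigorous the claim that every $S \in [A[n],A]$ is captured by a single two-term window over a derived-equivalent algebra with $S'$ one cohomological layer closer to $A$. This is where the silting-reduction technique of Iyama and Yang is needed, to strip off the top layer compatibly with the silting order while keeping the endomorphism algebras arising at each stage representation-finite and symmetric, so that silting-discreteness can be reapplied. Controlling that this layering is exhaustive and that the bookkeeping terminates — rather than the formal reachability-from-finite-intervals input, which is quoted from \cite{AI} — is the genuinely delicate part of the argument.
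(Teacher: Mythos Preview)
The paper does not prove this theorem; it is quoted without argument as a result of Aihara \cite{Aih} and used as a black box throughout. So there is no proof in the paper to compare your proposal against.

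As a reconstruction of the argument behind the cited reference, your outline is on the right track: the partial order on silting complexes from \cite{AI}, the identification of irreducible left mutations with covering relations, and the reduction to finiteness of the interval $[T,A]$ are exactly the ingredients Aihara uses. Your honesty about the inductive step being the delicate point is appropriate. Two remarks. First, the invocation of Adachi--Iyama--Reiten $\tau$-tilting theory is anachronistic relative to \cite{Aih}; Aihara's original argument counts two-term tilting complexes more directly, using that a representation-finite algebra has only finitely many torsion classes (hence finitely many two-term tilting complexes), without passing through support $\tau$-tilting modules. Second, the inductive step you flag does not actually require Iyama--Yang silting reduction in the form you suggest: Aihara's argument proceeds by showing that if $S \in [A[n],A]$ and $S \neq A$, then some irreducible left mutation of $A$ still lies above $S$, so one can descend one covering relation at a time and stay inside the interval; finiteness of each two-term window $[P[1],P]$ then guarantees termination. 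This is simpler than stripping cohomological layers via silting reduction.
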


Continuing the work of Riedtmann \cite{Ried2}, Asashiba \cite{A} classified self-injective algebras of finite representation type up to derived equivalence. Riedtmann established that the stable AR quiver of such an algebra has the form $\Z \Gamma/G$, where $\Gamma$ is a simply-laced Dynkin diagram and $G$ is a cyclic group generated by an element $\tau^{ql_{\Gamma}}\varphi$. Here $\tau$ is the AR translation, $l_\Gamma$ is the Loewy lengh of the mesh category of $\Z\Gamma$, $q$ is some rational number called {\it frequency} and $\varphi$ is an automorphism of the diagram $\Gamma$. If $\varphi$ is an automorphism of order $r \in \N$, we say that in this case a representation-finite self-injective algebra is of type $(\Gamma, q, r)$. Asashiba found the complete list of possible types and proved that derived equivalence classes of representation-finite self-injective algebras are, with one minor exception, classified by such triples. More precisely, there are only algebras of types $(A_m, \frac{k}{m}, 1)$, $(A_{2m+1}, k, 2)$, $(D_m, \frac{k}{gcd(m,3)}, 1)$, $(D_m, k, 2) $, $(D_4, k, 3)$, $(E_6, k, 2)$, and $(E_d, k, 1)$, for ${k,m \in \N, d = 6,7,8}$. Moreover, algebras of the same type are derived equivalent, except for the case $(D_{3m}, \frac{1}{3}, 1)$ and $\charr \kk = 2$, where there are two derived equivalence classes.

In this paper we are interested in symmetric representation-finite algebras of type $D$. In terms of the classification we discussed above these are algebras of types $(D_m, 1, 1)$, $m \in \N$.  We denote the class of all algebras of type $(D_m, 1, 1)$ by $\mathfrak{C}(m)$. Equivalently, $\mathfrak{C}(m)$ can be defined as the class of all algebras derived equivalent to the trivial extension algebra of the $D_m$ Dynkin diagram with alternating orientation (see Definition \ref{def:R}). Let us point out that Brauer tree algebras are derived equivalent to the trivial extension algebra of the $A_m$ Dynkin diagram with alternating orientation.

\begin{definition}\label{def:braid} The {\bf Artin group} or the {\bf generalised braid group} $B_{D_m}$ of type $D_m$ is a group given by the following generators and relations

\[\begin{tikzcd}
	&&&&& {\small{1}} \\
	{\small{m}} & {\small{m-1}} && {\small{4}} & {\small{3}} && { B_{D_n} = \bigg\langle \{\sigma_i\}_1^m \bigg| \begin{array}{c} \sigma_i \sigma_{j} \sigma_i = \sigma_{j} \sigma_i \sigma_{j} \text{ if } \exists \text{ an edge } (i,j) \\ \  \sigma_i\sigma_j = \sigma_j\sigma_i \ \text{ if } \not\exists \text{ an edge } (i,j) \end{array} \bigg\rangle  } & {} \\
	&&&&& {\small{2}}
	\arrow[no head, from=2-1, to=2-2]
	\arrow[no head, from=2-5, to=1-6]
	\arrow[no head, from=2-5, to=3-6]
	\arrow[no head, from=2-4, to=2-5]
	\arrow[dashed, no head, from=2-2, to=2-4]
\end{tikzcd}\]

%$$ B_{D_n} = \bigg\langle \{\sigma_i\}_1^n \bigg| \begin{array}{c} \sigma_i \sigma_{i+1} \sigma_i = \sigma_{i+1} \sigma_i \sigma_{i+1} \ (3 \leq i \leq n) \\ \sigma_1\sigma_3\sigma_1 = \sigma_3\sigma_1\sigma_3, \  \sigma_2\sigma_3\sigma_2 = \sigma_3\sigma_2\sigma_3 \\ \sigma_1\sigma_2 = \sigma_2\sigma_1, \ \sigma_i\sigma_j = \sigma_j\sigma_i \ (|i-j| \geq 2) \end{array} \bigg\rangle  $$
 
 In other words, generators of $B_{D_n}$ correspond to vertices of the $D_n$ Dynkin diagram, every two generators corresponding to non-adjacent vertices commute, and every two generators corresponding to adjacent vertices satisfy the braid relation. 
\end{definition}

\section{Modified Brauer trees and mutations for type \texorpdfstring{$D$}{Lg}}\label{sec:mutmodel}
The aim of this section is to introduce a geometric model for tilting mutations of symmetric representation-finite algebras of type $D$, in a way generalising and adapting the classical concepts of Brauer trees and Kauer moves. This construction will be our key tool in the proof of the main result of the paper.  

Let $m \geq 4$ be an integer. Recall that by $\mathfrak{C}(m)$ we denote the class of all algebras derived equivalent to the trivial extension algebra of the $D_m$ Dynkin diagram with alternating orientation (see Definition \ref{def:R}). Equivalently, $\mathfrak{C}(m)$ consists of all algebras of type $(D_m, 1, 1)$ in the sense of Asashiba \cite{A}. 

\subsection{Algebras}

Let us begin by showing how to conveniently depict the algebras in $\mathfrak{C}(m)$ themselves, before we discuss mutations. Namely, we are going to define combinatorial ribbon graph-like objects corresponding to symmetric representation-finite algebras of type $D$ in a similar way as Brauer trees correspond to Brauer tree algebras. The newly introduced objects will be referred to as {\bf modified Brauer trees}. The class $\mathfrak{C}(m)$ splits into two subclasses which we are going to describe below and refer to as {\bf Double Edge} algebras and {\bf Triple Tree} algebras. These two subclasses correspond to two different shapes of modified Brauer trees. Such a classification is not new, for instance, it can be derived from the works of Riedtmann and Asashiba (see \cite{Ried}, \cite{A}). However, we will naturally rediscover it along the way. Namely, using the language of modified Brauer trees we will describe (up to Morita equivalence) all algebras that can be reached by tilting mutations from one particular algebra $\Lambda \in \mathfrak{C}(m)$. Then the fact that symmetric representation-finite algebras are tilting-connected (Theorem \ref{thm:tiltconn}) automatically gives us a classification of all algebras in $\mathfrak{C}(m)$. 

First we define {\bf Double Edge algebras}. Up to Morita equivalence they can all be obtained by the following procedure. Let $G$ be a Brauer tree with $m-2$ edges and let $Q_G$ be the quiver of the Brauer tree algebra $A_G$ corresponding to $G$. Fix an arrow ${(\gamma \colon x_1 \to x_2) \in (Q_G)_1}$, where $x_1$ and $x_2$ are two not necessarily distinct vertices of $Q_G$, i.e. we allow $\gamma$ to be a formal loop. Let $(x_1 \xrightarrow{\gamma} x_2 \xrightarrow{\gamma_2} x_3 \dots x_n \xrightarrow{\gamma_n} x_1)$ be the unique cycle $\gamma$ belongs to. We define a new quiver $Q_{G^\gamma}$ by putting $$(Q_{G^\gamma})_0 = (Q_G)_0 \sqcup \{a,b\}$$ $$(Q_{G^\gamma})_1 = \bigg((Q_{G^\gamma})_1 \setminus \{\gamma\} \bigg) \sqcup \{(x_1 \xrightarrow{\alpha_1} a), (a \xrightarrow{\alpha_2 } x_2), (x_1 \xrightarrow{\beta_1} b),  (b \xrightarrow{\beta_2} x_2), (a \xrightarrow{\alpha} a), (b \xrightarrow{\beta} b) \}$$

Let $I_{G^\gamma}$ be the ideal of $kQ_{G^\gamma}$ generated by $\alpha_1\alpha_2 - \beta_1\beta_2$, $\alpha_2 \gamma_2 \dots \gamma_n\beta_1$, $\beta_2\gamma_2\dots\gamma_n\alpha_1$, $\alpha - \alpha_2\gamma_2 \dots \gamma_n \alpha_1$, $\beta - \beta_2\gamma_2\dots \gamma_2\beta_1$ and all generators of $I_G$ in which $\gamma$ is replaced by $\alpha_1\alpha_2$.

\begin{figure}[H]
\[\begin{tikzcd}
	& \cdot && \cdot && {} && \cdot & \textcolor{rgb,255:red,25;green,0;blue,250}{a} & \cdot \\
	\cdot & {x_1} && {x_2} & \cdot && \cdot & {x_1} && {x_2} & \cdot \\
	{x_n} &&&& {x_3} && {x_n} && \textcolor{rgb,255:red,42;green,141;blue,27}{b} & {} & {x_3} \\
	\cdot &&&& \cdot && \cdot &&&& \cdot \\
	\\
	&& {{\huge Q_G}} && {} &&& {} & {{\huge Q_{G^\gamma}}}
	\arrow["\gamma"{description}, curve={height=-6pt}, from=2-2, to=2-4]
	\arrow["{\gamma_2}"{description}, curve={height=-6pt}, from=2-4, to=3-5]
	\arrow["{\gamma_n}"{description}, curve={height=-6pt}, from=3-1, to=2-2]
	\arrow[curve={height=-6pt}, from=4-1, to=3-1]
	\arrow[curve={height=-6pt}, from=3-5, to=4-5]
	\arrow[curve={height=-18pt}, dashed, no head, from=4-5, to=4-1]
	\arrow[shift left=1, curve={height=6pt}, from=2-4, to=2-5]
	\arrow[curve={height=6pt}, from=1-4, to=2-4]
	\arrow[curve={height=18pt}, dashed, no head, from=2-5, to=1-4]
	\arrow[shift right=1, curve={height=-6pt}, from=2-2, to=2-1]
	\arrow[curve={height=-6pt}, from=1-2, to=2-2]
	\arrow[curve={height=18pt}, dashed, no head, from=1-2, to=2-1]
	\arrow["{\gamma_n}"{description}, curve={height=-6pt}, from=3-7, to=2-8]
	\arrow["{\beta_1}"', color={rgb,255:red,42;green,141;blue,27}, from=2-8, to=3-9]
	\arrow["{\alpha_1}"{pos=0.8}, color={rgb,255:red,25;green,0;blue,250}, from=2-8, to=1-9]
	\arrow["{\alpha_2}"{pos=0.3}, color={rgb,255:red,25;green,0;blue,250}, from=1-9, to=2-10]
	\arrow["{\beta_2}"', color={rgb,255:red,42;green,141;blue,27}, from=3-9, to=2-10]
	\arrow["{\gamma_2}"{description}, curve={height=-6pt}, from=2-10, to=3-11]
	\arrow[curve={height=6pt}, from=1-10, to=2-10]
	\arrow[curve={height=-6pt}, from=4-7, to=3-7]
	\arrow[curve={height=-6pt}, from=1-8, to=2-8]
	\arrow[shift right=1, curve={height=-6pt}, from=2-8, to=2-7]
	\arrow[curve={height=-18pt}, dashed, no head, from=2-7, to=1-8]
	\arrow[shift left=1, curve={height=6pt}, from=2-10, to=2-11]
	\arrow[curve={height=-18pt}, dashed, no head, from=1-10, to=2-11]
	\arrow[curve={height=-6pt}, from=3-11, to=4-11]
	\arrow[curve={height=-12pt}, dashed, no head, from=4-11, to=4-7]
\end{tikzcd}\]
\caption{Obtaining $Q_{G^\gamma}$ from $Q_G$.}
\centering
\end{figure}

\begin{definition} We say that a finite-dimensional $k$-algebra $A$ is of the {\bf Double Edge type} if it is Morita equivalent to $A_{G^\gamma} = kQ_{G^\gamma}/I_{G^\gamma}$ described above, for some Brauer tree $G$ and some arrow $\gamma$ of the corresponding quiver $Q_G$. \end{definition} 

Now let us turn to the ``Brauer tree side''. We are going to describe the {\bf modified Brauer tree} $G^\gamma$ of the algebra $A_{G^\gamma}$. It is, strictly speaking, no longer a tree, but rather a tree with a bit of an extra structure. $G^\gamma$ is obtained by a slight modification of the Brauer tree $G$, hence the name. More precisely, let $G^\gamma$ be a tree with a cyclic ordering of half-edges obtained from $G$ in the following way. Let $\mathscr{X}$ be the vertex of $G$ corresponding to the only cycle of $Q_G$ containing $\gamma$. Let $(G^\gamma)_0 = G_0 \sqcup \{\mathscr{X}'\}$, $(G^\gamma)_1 = G^1 \sqcup \{(\mathscr{X},\mathscr{X}')\}$.
We label the new edge connecting $\mathscr{X}$ and $\mathscr{X}'$ by two symbols $a, b$ and refer to it as {\bf double}. The cyclic ordering of the half-edges incident to each vertex of $G^\gamma$ is the same as in $G$ with the exception of the new double edge labeled $(a,b)$, which is placed between edges $x_1$ and $x_2$ in the cyclic ordering around $\mathscr{X}$. As it is customary in the case of Brauer graphs, we fix a planar representation of $G^\gamma$ in such a way that the cyclic ordering of the half-edges is in agreement with the clockwise ordering. We remark that the double edge can be interpreted as two regular edges $a$ and $b$ positioned one on top of the other, on two parallel 'sheets' of the picture. It is then natural to think of the vertex $\mathscr{X}$ as having not one, but two cyclic orderings on regular edges incident to it, one on edges $x_1, a, x_2, \dots, x_n$ and the other on edges $x_1, b, x_2, \dots, x_n$. Although these two orderings are the same modulo replacing $a$ with $b$ and vice versa, it might be more convenient to consider them as two independent pieces of data, especially for the purpose of carrying as much of the Brauer graphs language to our constructions as possible.

\begin{figure}[H]
\includegraphics[scale=1.1]{pic1.pdf}
\caption{Constructing the modified Brauer tree $G^\gamma$ of $A_{G^\gamma}$ (on the right) from $G$ (on the left). }
\centering
\end{figure}

As it is clear from the construction, any tree with a cyclic ordering of half-edges around its vertices and a distinguished (double) pendant edge gives rise to an algebra of the Double Edge tree type. In particular, one can consider algebras whose modified Brauer tree is a star with $m-2$ regular edges and a double edge or a line consisting of $m-2$ regular edges and a double edge at the end. These two examples play a central role in this paper, see Definitions \ref{def:Lambda} and \ref{def:R} for details.

Now we introduce algebras of the {\bf Triple Tree} type. Let $G^i$, $i = 1,2,3$ be three Brauer trees and let $A^i = kQ^i/I^i$ be the corresponding Brauer tree algebras. Denote by $m_i$ the number of edges $|G^i_1|$ in the corresponding tree $G^i$ and assume additionally that $m_1 + m_2 + m_3 = m-3$.
 Since $m \geq 4$, here we assume that at least one of the trees $G^1, G^2, G^3$ has at least one edge.  Below we first consider the case when each of the trees has at least one edge and then remark on how to adapt the construction to the case when one or two of the trees consist of just one vertex.

Pick an arrow in each of the quivers $Q^i$, $(x_1 \xrightarrow{\alpha} x_2) \in Q^1_1, (y_1 \xrightarrow{\beta} y_2) \in Q^2_1, (z_1 \xrightarrow{\gamma} z_2) \in Q^3_1$. Let ${(x_1 \xrightarrow{\alpha} x_2 \xrightarrow{\alpha_2}  \dots \xrightarrow{} x_n \xrightarrow{\alpha_n} x_1)}$, ${(y_1 \xrightarrow{\beta} y_2 \xrightarrow{\beta_2} \dots \xrightarrow{} y_n \xrightarrow{\beta_m} y_1)}$ and  ${(z_1 \xrightarrow{\gamma} z_2 \xrightarrow{\gamma_2} \dots \xrightarrow{} x_n \xrightarrow{\gamma_k} z_1)}$ be the cycles $\alpha, \beta$ and $\gamma$ belong to respectively. As in the previous case, we allow $\alpha, \beta$ and $\gamma$ to be formal loops. We construct a new quiver $Q' = Q'(G_1, G_2, G_3, \alpha, \beta, \gamma)$ in the following way. Let 
$$ Q'_0 = Q^1_0 \sqcup Q^2_0 \sqcup Q^3_0 \sqcup \{a, b, c\}$$
\begin{multline*} Q'_1 =( Q^1_1 \sqcup Q^2_1 \sqcup Q^3_1  \setminus \{\alpha, \beta, \gamma\}) \sqcup \{ (b \xrightarrow{\alpha'} c), (c \xrightarrow{\beta'} a), (a \xrightarrow{\gamma'} b), \\ (x_1 \xrightarrow{\alpha_1'} b), (c \xrightarrow{\alpha_1''} x_2), (y_1\xrightarrow{\beta_1'} c), (a \xrightarrow{\beta_1''} y_2), (z_1 \xrightarrow{\gamma_1'} a), (b \xrightarrow{\gamma_1''} z_2)  \} 
\end{multline*}

Now let $I'$ be the ideal of $kQ'$ generated by: 
\begin{enumerate}
    \item Generators of $I^1$ in which $\alpha$ is replaced with $\alpha_1' \alpha' \alpha_1''$.
    \item Generators of $I^2$ in which $\beta$ is replaced with $\beta_1' \beta' \beta_1''$.
    \item Generators of $I^3$ in which $\gamma$ is replaced with $\gamma_1' \gamma' \gamma_1''$. 
    \item The elements $\alpha'\beta' -  \gamma_1' \gamma_k \dots \gamma_2 \gamma_1''$, $\beta'\gamma' - \alpha_1''\alpha_2 \dots \alpha_n \alpha_1'$, $\gamma'\alpha' - \beta_1''\beta_2\dots \beta_n \beta_1'$ (two paths between any two of the new vertices $a,b,c$ are equal). 
    \item The elements $\alpha_1'\gamma_1''$, $\beta_1'\alpha_1''$, $\gamma_1'\beta_1''$ (there are no non-zero paths going from one of the three Brauer trees to the other).
\end{enumerate}

\begin{figure}[H]\begin{small}
\[\begin{tikzcd}
	&& {{\large z_3}} &&&& {{\large x_n}} \\
	&&& {{\large z_2}} && {{\large x_1}} \\
	{{\large z_k}} &&&& \textcolor{rgb,255:red,240;green,69;blue,66}{{\huge b}} &&&& {{\large x_3}} \\
	& {{\large z_1}} &&&&&& {{\large x_2}} \\
	&& \textcolor{rgb,255:red,240;green,69;blue,66}{{\huge a}} &&&& \textcolor{rgb,255:red,240;green,69;blue,66}{{\huge c}} \\
	&& {{\large y_2}} &&&& {{\large y_1}} \\
	&& {{\large y_3}} &&&& {{\large y_m}}
	\arrow["{{\huge \gamma'}}"{description}, color={rgb,255:red,240;green,69;blue,66}, from=5-3, to=3-5]
	\arrow["{{\huge \beta'}}"{description}, color={rgb,255:red,240;green,69;blue,66}, from=5-7, to=5-3]
	\arrow["{{\huge \alpha'}}"{description}, color={rgb,255:red,240;green,69;blue,66}, from=3-5, to=5-7]
	\arrow["{{\huge \alpha_1'}}", color={rgb,255:red,240;green,69;blue,66}, from=2-6, to=3-5]
	\arrow["{{\huge \alpha_1''}}", color={rgb,255:red,240;green,69;blue,66}, from=5-7, to=4-8]
	\arrow[dashed,"{{\large \alpha}}", "\shortmid"{marking, text={rgb,255:red,214;green,153;blue,92}}, color={rgb,255:red,214;green,153;blue,92}, from=2-6, to=4-8]
	\arrow["{{\large \alpha_n}}"', from=1-7, to=2-6]
	\arrow["{{\large \alpha_2}}"', from=4-8, to=3-9]
	\arrow[curve={height=30pt}, dashed, from=3-9, to=1-7]
	\arrow["{{\huge \gamma_1''}}"', color={rgb,255:red,240;green,69;blue,66}, from=3-5, to=2-4]
	\arrow["{{\huge \gamma_1'}}"', color={rgb,255:red,240;green,69;blue,66}, from=4-2, to=5-3]
	\arrow[dashed,"{{\large \gamma}}", "\shortmid"{marking, text={rgb,255:red,214;green,153;blue,92}}, color={rgb,255:red,214;green,153;blue,92}, from=4-2, to=2-4]
	\arrow["{{\large \gamma_2}}", from=2-4, to=1-3]
	\arrow["{{\large \gamma_k}}", from=3-1, to=4-2]
	\arrow[curve={height=30pt}, dashed, from=1-3, to=3-1]
	\arrow["{{\huge \beta_1''}}", color={rgb,255:red,240;green,69;blue,66}, from=5-3, to=6-3]
	\arrow["{{\large \beta_2}}", from=6-3, to=7-3]
	\arrow["{{\huge \beta_1'}}", color={rgb,255:red,240;green,69;blue,66}, from=6-7, to=5-7]
	\arrow["{\beta_m}", from=7-7, to=6-7]
	\arrow[dashed,"{{\large \beta}}", "\shortmid"{marking, text={rgb,255:red,214;green,153;blue,92}}, color={rgb,255:red,214;green,153;blue,92}, from=6-7, to=6-3]
	\arrow[curve={height=30pt}, dashed, from=7-3, to=7-7]
\end{tikzcd}\]
\caption{Construction of the quiver $Q'$ from the three quivers $Q^1, Q^2$ and $Q^3$. }
\centering
\end{small}
\end{figure}

\begin{remark} Now assume that, for instance, $G^3_1 = \varnothing$, $G_1^1, G_2^1 \neq \varnothing$. Then we again put $$Q'_0 = Q^1_0 \sqcup Q^2_0 \sqcup Q^3_0 \sqcup \{a, b, c\}$$ $$Q'_1 =( Q^1_1 \sqcup Q^2_1 \sqcup Q^3_1  \setminus \{\alpha, \beta\}) \sqcup \{ (b \xrightarrow{\alpha'} c), (c \xrightarrow{\beta'} a), (a \xrightarrow{\gamma'} b), (x_1 \xrightarrow{\alpha_1'} b), (c \xrightarrow{\alpha_1''} x_2), (y_1\xrightarrow{\beta_1'} c), (a \xrightarrow{\beta_1''} y_2))  \}.$$ 

And the ideal $I'$ is now generated by (1) generators of $I^1$ in which $\alpha$ is replaced with $\alpha_1' \alpha' \alpha_1''$, (2) generators of $I^2$ in which $\beta$ is replaced with $\beta_1' \beta' \beta_1''$, (3) the elements $\beta'\gamma' - \alpha_1''\alpha_2 \dots \alpha_n \alpha_1'$, $\gamma'\alpha' - \beta_1''\beta_2\dots \beta_n \beta_1'$, (4) the element  $\beta_1'\alpha_1''$.  \end{remark}

\begin{definition} We say that a finite-dimensional $k$-algebra $A$ is of the {\bf Triple Tree} type if it is Morita equivalent to $A_{G'} = kQ'/I'$ described above, for some Brauer trees $G^1, G^2$ and $G^3$ and some arrows $\alpha, \beta$ and $\gamma$ in $Q^1, Q^2$ and $Q^3$ respectively.  \end{definition}

\begin{remark} Note that the order of the trees $G^1,G^2$ and $G^3$ for a Triple Tree algebra is determined only up to a cyclic permutation.  \end{remark}

Now we describe the modified Brauer tree of $A_{G'}$. By $\mathscr{X}, \mathscr{Y}$ and $\mathscr{Z}$ respectively we denote the vertices of $G^1, G^2$ and $G^3$ respectively corresponding to the cycles containting $\alpha, \beta$, and $\gamma$. As the reader might have already guessed, this time we are going to, vaguely speaking, glue the three Brauer trees $G^1, G^2$ and $G^3$ together, same as the corresponding quivers. More precisely, let $G'$ be the following unoriented graph: 
$$G'_0 = G^1_0 \sqcup G^2_0 \sqcup G^3_0$$
$$G_1' = G^1_1 \sqcup G^2_1 \sqcup G^3_1 \sqcup \{(\mathscr{Z}, \mathscr{Y}), (\mathscr{X}, \mathscr{Z}), (\mathscr{Y}, \mathscr{X})\}$$

We label the new edges $(\mathscr{Z}, \mathscr{Y}), (\mathscr{X}, \mathscr{Z}), (\mathscr{Y}, \mathscr{X})$ by $a$, $b$ and $c$ respectively. We fix a cyclic ordering of half-edges indecent to each vertex of $G$ in such a way that it corresponds to the ordering of the arrows in the cycles of $Q'$. Namely, for all vertices except for $\mathscr{X}, \mathscr{Y}$ and $\mathscr{Z}$, it is the same as in the corresponding Brauer trees $G^1$, $G^2$ and $G^3$. For $\mathscr{X}$ it is $x_1, b, c, x_2, \dots, x_n, x_1$, for $\mathscr{Y}$ it is $y_1, c, a, y_2, \dots, y_m$  and, finally, for $\mathscr{Z}$ it is $z_1, a, b, z_2, \dots, z_k, z_1$. Again we can fix an embedding of $G'$ into the plane in such a way that this ordering is given by the clockwise orientation. We will say that edges $a, b$ and $c$ form {\bf the central triangle} and refer to edges $a,b$ and $c$ themselves as {\bf central edges} (see the left part of Figure \ref{fig:3trees} below). 

The graph $G'$ together with the cyclic ordering of half-edges already carries enough information to reconstruct the quiver $Q'$ and the algebra $A_{Q'}$. On the other hand, one can see that there is a cycle in the quiver $Q'$ that does not correspond to any vertex of the graph $G'$, as it would be the case for Brauer graphs. One way to approach this inconvenience is to consider $G'$ as a 2-dimensional simplicial complex, rather than a graph, i.e. a 1-dimension simplicial complex. More precisely, replace the edges (one-dimensional simplicies) $a$, $b$, and $c$ by {\it two-dimensional} simplicies connecting their ends $\mathscr{X}, \mathscr{Y}, \mathscr{Z}$ to an imaginary vertex in the middle of the cycle formed by them, as shown in Figure \ref{fig:3trees} below on the right. Note that the cyclic ordering of 2-edges incident to the new imaginary vertex should be the opposite to the ordering induced by the clockwise orientation of the plane. 

\begin{figure}[H]
\includegraphics[scale=1.3]{pic3trees.pdf}
\caption{The modified Brauer tree $G'$ of $A_{Q'}$ (on the left) and an alternative way to depict it using 2-cells (on the right). }
\label{fig:3trees}
\centering
\end{figure}

%\begin{figure}[H]
%\caption{2-edges $a, b, c$ form the central triangle.}
%\includegraphics[scale=0.14]{5}
%\centering
%\end{figure}

\subsection{Mutations}

The goal of this part of the paper is to investigate the structure of the graph of all silting mutations of symmetric representation-finite algebras of type $D$. With this end in view, we will show how these mutations act on modified Brauer trees which we introduced in the previous subsection. 

Let $A = A_G = kQ_G/I_G$ be one of the algebras described in the previous subsection and let $G$ be the corresponding modified Brauer tree (either a Brauer tree with a double edge or a Triple Tree configuration). From now on we will always labelthe vertices of the quiver $Q$ with natural numbers from $1$ to $m$. Additionally, when describing mutations of Double Edge trees in this section, we will always start with a quiver whose two vertices corresponding to the double edge are labeled with $1$ and $2$. For $j \in (Q_G)_0$ let $P_j = e_jA$ be the right indecomposable projective $A$-module corresponding to the vertex $j$. Note that, as discussed earlier, here it is convenient to consider the double edge as two parallel regular edges. As always, the edge of $G$ (or a ``half'' of the double edge) corresponding to the vertex $j$ will be denoted by $j$ as well. Consider $A = \bigoplus_{i=1}^m P_i$ as a tilting complex over itself. Here and throughout the paper by $soc \colon P_i \xrightarrow{soc} P_i$ we denote the multiplication by the unique socle path in $e_iAe_i$. Note that this will always be the unique non-trivial cycle containing the vertex $i$ in the quiver. Recall that by $\mu_j^+(A)$ we denote the left mutation of $A$ with respect to $P_j$. The algebra $\End_{D^b(A)}(\mu_j^+(A))$ is derived equivalent to $A$, and we will see that it also belongs to one of the classes we introduced in the previous subsection, in particular, up to Morita equivalence it can be encoded by a modified Brauer tree of one of the two types. In this section we will derive simple rules for describing the algebra $\End_{D^b(A)}(\mu_j^+(A))$ using the language of modified Brauer trees. There are a few cases to consider, but in all of them the proof follows the same scheme. In each of the cases we first compute the tilting complex $\mu_j^+(A) = P_j' \oplus \bigoplus_{i\neq j} P_i$ and introduce an algebra $A' = kQ'/{I'} \in \mathfrak{C}(m)$. In each case we will define $A'$ via a quiver with relations or, equivalently, a modified Brauer tree. The algebra $A'$ will be a candidate for $\End_{D^b(A)}(\mu_j^+(A))$. After that we define a homomorphism $\varphi$ from $A'$ to $\End_{D^b(A)}(\mu_j^+(A))$, specifying the images of the primitive idempotents and the arrows of $Q'$. The goal is to show that $\varphi$ is an isomorphism. In order to show that $\varphi$ is injective, we easily observe that no socle element of $A'$ is mapped to $0$. Finally, we prove that $\varphi$ is surjective by showing that the dimensions of $A'$ and $\End_{D^b(A)}(\mu_j^+(A))$ coincide. To do so, we in turn show that the dimension of the $\Hom_{D^b(A)}$-space between any two indecomposable direct summands of $\mu_j^+(A)$ is the same as the number of linearly independent paths between the corresponding vertices of the quiver $Q'$. For the last step we make use the following formula: 

\begin{theorem}[Happel \cite{Hap}, sections III.1.3-4]\label{thm:happel} Let $T,T'$ be two bounded complexes of projective $A-$modules such that $\Hom_{\K^b(\proj-A)}(T,T'[i]) = 0$ for all $i \neq 0$ (for instance, $T$ and $T'$ are direct summands of a tilting complex). Then 

$$\dim_{\kk} \Hom_{\D^b(A)}(T,T') = \dim_{\kk} \Hom_{\K^b(\proj-A)}(T,T') = \sum_{i,j \in \Z} (-1)^{i-j} \dim_{\kk} \Hom_A(T_i, T'_j). $$

\end{theorem}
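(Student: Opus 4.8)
The plan is to reduce the statement to the Euler-characteristic invariance of the total Hom complex. The first equality, $\Hom_{\D^b(A)}(T,T') = \Hom_{\K^b(\proj-A)}(T,T')$, is the standard fact that for bounded complexes of projective modules the natural functor $\K^b(\proj-A) \to \D^b(A)$ is fully faithful; I would simply cite this (it holds because a bounded complex of projectives is K-projective, so morphisms in $\D^b(A)$ out of it are already computed in the homotopy category). The content of the theorem is therefore the second equality, which I would prove homologically.

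To this end, form the total Hom complex $\Hom^\bullet(T,T')$ of $\kk$-vector spaces, with $n$-th term $\bigoplus_{j-i=n} \Hom_A(T_i, T'_j)$ and the usual differential assembled from the differentials of $T$ and $T'$ with the standard sign. Since $A$ is finite-dimensional and each $T_i, T'_j$ is a finitely generated projective, every $\Hom_A(T_i, T'_j)$ is finite-dimensional; as $T, T'$ are bounded, $\Hom^\bullet(T,T')$ is a bounded complex of finite-dimensional spaces. The key identification is that its cohomology computes morphisms in the homotopy category: $H^n(\Hom^\bullet(T,T')) \cong \Hom_{\K^b(\proj-A)}(T, T'[n])$, the $n$-cocycles being exactly the chain maps $T \to T'[n]$ and the coboundaries being exactly the null-homotopic ones.

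Now I invoke the Euler characteristic principle: for any bounded complex $C^\bullet$ of finite-dimensional vector spaces, $\sum_n (-1)^n \dim_\kk C^n = \sum_n (-1)^n \dim_\kk H^n(C^\bullet)$, which follows by applying rank--nullity degreewise to the differential $d$, its image $\im d$, and its kernel $\Ker d$. Applying this to $C^\bullet = \Hom^\bullet(T,T')$ and using the hypothesis $\Hom_{\K^b(\proj-A)}(T,T'[n]) = 0$ for $n \neq 0$ to kill all cohomology except in degree $0$, the right-hand side collapses to $\dim_\kk H^0 = \dim_\kk \Hom_{\K^b(\proj-A)}(T,T')$. The left-hand side is $\sum_n (-1)^n \sum_{j-i=n} \dim_\kk \Hom_A(T_i, T'_j) = \sum_{i,j} (-1)^{j-i} \dim_\kk \Hom_A(T_i, T'_j)$, and since $(-1)^{j-i} = (-1)^{i-j}$ this is precisely the claimed sum.

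The argument is almost entirely formal; the only point requiring genuine care is the identification $H^n(\Hom^\bullet(T,T')) \cong \Hom_{\K^b(\proj-A)}(T,T'[n])$, i.e. matching the differential of the Hom complex (with its sign convention) against the definitions of chain map and homotopy, and keeping the grading bookkeeping consistent with the paper's convention for $[1]$. Everything else is the rank--nullity count, so I expect no real obstacle: the statement is a standard piece of homological algebra, and the reference to Happel is really about fixing the indexing and sign conventions rather than overcoming any subtlety.
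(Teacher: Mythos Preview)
Your argument is correct and is precisely the standard Euler-characteristic argument for this identity. Note that the paper does not actually prove this theorem; it merely quotes it from Happel's book (sections III.1.3--4), so there is no original proof to compare against---your approach is the one Happel uses.
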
.

We will show how to implement this scheme for one nontrivial case below, whereas for the remaining 4 cases we are only going to carefully define the algebra $A'$ and the map $\varphi$. To check that $\varphi$ is indeed an isomorphism, following the plan we have just described, is routine.  

\begin{enumerate}
\item {\bf Mutations inside a Brauer tree.} If $j$ is a regular edge such that both edges immediately following $j$ in the cyclic ordering are also regular, it is easy to see that the modified Brauer tree of $\End_{D^b(A)}(\mu_j^+(A))$ is obtained from the modified Brauer tree of $A$ by applying the ordinary Kauer move at $j$ (i.e. the two ends of the edge $j$ slide along respectively the two edges immediately following $j$ in the cyclic ordering). 

For convenience, here we remind the reader what happens in the case when $j$ is not a pendant edge. Let $k$ and $l$ be the two edges following $j$ in the cyclic ordering, as Figure below shows on the left. Then $\mu_j^+ = \bigoplus_{i\neq j}P_i \oplus P_j'$, where $P_j' = (P_j \xrightarrow{\begin{pmatrix} \gamma_l \\ \delta_k \end{pmatrix} }P_l \oplus P_k)[1]$ and $\delta_k, \gamma_l$ are as Figure \ref{fig:brauer_quiv} shows (on the left). Let $A'$ be the algebra associated to the Brauer tree that Figure shows on the right. The relevant part of the quiver $Q'$ of $A'$ is shown in Figure \ref{fig:brauer_quiv}  on the right. 

Now we define a homomorphism $\varphi \colon A' \xrightarrow{} \End_{D^b(A)}(\mu_j^+(A))$ by setting the images of the idempotents and the arrows of $Q'$ in the following way. We send every vertex $i$ of the quiver $Q'$ to the endomorphism of $\mu_j^+(A)$ induced by the corresponding identity morphism of the corresponding summand, i.e. either the morphism $P_i \xrightarrow{id} P_i$ if $i \neq j$ or the following morphism if $i = 1$:  
\begin{center}
\begin{tikzcd}
P_j \arrow[r, "\begin{pmatrix} \gamma_l \\ \delta_k \end{pmatrix}"] \arrow[d, "id"]  &  P_l \oplus P_k \arrow[d, "\begin{pmatrix} id \,\,\,\, 0 \\ 0  \,\,\,\, id \end{pmatrix}"] \\ P_j \arrow[r, "\begin{pmatrix} \gamma_l \\ \delta_k \end{pmatrix}"] &  P_l \oplus P_k   \\
\end{tikzcd} 
\end{center} 
The right part of Figure \ref{fig:brauer_quiv} shows where the arrows of $Q'$ are sent under $\varphi$. Namely, an arrow $(a \xrightarrow{} b)$ of $Q'$ for which there exists an arrow $(a \xrightarrow{\theta} b)$ in $Q$ is mapped to the endomorphism of $\mu_j^+(A)$ induced by $P_a \xrightarrow{\theta} P_b$. Otherwise an arrow $(a \xrightarrow{} b)$ is sent to the endomorphism induced by the morphism written on it in the picture. For example, $(j \xrightarrow{} d)$ is sent to $(0, \begin{pmatrix} \delta_d & 0 \end{pmatrix})$, the morphism from $P_j'$ to $P_d$ which is $0$ in degree $-1$ and $\delta_d$ in degree $0$. It is not difficult to see that $\varphi$ is indeed an isomorphism. We leave this to the reader and refer to \cite{Kauer}, \cite{Aih2}, \cite{Aih3} for further details on this classical case.

 \begin{figure}[H]
  \includegraphics[scale=1.2]{brauer_tree.pdf}
  \caption{The Brauer tree $G$ of $A$ (on the left) and the Brauer tree $G'$ of ${A' \cong \End_{D^b(A)}(\mu_1^+(A))}$ (on the right).}
  \label{fig:brauer_tree}
\end{figure}

\begin{figure}[H]
    \centering
\includegraphics[scale=1.0]{brauer_case.pdf}
 \caption{\small{Relevant parts of the quivers of $A$ (on the left) and $A' \cong \End_{D^b(A)}(\mu_j^+(A))$ (on the right)}}
    \label{fig:brauer_quiv}
\end{figure}

\begin{remark} Note that there is a particular labeling of the edges of $G'$ naturally arising from the mutation $\mu_j^+$ and the labeling of the edges of $G$. Namely, the only edge that changes its position is labeled with $j$ and the labels of all ``old'' edges are exactly the same as in $G$. This will be the case for all types of mutations of modified Brauer trees. \end{remark}

\item {\bf Mutating Double Edge trees at the double edge.}
Let $A = A_G$ be a Double Edge algebra. The cases $j = 1$ and $j = 2$ are identical, so we can assume that $j=1$ without loss of generality. For convenience we will assume that the regular edges of $G$ are labeled using deep-first search starting from the edge which is immediately followed by the double edge in the cyclic ordering, i.e. this edge is labeled with $3$. Let $k$ be the edge immediately following the double edge. The relevant parts of the quiver $Q_G = Q$ and the Double Edge tree $G$ are shown on the left in Figures \ref{fig:mut1} and \ref{fig:mut2} respectively. Clearly, we have $P_1' = (P_1 \xrightarrow{\beta_1} P_k)[1]$, where $\mu^+_j(A) = P_1' \oplus \bigoplus_{i=2}^m P_i$. 

Now let $A'$ be the Triple Tree algebra corresponding to the Triple Tree $G'$ shown below on the right in Figure \ref{fig:mut1}. Same as for Kauer moves, only one edge changes its position when going from $G$ to $G'$. Namely, the edge $1$, one ``half'' of the double edge, slides along the edge $k$ with one of its ends and forms the central triangle together with the edges $2$ and $k$. The graph $G'$ automatically defines a quiver $Q'$ and an ideal of relations $I'$, as described in the previous subsection. The relevant part of the quiver $Q'$ of $A'$ is shown in Figure \ref{fig:mut2} (on the right). Same as in the previous case, we now define a homomorphism $\varphi \colon A' \xrightarrow{} \End_{D^b(A)}(\mu_j^+(A))$ by setting the images of idempotents and arrows of $Q'$. Again, we send every vertex $i$ of the quiver $Q'$ to the endomorphism of $\mu_j^+(A)$ induced by the corresponding id morphism at this vertex, i.e. either the morphism $P_i \xrightarrow{id} P_i$ if $i \neq 1$ or $(id, id): P_1' \to P_1'$ if $i=1$. The right part of Figure \ref{fig:mut2} shows where the arrows of $Q'$ are sent under $\varphi$. Namely, an arrow $(a \xrightarrow{} b)$ of $Q'$ for which there exists an arrow $(a \xrightarrow{\theta} b)$ in $Q$ is mapped to the endomorphism of $\mu_j^+(A)$ induced by $P_a \xrightarrow{\theta} P_b$. Otherwise an arrow $(a \xrightarrow{} b)$ is sent to the endomorphism induced by the morphism written on it in the picture, e.g. $(1 \xrightarrow{} 2)$ is sent to $(0, \gamma_d \dots, \gamma_3 \alpha_2)$, $(1 \xrightarrow{} v)$ is sent to $(0, \gamma_v)$, etc. 
  
 \begin{figure}[h!]
 \includegraphics[scale=1.1]{mut1.pdf}
  \caption{The modified Brauer tree $G$ of $A$ (on the left) and the modified Brauer tree $G'$ of ${A' \cong \End_{D^b(A)}(\mu_j^+(A))}$ (on the right).}
  \label{fig:mut1}
\end{figure}

 \begin{figure}[h!]
     \begin{tiny}
  \hspace{2mm}\[\begin{tikzcd}
	&&&&&&&&&&&&& \textcolor{rgb,255:red,255;green,51;blue,68}{{\large 1}} &&& {} \\
	&&& {{\large 1}} &&&&&&&&&&&& {{\large v}} \\
	\bullet &&& {{\large 2}} &&&& {{\large v}} && \bullet &&& 2 &&& {{\large k+1}} \\
	\bullet & {{\large 3}} &&&& {{\large k}} && {{\large k+1}} & {} & \bullet & {{\large 3}} &&& {} & {{\large k}} &&&&&& {} \\
	&& \bullet & {{\large d}} &&&&&&&&& \bullet & {{\large d}} &&&& {} &&&& {} \\
	&&& \bullet & \bullet &&&&&&&&& \bullet & \bullet
	\arrow["{{\huge \alpha_2}}"{description}, from=4-2, to=3-4]
	\arrow["{{\huge \beta_2}}"{description}, from=3-4, to=4-6]
	\arrow["{{\huge \beta_1}}"{description}, from=2-4, to=4-6]
	\arrow["{{\huge \alpha_1}}"{description}, from=4-2, to=2-4]
	\arrow[curve={height=-6pt}, from=3-1, to=4-2]
	\arrow[curve={height=-6pt}, from=4-2, to=4-1]
	\arrow[curve={height=-6pt}, dashed, no head, from=4-1, to=3-1]
	\arrow["{{\huge \gamma_3}}"{description}, curve={height=-6pt}, from=5-3, to=4-2]
	\arrow["{{\huge \gamma_k}}"{description}, curve={height=-12pt}, from=4-8, to=4-6]
	\arrow["{{\huge \gamma_v}}"{description}, curve={height=-6pt}, from=4-6, to=3-8]
	\arrow[curve={height=-12pt}, dashed, no head, from=3-8, to=4-8]
	\arrow[dashed, no head, from=5-4, to=5-3]
	\arrow["{{\huge \gamma_d}}"{description}, curve={height=-6pt}, from=4-6, to=5-4]
	\arrow[curve={height=6pt}, shorten >=5pt, from=6-5, to=5-4]
	\arrow[curve={height=6pt}, from=5-4, to=6-4]
	\arrow[curve={height=6pt}, dashed, no head, from=6-4, to=6-5]
	\arrow[curve={height=-6pt}, dashed, no head, from=4-10, to=3-10]
	\arrow[curve={height=-6pt}, from=4-11, to=4-10]
	\arrow[curve={height=-6pt}, from=3-10, to=4-11]
	\arrow["{{\huge \gamma_3}}"{description}, curve={height=-6pt}, from=5-13, to=4-11]
	\arrow[dashed, no head, from=5-14, to=5-13]
	\arrow[curve={height=6pt}, from=5-14, to=6-14]
	\arrow[curve={height=6pt}, dashed, no head, from=6-14, to=6-15]
	\arrow[curve={height=6pt}, from=6-15, to=5-14]
	\arrow["{{\huge \alpha_2}}"{description}, from=4-11, to=3-13]
	\arrow["{{\huge (0,\gamma_d\dots\gamma_3\alpha_2)}}"{description, text={rgb,255:red,255;green,51;blue,68}}, curve={height=6pt}, from=1-14, to=3-13]
	\arrow["{{\huge \beta_2}}"{description}, from=3-13, to=4-15]
	\arrow["{{\huge \gamma_d}}"{description}, curve={height=-6pt}, from=4-15, to=5-14]
	\arrow["{{\huge (0, id)}}"{description, text={rgb,255:red,255;green,51;blue,68}}, curve={height=12pt}, from=4-15, to=1-14]
	\arrow["{{\huge \gamma_k}}"{description}, curve={height=-6pt}, from=3-16, to=4-15]
	\arrow["{{\huge (0,\gamma_v)}}"{description, text={rgb,255:red,255;green,51;blue,68}}, curve={height=-6pt}, from=1-14, to=2-16]
	\arrow[curve={height=-6pt}, dashed, no head, from=2-16, to=3-16]
\end{tikzcd}\]\end{tiny}
  \caption{\small{Relevant parts of the quivers of $A$ (on the left) and $A' \cong \End_{D^b(A)}(\mu_j^+(A))$ (on the right)}}
  \label{fig:mut2}
\end{figure}
 
 It is trivial to check that $\varphi$ is indeed a homomorphism and that it does not map socle elements to $0$, hence it is injective. What remains to show is that $\varphi$ is surjective. Again, it is trivial to check that $\dim_\kk\Hom_{D^b(A)}(P_i,P_t)$ is equal to the number of linearly independent paths from $i$ to $t$ in $Q'$, where $i,t\neq 1$. When the remaining summand $P_1'$ is involved, Theorem \ref{thm:happel} gives
 $$\dim_\kk \Hom_{D^b(A)} (P_1', P_t) = \dim_\kk \Hom_{A} (P_k, P_t) - \dim_\kk \Hom_{A}(P_1,P_t), \quad t \neq 1 $$
 When both summands on the right-hand side of the formula vanish, the required assertion is clear. Now if $t$ belongs to the cycle $(k,d,\dots,3,2)$, $t \neq k$, then both summands on the right-hand side are $1$ and indeed there are no non-zero paths from $1$ to $t$ in $Q'$. Next let $t$ be a vertex in the cycle $(v, \dots, k+1,k)$ such that $t \neq k$. Then $\dim_\kk \Hom_{A}(P_k, P_t) = 1$, $\dim_\kk \Hom_{A}(P_1,P_t) = 0$ and there is indeed exactly one non-zero path from $1$ to $t$ in $Q'$. If $t = k$, we have $\dim_\kk \Hom_{A} (P_k, P_t) = 2$, $\dim_\kk \Hom_{A}(P_1,P_t) = 1$, and the number of non-zero paths from $1$ to $k$ in $Q'$ is indeed one. Analogously, for every $t \neq 1$ we have:
 $$\dim_\kk \Hom_{D^b(A)} (P_t, P_1') = \dim_\kk \Hom_{A} (P_t, P_k) - \dim_\kk \Hom_{A}(P_t,P_1)$$
 As before, we consider the cases when at least one of the summands on the left-hand side is nonzero. If $t$ belongs to the cycle $( v \dots, k+1, k)$, $t \neq k$, we have $\dim_k \Hom_{A}(P_t,P_k) = 1$, $\Hom_{A}(P_t,P_1) = 0$ and, indeed, $\dim_k \Hom_{D^b(A)} (P_t, P_1') = 1$ is the number of independent paths from $t$ to $1$ in $Q'$. Now let $t$ belong to the cycle $(k,d,\dots,3,2)$, $t \neq k$. Then ${\dim_\kk \Hom_{A}(P_t,P_k) = 1}$, ${\dim_\kk \Hom_{A}(P_t,P_1) = 1}$ and ${\dim_k \Hom_{D^b(A)} (P_t, P_1') = 0}$, as desired. If $t = k$, we have $\dim_\kk \Hom_{A}(P_k,P_k) = 2$, $\dim_\kk\Hom_{A}(P_k,P_1) = 1$, and $\dim_\kk \Hom_{D^b(A)} (P_k, P_1') = 1$, same as the number of paths from $k$ to $1$ in $Q'$. It remains to check that $\dim_k \Hom_{D^b(A)} (P_1', P_1') = 2$, but this is again straightforward using Theorem \ref{thm:happel}.  

\item {\bf Mutating Double Edge Brauer trees at an edge followed by the double edge.} Let $A = A_G$ be again a Double Edge algebra and let $j = 3$ be the edge of $G$ followed by the double edge $1,2$. Figure \ref{fig:mut3} illustrates how the Double Edge tree $G'$ of $A'$ is obtained from $G$. Again, same as for mutations of Brauer trees, only one edge changes its position. Namely, as it is to be expected, one end of the edge $3$ slides along the edge $d$, which follows $3$ in the cyclic ordering. The other end of $3$, however, slides ``around'' the double edge, leaving it pendant. The algebra $A'$ is the Double Edge algebra constructed from $G'$. Figure \ref{fig:mut4} shows the relevant part of the quiver of $A$ on the left and the relevant part of the quiver of $A'$ on the right. 
 \begin{figure}[H]
  \includegraphics[scale=1.0]{mut2.pdf}
  \caption{The modified Brauer tree $G$ of $A$ (on the left) and the modified Brauer tree $G'$ of ${A' \cong \End_{D^b(A)}(\mu_3^+(A))}$ (on the right).}
  \label{fig:mut3}
\end{figure}

 \begin{figure}[H]
 \begin{tiny}
\[\begin{tikzcd}
	\cdot & \cdot \\
	s & l &&&&&&&&&& l & \cdot \\
	& d & {d+1} & 1 & \cdot &&& \cdot & \cdot & 1 &&&& {d+1} \\
	4 && 3 & 2 & k & \cdot &&& v & 2 & {} & \textcolor{rgb,255:red,255;green,51;blue,54}{3} \\
	\cdot & v &&&& \cdot && q &&&&&& d \\
	\cdot && q && \cdot &&& \cdot &&& k && s & 4 \\
	&&&&&&&&&&& \cdot & \cdot & \cdot & 5
	\arrow["{\alpha_1}"{description}, from=4-3, to=3-4]
	\arrow["{\alpha_2}"', from=4-3, to=4-4]
	\arrow["{\beta_1}"{description}, from=3-4, to=4-5]
	\arrow["{\beta_2}"', from=4-4, to=4-5]
	\arrow["{\gamma_3}"{description}, curve={height=-6pt}, from=5-2, to=4-3]
	\arrow[curve={height=-6pt}, from=6-3, to=5-2]
	\arrow[curve={height=-6pt}, from=4-5, to=5-6]
	\arrow[curve={height=-6pt}, from=5-6, to=6-5]
	\arrow[curve={height=-12pt}, dashed, no head, from=6-5, to=6-3]
	\arrow[from=4-5, to=3-5]
	\arrow[curve={height=-12pt}, dashed, no head, from=3-5, to=4-6]
	\arrow[from=4-6, to=4-5]
	\arrow["{\delta_d}"{description}, curve={height=6pt}, from=4-3, to=3-2]
	\arrow["{\delta_3}"{description}, curve={height=6pt}, from=4-1, to=4-3]
	\arrow["{\delta_s}"{description}, from=3-2, to=2-1]
	\arrow[curve={height=-12pt}, dashed, no head, from=4-1, to=2-1]
	\arrow[curve={height=6pt}, from=5-2, to=5-1]
	\arrow[curve={height=6pt}, dashed, no head, from=5-1, to=6-1]
	\arrow[curve={height=6pt}, from=6-1, to=5-2]
	\arrow[from=3-3, to=3-2]
	\arrow[curve={height=6pt}, from=2-1, to=1-2]
	\arrow[curve={height=6pt}, from=1-1, to=2-1]
	\arrow[curve={height=6pt}, dashed, no head, from=1-2, to=1-1]
	\arrow["{\gamma_l}"', from=3-2, to=2-2]
	\arrow[curve={height=-6pt}, dashed, no head, from=2-2, to=3-3]
	\arrow["{\gamma_3\alpha_2}"'{text={rgb,255:red,255;green,51;blue,54}}, from=4-9, to=4-10]
	\arrow["{\gamma_3\alpha_1}"{description, text={rgb,255:red,255;green,51;blue,54}}, from=4-9, to=3-10]
	\arrow[curve={height=-6pt}, from=5-8, to=4-9]
	\arrow[curve={height=-6pt}, from=6-8, to=5-8]
	\arrow[curve={height=6pt}, from=4-9, to=3-9]
	\arrow[curve={height=6pt}, dashed, no head, from=3-9, to=3-8]
	\arrow[curve={height=6pt}, from=3-8, to=4-9]
	\arrow[curve={height=-18pt}, dashed, no head, from=6-11, to=6-8]
	\arrow["{{\footnotesize (0, \begin{pmatrix} 0 \\ id \\ 0 \end{pmatrix})}}"{description, text={rgb,255:red,255;green,51;blue,54}}, from=4-10, to=4-12]
	\arrow["{{\footnotesize (0, \begin{pmatrix} id \\ 0 \\ 0 \end{pmatrix})}}"{pos=0.3, text={rgb,255:red,255;green,51;blue,54}}, from=3-10, to=4-12]
	\arrow["{(0, ( -\beta_1 \  \beta_2 \  0 ))}"{description, text={rgb,255:red,255;green,51;blue,54}}, curve={height=-12pt}, from=4-12, to=6-11]
	\arrow["{{\footnotesize (0, \begin{pmatrix}0 \\ 0 \\ id \end{pmatrix})}}"{description, pos=0.4, text={rgb,255:red,255;green,51;blue,54}}, curve={height=-6pt}, from=5-14, to=4-12]
	\arrow["{(0, (0 \ 0 \ \gamma_l))}"'{text={rgb,255:red,255;green,51;blue,54}}, curve={height=-6pt}, from=4-12, to=2-12]
	\arrow[curve={height=-6pt}, from=2-12, to=2-13]
	\arrow[curve={height=-6pt}, from=3-14, to=5-14]
	\arrow[curve={height=-6pt}, dashed, no head, from=2-13, to=3-14]
	\arrow[curve={height=6pt}, from=5-14, to=6-13]
	\arrow[curve={height=-6pt}, dashed, no head, from=6-14, to=6-13]
	\arrow["\delta_3\delta_d"{description, pos=0.4, text={rgb,255:red,255;green,51;blue,54}},curve={height=6pt}, from=6-14, to=5-14]
	\arrow[from=7-12, to=6-13]
	\arrow[from=6-13, to=7-13]
	\arrow[curve={height=-6pt}, dashed, no head, from=7-13, to=7-12]
	\arrow[from=6-14, to=7-14]
	\arrow[from=7-15, to=6-14]
	\arrow[curve={height=6pt}, dashed, no head, from=7-14, to=7-15]
\end{tikzcd}\] \end{tiny}
  \caption{\small{Relevant parts of the quivers of $A$ (on the left) and $A' \cong \End_{D^b(A)}(\mu_3^+(A))$ (on the right)}}
  \label{fig:mut4}
\end{figure}

We have ${\mu_3^+(A) = P_3' \oplus \bigoplus_{i\neq 3}^m P_i}$, where ${P_3' = (P_3 \xrightarrow{\begin{pmatrix} \alpha_1 \\ \alpha_2 \\ \delta_d \end{pmatrix}} P_1 \oplus P_2 \oplus P_d)[1]}$. Now we need to define the map $\varphi \colon A' \xrightarrow{} \End_{D^b(A)}(\mu_3^+(A))$ by setting it on arrows. As before, the images of the arrows are as indicated in Figure \ref{fig:mut4}. Namely, $$\varphi(v \to 1) = \gamma_3\alpha_1, \,\, \varphi(v \to 2) = \gamma_3\alpha_2,  \,\, \varphi(1 \to 3) = (0, {\footnotesize \begin{pmatrix} id \\ 0 \\ 0 \end{pmatrix}}),  \,\, \varphi(2 \to 3) = ( 0,{\footnotesize  \begin{pmatrix} 0 \\ id \\ 0 \end{pmatrix}}), $$ $$ \varphi(3 \to k) = (0, \begin{pmatrix} -\beta_1 & \beta_2 & 0 \end{pmatrix})),  \,\, \varphi(3 \to l) = (0, \begin{pmatrix} 0 & 0 & \gamma_l \end{pmatrix})),  \,\, \varphi(d \to 3) = (0, {\footnotesize \begin{pmatrix} 0 \\ 0 \\ id \end{pmatrix}}).$$
Here by morphisms on the right-hand side we mean the endomorphisms of $\mu_3^+(A)$ induced by them. Unmarked arrows are sent to endomorphisms induced by the corresponding arrows in $Q$. 

\item {\bf Mutating a Triple Tree algebra at a central edge followed by a non-empty tree.} Now let $A=A_G$ be a Triple Tree algebra. The edges forming the central triangle in $G$ are labeled with $1, 2$ and $m$, where $m$ is the number of edges in $G$, as shown in Figure \ref{fig:mut5} on the left. Although right now such a labeling may seem strange, this will be convenient for us later. Inside each of the three trees of $G$ we will adopt the same labeling as for Double Edge Brauer trees in the previous cases, i.e. by the depth-first search (starting from the vertex belonging to the central triangle). Let $j = 1$ and assume additionally that the tree immediately following $1$ in the cyclic ordering around one of its ends is non-empty. Figure \ref{fig:mut5} illustrates how the modified Brauer tree $G'$ of $A'$ is obtained from $G$ in this case. To see why this is in fact natural one should consider all three edges forming a central triangle as regular, one-dimensional edges and mutate at $1$ as if it were a regular edge in a Brauer tree. Then one end of $1$ would slide along $m$, which immediately follows $1$ in the cyclic ordering, and the other end would slide along $d$, which also immediately follows $1$ in the cyclic ordering. This way we again obtain a configuration consisting of three trees attached to the vertices of a 3-cycle (now formed by the edges $1$, $2$ and $d$), hence an algebra $A'$ of the Triple Tree type.

Figure \ref{fig:mut6} shows the relevant part of the quiver of $A$ on the left and the relevant part of the quiver of $A'$ on the right. We have ${\mu_1^+(A) = P_1' \oplus \bigoplus_{i=2}^m P_i}$, where ${P_1' = (P_1 \xrightarrow{\begin{pmatrix} \alpha_m \\ \gamma_d \end{pmatrix}} P_m \oplus P_d )[1]}$. Now we need to define the map ${\varphi \colon A' \xrightarrow{} \End_{D^b(A)}(\mu_1^+(A))}$ by setting it on the arrows of $Q'$. Once again, the images of the arrows are as indicated in Figure \ref{fig:mut6}. Namely, 

\begin{center}
    
$\varphi(q \to m) = \gamma_1\alpha_m, \,\, \varphi(m \to 1) = (0, {\footnotesize \begin{pmatrix} id \\ 0 \end{pmatrix}}),  \,\,  \varphi(d \to 1) = (0, {\footnotesize\begin{pmatrix} 0 \\ id \end{pmatrix}}), $

$\varphi(1 \to 2) = ( 0, \begin{pmatrix} -\alpha_2 & \gamma_a \dots \gamma_2 \end{pmatrix}),  \,\, \varphi(2 \to d) = \alpha_1\gamma_d,  \,\, \varphi(2 \to d) = \alpha_1\gamma_d,  \,\, \varphi(1 \to v) = (0, \begin{pmatrix} 0 & \delta_v \end{pmatrix}).$
\end{center}

Here and in the next cases by morphisms on the right-hand side we mean the endomorphisms of $\mu_j^+(A)$ induced by them. Other arrows are sent to the endomorphisms induced by the corresponding arrows in $Q$. 

 \begin{figure}[H]
  \includegraphics[scale=0.8]{mut4.pdf}
  \caption{The modified Brauer tree $G$ of $A$ (on the left) and the modified Brauer tree $G'$ of ${A' \cong \End_{D^b(A)}(\mu_1^+(A))}$ (on the right).}
  \label{fig:mut5}
\end{figure}

 \begin{figure}[H]
 \begin{tiny}
\[\begin{tikzcd}
	&&& q && v & {d+1} &&&& q &&& v && {d+1} \\
	{} & l && \textcolor{rgb,255:red,92;green,92;blue,214}{1} && d &&&& l \\
	& \textcolor{rgb,255:red,92;green,92;blue,214}{m} &&&&& \cdot &&&& m &&& \textcolor{rgb,255:red,92;green,92;blue,214}{1} &&& \textcolor{rgb,255:red,92;green,92;blue,214}{d} \\
	\cdot &&&&& a & \cdot &&&& \cdot \\
	&&& \textcolor{rgb,255:red,92;green,92;blue,214}{2} &&&&&&&& \cdot & \textcolor{rgb,255:red,92;green,92;blue,214}{2} &&&& a \\
	&& \cdot && k &&&&&&&&& k &&& \cdot & \cdot \\
	&&& \cdot & \cdot &&&&&&&& \cdot & \cdot
	\arrow["{\alpha_m}"{description}, draw={rgb,255:red,92;green,92;blue,214}, curve={height=12pt}, from=2-4, to=3-2]
	\arrow["{\alpha_2}"{description}, draw={rgb,255:red,92;green,92;blue,214}, curve={height=12pt}, from=3-2, to=5-4]
	\arrow["{\alpha_1}"{description}, draw={rgb,255:red,92;green,92;blue,214}, curve={height=18pt}, from=5-4, to=2-4]
	\arrow["{\gamma_1}"{description}, shift right=1, curve={height=-6pt}, from=1-4, to=2-4]
	\arrow[curve={height=-6pt}, from=4-1, to=3-2]
	\arrow[curve={height=-6pt}, from=5-4, to=6-3]
	\arrow[curve={height=-18pt}, dashed, no head, from=6-3, to=4-1]
	\arrow["{\gamma_2}"{description}, curve={height=-12pt}, from=6-5, to=5-4]
	\arrow[curve={height=6pt}, from=6-5, to=7-4]
	\arrow[curve={height=6pt}, from=7-5, to=6-5]
	\arrow[curve={height=12pt}, dashed, no head, from=7-4, to=7-5]
	\arrow["{\gamma_d}",,curve={height=-12pt}, from=2-4, to=2-6]
	\arrow["{\delta_v}"', curve={height=-6pt}, from=2-6, to=1-6]
	\arrow["{\delta_d}"{description}, curve={height=-6pt}, from=1-7, to=2-6]
	\arrow[curve={height=12pt}, dashed, no head, from=1-7, to=1-6]
	\arrow["{(0, \begin{pmatrix}id \\ 0 \end{pmatrix})}"{description, text={rgb,255:red,214;green,92;blue,92}}, curve={height=-12pt}, from=3-11, to=3-14]
	\arrow["{(0,(-\alpha_2 \ \gamma_a\dots\gamma_2))}"{description, text={rgb,255:red,214;green,92;blue,92}}, draw={rgb,255:red,92;green,92;blue,214}, from=3-14, to=5-13]
	\arrow["{(0, (0 \ \delta_v))}"{description, text={rgb,255:red,214;green,92;blue,92}}, curve={height=-6pt}, from=3-14, to=1-14]
	\arrow["{(0, \begin{pmatrix} 0 \\ id \end{pmatrix})}"{description, text={rgb,255:red,214;green,92;blue,92}}, draw={rgb,255:red,92;green,92;blue,214}, curve={height=12pt}, from=3-17, to=3-14]
	\arrow["{\delta_d}"{description}, curve={height=-6pt}, from=1-16, to=3-17]
	\arrow[curve={height=-12pt}, dashed, from=1-14, to=1-16]
	\arrow["{\alpha_1\gamma_d}"{description, text={rgb,255:red,214;green,92;blue,92}}, draw={rgb,255:red,92;green,92;blue,214}, from=5-13, to=3-17]
	\arrow["{\gamma_2}"{description}, curve={height=-6pt}, from=6-14, to=5-13]
	\arrow["{\gamma_a}"{description}, curve={height=-12pt}, from=3-17, to=5-17]
	\arrow[curve={height=-18pt}, dashed, from=5-17, to=6-14]
	\arrow[curve={height=6pt}, from=6-14, to=7-13]
	\arrow[curve={height=6pt}, dashed, no head, from=7-13, to=7-14]
	\arrow[curve={height=6pt}, from=7-14, to=6-14]
	\arrow["{\gamma_l}"{description}, curve={height=-6pt}, from=3-11, to=2-10]
	\arrow[curve={height=6pt}, from=5-17, to=6-17]
	\arrow[curve={height=6pt}, dashed, no head, from=6-17, to=6-18]
	\arrow[curve={height=6pt}, from=6-18, to=5-17]
	\arrow[curve={height=-12pt}, dashed, no head, from=2-10, to=1-11]
	\arrow["{\gamma_1\alpha_m}"{description, text={rgb,255:red,214;green,92;blue,92}}, curve={height=-12pt}, from=1-11, to=3-11]
	\arrow["{\gamma_l}"{description}, curve={height=-6pt}, from=3-2, to=2-2]
	\arrow[curve={height=-18pt}, dashed, no head, from=2-2, to=1-4]
	\arrow[curve={height=-6pt}, from=4-11, to=3-11]
	\arrow[from=5-13, to=5-12]
	\arrow[curve={height=-6pt}, dashed, no head, from=5-12, to=4-11]
	\arrow["{\gamma_a}"', curve={height=-6pt}, from=2-6, to=4-6]
	\arrow[curve={height=-6pt}, dashed, no head, from=4-6, to=6-5]
	\arrow[curve={height=-6pt}, from=4-7, to=4-6]
	\arrow[curve={height=-6pt}, from=4-6, to=3-7]
	\arrow[shift left=1, curve={height=-6pt}, dashed, no head, from=3-7, to=4-7]
\end{tikzcd}\]\end{tiny}
  \caption{{\small Relevant parts of the quivers of $A$ (on the left) and $A' \cong \End_{D^b(A)}(\mu_1^+(A))$ (on the right)}}
  \label{fig:mut6}
\end{figure}

\item {\bf Mutating a Triple Tree algebra at a central edge followed by an empty tree.} Let $A=A_G$ be a Triple Tree algebra as in the previous case and $j = 1$, but now assume that the tree immediately following the edge $1$ in $G$ in the cyclic ordering {\it is} empty. Figure \ref{fig:mut7} illustrates how the modified Brauer tree $G'$ of $A'$ is obtained from $G$ in this case. Let us first informally explain one way to see what is going on. Again, consider the edges $1$, $2$, $m$ as regular edges, forming a 3-cycle in $G$. Observe that in the quiver $Q$ there is no 2-cycle $(1,2)$, so, in some sense, at least for the purposes of mutations, we can think of the edge $1$ in $G$ as pendant, because the cyclic ordering of the half-edges around one of its ends consists of just one element, namely, $1$. Hence, when mutating at $1$, it is to be expected that the end of $1$ incident to $2$ remains fixed. The other end of $1$ behaves in a usual way, namely, it slides along the edge $m$, which is followed by $1$ in the cyclic ordering. To sum up, the edge $1$ moves to take exactly the same position as the edge $2$, so we end up with a double, ``two-sheeted'' edge $1,2$, exactly as Figure \ref{fig:mut7} depicts. In other words, one could temporarily remove the edge $2$, mutate at $1$ as in an ordinary Brauer tree, and then glue in the edge $2$ back. The graph on the left of Figure \ref{fig:mut7} determines a Double Edge type algebra $A'$.

Figure \ref{fig:mut8} shows the relevant part of the quiver of $A$ on the left and the relevant part of the quiver of $A'$ on the right. We have ${\mu_1^+(A) = P_1' \oplus \bigoplus_{i=2}^m P_i}$, where ${P_1' = (P_1 \xrightarrow{ \alpha_m} P_m)[1]}$. Now we need to define the map ${\varphi \colon A' \xrightarrow{} \End_{D^b(A)}(\mu_1^+(A))}$ by setting it on the arrows of $Q'$. Once again, the images of the arrows are as indicated in Figure \ref{fig:mut8}. Namely, 
\begin{center}
$ \varphi(m \to 1) = (0,id), \varphi(q \to m) = \gamma_1\alpha_m, \varphi(1 \to k) = (0, \alpha_2\gamma_k) $ \end{center}
 
 Other arrows are sent to the endomorphisms induced by the corresponding arrows of $Q$. 

 \begin{figure}[H]
  \includegraphics[scale=0.9]{Mut5.pdf}
  \caption{The modified Brauer tree $G$ of $A$ (on the left) and the modified Brauer tree $G'$ of ${A' \cong \End_{D^b(A)}(\mu_1^+(A))}$ (on the right).}
  \label{fig:mut7}
\end{figure}

 \begin{figure}[H]
 \begin{small}
 \[\begin{tikzcd}
	& q \\
	l && \textcolor{rgb,255:red,92;green,92;blue,214}{1} &&& q & 1 \\
	{} & {} &&& l \\
	& \textcolor{rgb,255:red,92;green,92;blue,214}{m} & {} & \textcolor{rgb,255:red,92;green,92;blue,214}{2} && m & 2 & k \\
	& v && k && v & {} & {} \\
	& {} && {}
	\arrow["{\alpha_m}"{description}, color={rgb,255:red,92;green,92;blue,214}, from=2-3, to=4-2]
	\arrow["{\alpha_2}", color={rgb,255:red,92;green,92;blue,214}, curve={height=6pt}, from=4-2, to=4-4]
	\arrow["{\alpha_1}"{description}, color={rgb,255:red,92;green,92;blue,214}, from=4-4, to=2-3]
	\arrow["{\alpha_2}", from=4-6, to=4-7]
	\arrow["{\gamma_k}", from=4-7, to=4-8]
	\arrow["{(0,id)}"{description, text={rgb,255:red,255;green,51;blue,54}}, from=4-6, to=2-7]
	\arrow["{(0,\alpha_2\gamma_k)}"{description, text={rgb,255:red,255;green,51;blue,54}}, from=2-7, to=4-8]
	\arrow["{\gamma_l}"{description}, curve={height=-6pt}, from=4-6, to=3-5]
	\arrow["{\gamma_1\alpha_m}"'{text={rgb,255:red,255;green,51;blue,54}}, curve={height=-6pt}, from=2-6, to=4-6]
	\arrow[curve={height=-12pt}, dashed, no head, from=3-5, to=2-6]
	\arrow["{\gamma_l}"curve={height=-6pt}, from=4-2, to=2-1]
	\arrow[curve={height=-6pt}, dashed, no head, from=2-1, to=1-2]
	\arrow["{\gamma_1}"curve={height=-6pt}, from=1-2, to=2-3]
	\arrow["{\gamma_m}"{description}, curve={height=-6pt}, from=5-2, to=4-2]
	\arrow["{\gamma_k}"{description}, curve={height=-6pt}, from=4-4, to=5-4]
	\arrow[curve={height=-12pt}, dashed, no head, from=5-4, to=5-2]
	\arrow["{\gamma_m}"{description}, curve={height=-6pt}, from=5-6, to=4-6]
	\arrow[curve={height=-6pt}, dashed, no head, from=5-8, to=5-6]
	\arrow[curve={height=-6pt}, from=4-8, to=5-8]
\end{tikzcd}\] \end{small}
  \caption{Relevant parts of the quivers of $A$ (on the left) and $A' \cong \End_{D^b(A)}(\mu_1^+(A))$ (on the right)}
  \label{fig:mut8}
\end{figure}

\item {\bf Mutating a Triple Tree algebra at a regular edge followed by a central edge.} Let $A_G = A$ be a Triple Tree algebra and let $j$ be a regular edge of $G$ immediately followed by the central edge $1$. Let $d$ be the other edge which immediately follows $j$. Figure \ref{fig:mut9} illustrates how the modified Brauer tree  $G'$ of $A'$ is obtained from $G$ in this case. To see why it is natural, just consider the edge $1$ as a regular, one-dimensional edge and perform the ordinary Kauer move at the edge $j$, sliding one of its ends along $d$ and the other along $1$. 

Figure \ref{fig:mut8} shows the relevant part of the quiver of $A$ on the left and the relevant part of the quiver of $A'$ on the right. We have ${\mu_j^+(A) = P_j' \oplus \bigoplus_{i\neq j}^m P_i}$, where ${P_j' = (P_j \xrightarrow{\begin{pmatrix} \gamma_1 \\ \delta_d \end{pmatrix}} P_1 \oplus P_d)[1]}$. Again, we need to define the map ${\varphi \colon A' \xrightarrow{} \End_{D^b(A)}(\mu_j^+(A))}$ by setting it on the arrows of $Q'$. The images of the arrows are as indicated in Figure \ref{fig:mut10}. Namely, 
\begin{center}
$\varphi(k \to 1) = \gamma_j \gamma_1, \varphi(1 \to j) = (0, \begin{pmatrix} id \\ 0 \end{pmatrix}), \varphi(j+1 \to d) = \delta_j\delta_d, $  
$\varphi(j \to p) = (0, (\gamma_p,0)), \varphi(j \to s) = (0, (0,\gamma_s)),  \varphi(d \to j) = (0, \begin{pmatrix} 0 \\ id \end{pmatrix})  $
\end{center}
Other arrows are sent to the endomorphisms induced by the corresponding arrows of $Q$. 

 \begin{figure}[H]
  \includegraphics[scale=0.95]{mut6-2.pdf}
  \caption{The modified Brauer tree $G$ of $A$ (on the left) and the modified Brauer tree $G'$ of ${A' \cong \End_{D^b(A)}(\mu_j^+(A))}$ (on the right).}
  \label{fig:mut9}
\end{figure}

 \begin{figure}[H]
 \begin{footnotesize}
   
\[\begin{tikzcd}
	&&&&&&&&&& {} \\
	{} & {d+1} &&&&&&&& {d+1} & d & {j+1} \\
	s & d && {j+1} &&&&&& s & j \\
	&& j && p &&&&&&&& p \\
	& k && \textcolor{rgb,255:red,92;green,92;blue,214}{1} &&&&& k && \textcolor{rgb,255:red,92;green,92;blue,214}{1} \\
	& l &&&& q &&& l &&&& q \\
	& \textcolor{rgb,255:red,92;green,92;blue,214}{m} &&&& \textcolor{rgb,255:red,92;green,92;blue,214}{2} &&& \textcolor{rgb,255:red,92;green,92;blue,214}{m} &&&& \textcolor{rgb,255:red,92;green,92;blue,214}{2} \\
	& {} &&&& {} &&& {} &&&& {}
	\arrow["{\gamma_1}"{description}, curve={height=-6pt}, from=4-3, to=5-4]
	\arrow["{\gamma_p}"{description}, curve={height=-6pt}, from=5-4, to=4-5]
	\arrow[curve={height=-6pt}, from=7-6, to=8-6]
	\arrow[curve={height=-6pt}, from=8-2, to=7-2]
	\arrow[curve={height=-12pt}, dashed, from=8-6, to=8-2]
	\arrow[curve={height=12pt}, dashed, no head, from=3-4, to=3-2]
	\arrow["{\delta_d}"{description}, curve={height=-6pt}, from=4-3, to=3-2]
	\arrow["{\delta_j}"{description}, curve={height=-6pt}, from=3-4, to=4-3]
	\arrow["{\gamma_d}"{description}, curve={height=-6pt}, from=2-2, to=3-2]
	\arrow["{\gamma_j}"{description}, curve={height=-6pt}, from=5-2, to=4-3]
	\arrow["{\gamma_l}"{description}, from=7-2, to=6-2]
	\arrow[dashed, no head, from=6-2, to=5-2]
	\arrow[from=6-6, to=7-6]
	\arrow[curve={height=-12pt}, dashed, no head, from=4-5, to=6-6]
	\arrow["{\alpha_m}"{description}, draw={rgb,255:red,92;green,92;blue,214}, from=5-4, to=7-2]
	\arrow["{\alpha_2}"{description}, draw={rgb,255:red,92;green,92;blue,214}, from=7-2, to=7-6]
	\arrow["{\alpha_1}"{description}, draw={rgb,255:red,92;green,92;blue,214}, from=7-6, to=5-4]
	\arrow["{\alpha_2}"{description}, draw={rgb,255:red,92;green,92;blue,214},, from=7-9, to=7-13]
	\arrow["{\alpha_1}"{description}, draw={rgb,255:red,92;green,92;blue,214}, from=7-13, to=5-11]
	\arrow["{\alpha_m}"{description}, draw={rgb,255:red,92;green,92;blue,214}, from=5-11, to=7-9]
	\arrow[from=6-13, to=7-13]
	\arrow["{{\footnotesize (0, \begin{pmatrix} id \\ 0 \end{pmatrix})}}"{description, text={rgb,255:red,255;green,51;blue,58}}, from=5-11, to=3-11]
	\arrow["{(0, (0 \ \gamma_s))}"{text={rgb,255:red,255;green,51;blue,58}}, curve={height=-6pt}, from=3-11, to=3-10]
	\arrow["{{\footnotesize (0, \begin{pmatrix} 0 \\ id \end{pmatrix})}}"'{text={rgb,255:red,255;green,51;blue,58}}, curve={height=-6pt}, from=2-11, to=3-11]
	\arrow[curve={height=-6pt}, from=2-11, to=1-11]
	\arrow["{\delta_j\delta_d}"{description, text={rgb,255:red,255;green,51;blue,58}}, curve={height=-6pt}, from=2-12, to=2-11]
	\arrow[curve={height=-12pt}, dashed, no head, from=1-11, to=2-12]
	\arrow[curve={height=-6pt}, from=2-10, to=2-11]
	\arrow[curve={height=-6pt}, dashed, no head, from=3-10, to=2-10]
	\arrow[curve={height=-6pt}, from=8-9, to=7-9]
	\arrow[curve={height=-6pt}, from=7-13, to=8-13]
	\arrow[curve={height=-12pt}, dashed, no head, from=8-13, to=8-9]
	\arrow["{(0,(\gamma_p \ 0))}"{description, text={rgb,255:red,255;green,51;blue,58}}, from=3-11, to=4-13]
	\arrow[dashed, no head, from=4-13, to=6-13]
	\arrow[dashed, no head, from=6-9, to=5-9]
	\arrow["{\gamma_j\gamma_1}"{description, pos=0.4, text={rgb,255:red,255;green,51;blue,58}}, curve={height=-6pt}, from=5-9, to=5-11]
	\arrow["{\gamma_l}"{description}, from=7-9, to=6-9]
	\arrow["{\gamma_s}"{description}, curve={height=-6pt}, from=3-2, to=3-1]
	\arrow[curve={height=-12pt}, dashed, no head, from=3-1, to=2-2]
\end{tikzcd}\] \end{footnotesize}
  \caption{{\small Relevant parts of the quivers of $A$ (on the left) and $A' \cong \End_{D^b(A)}(\mu_j^+(A))$ (on the right)}}
  \label{fig:mut10}
\end{figure}
\end{enumerate} 

\section{The two algebras \texorpdfstring{$\Lambda$}{Lg} and \texorpdfstring{$\mathcal{R}$}{Lg}}\label{sec:2alg}
In this section we are going to discuss some facts about the two symmetric representation-finite algebras of type $D$ that play a central role in the proof of the main result. Being derived equivalent, these two algebras have isomorphic derived Picard groups, so we will be able to use one or the other depending on what is more convenient for a particular part of the proof. We begin with definitions. Let $m \geq 4$. 

\begin{definition}\label{def:Lambda} Let $Q_m$ be the quiver shown in Figure \ref{fig:algLambda} on the left and let $I_m$ be the two-sided ideal of $\kk Q_m$ generated by $\alpha_1\delta_1 - \alpha_2\delta_2$ and all paths of length $m$. Let $\Lambda_m = \kk Q_m/I_m$. In the terms of the previous section, $\Lambda_m$ is the Double Edge algebra corresponding to the star formed by $m-2$ regular edges and a double edge, as shown in Figure \ref{fig:algLambda} on the right. We will denote such a modified Brauer tree with this particular labeling of edges by $\Gamma_m$. By $P_i = e_i\Lambda_m$ for $i=1, \dots, m$ we denote the indecomposable projective $\Lambda_m-$modules. 
\end{definition}

 \begin{figure}[h!]
%\begin{subfigure}{.5\textwidth}
 % \centering
 % \includegraphics[width=.6\linewidth]{Lambda1.pdf}
%\end{subfigure}%
%\begin{subfigure}{.5\textwidth}
 % \centering
 % \includegraphics[width=.6\linewidth]{Lambda2.pdf}
%\end{subfigure}

\includegraphics[scale=1.05]{picalgLambda.pdf}
  \caption{\footnotesize{The quiver $Q_m$ (on the left) and the modified Brauer tree $\Gamma_m$ (on the right) of $\Lambda_m$.}}
  \label{fig:algLambda}
\end{figure}

\begin{definition}\label{def:R} Let $Q_m'$ be the quiver shown in Figure \ref{fig:algR} on the left. Let $I_m'$ be the two-sided ideal of $\kk Q_m'$  generated by (1) all paths of the form $\gamma_i\gamma_j$ and $\gamma_i'\gamma_j'$, (2) elements $\gamma_k'\gamma_k - \gamma_{k-1}\gamma_{k-1}'$ for $4 \leq k \leq m-1$, and, finally, (3) $\gamma_1\gamma_1' - \gamma_3'\gamma_3$ and $\gamma_2\gamma_2' - \gamma_3'\gamma_3$. Let $\mathcal{R}_m = \kk Q_m'/I_m'$. In other words, $\mathcal{R}_m$ is the trivial extension algebra of the $D_m$ Dynkin diagram with alternating orientation. In terms of the previous section, $\mathcal{R}_m$ is the Double Edge algebra corresponding to the line formed by $m-2$ regular edges and a double edge at the end, as Figure \ref{fig:algR} shows on the right. By $R_i = e_i\mathcal{R}$ for $i=1, \dots, m$ we denote the indecomposable projective $\mathcal{R}_m-$modules.
\end{definition}

 \begin{figure}[H]
 \begin{subfigure}{.5\textwidth}
  \centering
  \includegraphics[width=.9\linewidth]{Ralg1.pdf}
\end{subfigure}%
\begin{subfigure}{.5\textwidth}
  \centering
  \includegraphics[width=.9\linewidth]{Ralg2.pdf}
\end{subfigure}
  \caption{\footnotesize{The quiver $Q_m'$ (on the left) and the modified Brauer tree (on the right) of $\mathcal{R}_m$.}}
  \label{fig:algR}
\end{figure}

Usually we will fix $m \geq 4$ and simply write $\Lambda$ and $\mathcal{R}$ for $\Lambda_m$ and $\mathcal{R}_m$ respectively. However, sometimes the parity of $m$ will be important, in which case we will keep the subscripts.

\begin{lemma}\label{thm:equiv} $\Lambda$ and $\mathcal{R}$ are derived equivalent. 
\end{lemma}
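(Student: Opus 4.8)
The plan is to realize the derived equivalence as a composite of tilting mutations, using the combinatorial machinery of Section~\ref{sec:mutmodel}. Both algebras are of Double Edge type: by Definition~\ref{def:Lambda}, $\Lambda$ corresponds to the star $\Gamma_m$ (a single central vertex carrying $m-2$ regular edges together with one double edge), while by Definition~\ref{def:R}, $\mathcal{R}$ corresponds to the line of $m-2$ regular edges terminated by a double edge. Recall that for any left mutation the complex $\mu_j^+(A)$ is a tilting complex (every silting complex over a symmetric algebra is tilting), so $\End_{\D^b(A)}(\mu_j^+(A))$ is derived equivalent to $A$; and the rules of Section~\ref{sec:mutmodel} tell us precisely how $\mu_j^+$ transforms the associated modified Brauer tree up to Morita equivalence. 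Consequently it suffices to exhibit a finite sequence of such moves carrying the star $\Gamma_m$ to the line of $\mathcal{R}$: the corresponding chain of mutations then yields the desired derived equivalence $\D^b(\Lambda)\simeq\D^b(\mathcal{R})$.

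The key reduction I would make is to treat the double edge as a \emph{distinguished pendant edge} and to perform the unfolding only on the regular part, never mutating at the double edge itself. Concretely, I would use exclusively mutations of type (1) (ordinary Kauer moves on regular edges both of whose cyclic successors are regular) and of type (3) (the move at the regular edge that is immediately followed by the double edge); both of these keep us within the Double Edge class, whereas mutating \emph{at} the double edge (type (2)) would produce a Triple Tree and is therefore avoided. Under these moves the regular edges $3,\dots,m$ behave exactly as the edges of an ordinary Brauer tree with $m-2$ edges, and the problem collapses to the classical fact that a star and a line Brauer tree with the same number of edges are connected by a sequence of Kauer moves. Unfolding the $m-2$ regular edges of $\Gamma_m$ into a line in this way leaves the central vertex $\mathscr{X}$ as one endpoint, with the double edge still hanging off it at that end — which is exactly the modified Brauer tree of $\mathcal{R}$.

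The main obstacle is bookkeeping rather than anything conceptual: one must track the position of the double edge and the labels of the regular edges through the whole sequence, and verify that the terminal configuration is the line of Definition~\ref{def:R} on the nose (including the double edge sitting at the correct end), so that the composite of the $\End_{\D^b}(\mu^+_{\bullet})$'s is Morita equivalent to $\mathcal{R}$ and not merely to some other Double Edge algebra. A completely independent, and arguably cleaner, route would bypass the explicit sequence altogether: both $\Lambda$ and $\mathcal{R}$ are symmetric representation-finite algebras of type $(D_m,1,1)$ in the sense of Asashiba \cite{A}, and since $m\geq 4$ and the frequency is $1$ we are never in the exceptional $(D_{3m},\tfrac13,1)$, $\charr\kk=2$ case; hence algebras of this common type are automatically derived equivalent. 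I would present the mutation-based argument as the primary proof, as it is self-contained within the framework of Section~\ref{sec:mutmodel} and produces an explicit equivalence, and note the Asashiba route as a shortcut.
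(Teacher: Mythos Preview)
Your approach is correct and essentially the same as the paper's: both realize the derived equivalence via a sequence of tilting mutations that stays within the Double Edge class and unfolds the star into the line (the paper happens to go in the reverse direction, applying $(\mu_4^-)\circ(\mu_5^-)^2\circ\cdots\circ(\mu_m^-)^{m-3}$ to $\mathcal{R}$ to obtain a tilting complex with endomorphism ring $\Lambda$). One point worth noting is that the paper does not stop at existence: it writes down the resulting tilting complex $U=\bigoplus U_i$ explicitly and fixes a specific equivalence $F\colon \K^b(\proj-\Lambda)\to\K^b(\proj-\mathcal{R})$ on generators and arrows, because this particular $F$ is used immediately afterwards (Lemma~\ref{thm:twonLambda}) to transport the spherical twists $T_i$ to $\Lambda$---so while your sketch proves the lemma, you would still need to carry out that explicit bookkeeping to continue.
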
 

\begin{proof} We are going to construct a particular derived equivalence between $\Lambda$ and $\mathcal{R}$ that we will use later. Consider $\mathcal{R} = \bigoplus_{i=1}^m R_i$ as a tilting complex over itself and apply a sequence of mutations ${\mu = (\mu_4^-) \circ (\mu_5^-)^2 \circ\dots\circ(\mu_m^-)^{m-3}}$. Let ${U = \mu(\bigoplus_{i=1}^m R_i)}$. In the light of the previous section, it is easy to see that ${\End_{D^b(\mathcal{R})}(U) \cong \Lambda}$ and ${U = \bigoplus_{i=1}^m U_i}$, where 
$$ U_i = \begin{cases} \underline{R_i}, & \quad i = 1,2,3 \\ \underline{R_3} \xrightarrow{\gamma_3'} R_4 \xrightarrow{} \dots \xrightarrow{} R_{i-1} \xrightarrow{\gamma_{i-1}'} R_i, &  \quad 4 \leq i \leq m \end{cases} $$

Here the underlined terms are placed in degree $0$. Let $ \mathcal{P}_{\Lambda}$ be the category of finitely generated projective $\Lambda$-modules. Recall that by $\add U$ we denote the smallest subcategory of $\K^b(\proj-\mathcal{R})$ containing $U$, closed under direct summands, finite direct sums and isomorphisms. Now define an equivalence ${S \colon \mathcal{P}_{\Lambda} \to \add U}$ in the following way 
$$S(P_i) = U_i, \quad 1 \leq i \leq m $$
    $$S(\beta_i) = (Id, \dots, Id, 0)$$ 
    $$S(\alpha_i) = \gamma_i, \quad i = 1,2$$ 
    $$S(\delta_i) = \gamma_i', \quad i=1,2$$
Then using the results of \cite{R}, we can extend $S$ to an equivalence ${F \colon \K^b(\proj-\Lambda) \to \K^b(\proj-\mathcal{R})}$. Note that, given a tilting complex $U$, defining an equivalence between $\K^b(\proj-\Lambda)$ and $\K^b(\proj-\mathcal{R})$ is the same as fixing an isomorphism ${\End_{D^b(\mathcal{R})}(U) \cong \Lambda}$. 
\end{proof} 

\subsection{The Picard group of \texorpdfstring{$\Lambda$}{Lg}} 
Now we are going to describe the classical {\bf Picard group} $\Pic(\Lambda)$ of $\Lambda$, i.e. the group of isomorphism classes of invertible $\Lambda- \Lambda$-bimodules. Equivalently, it can be defined as the group of Morita autoequivalences of $\Lambda$ modulo natural isomorphisms. Since $\Lambda$ is basic, $\Pic(\Lambda)$ coincides with the group of outer automorphisms $\Out(\Lambda) = \Aut(\Lambda)/\Inn(\Lambda)$ (see \cite{B}, Proposition 3.8). The group $\Out(\Lambda)$ can be naturally considered as a subgroup of $\TrPic(\Lambda)$. We will use the same letters to denote an automorphism of an algebra and the induced standard autoequivalence.

\begin{theorem}\label{thm:pic} $$\Pic(\Lambda_m) \cong \Out(\Lambda_m) \cong k^* \times  \Z/2\Z$$
 \end{theorem}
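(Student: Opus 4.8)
The plan is to use the identification $\Pic(\Lambda)=\Out(\Lambda)$ already recorded in the text (\cite{B}, Prop. 3.8), so that it suffices to compute $\Out(\Lambda)=\Aut(\Lambda)/\Inn(\Lambda)$. I would organize everything around the standard homomorphism $\rho\colon \Out(\Lambda)\to \Aut(Q_m,I_m)$ to the group of admissible automorphisms of the Gabriel quiver. Concretely, every $\phi\in\Aut(\Lambda)$ sends the distinguished complete set of primitive orthogonal idempotents $\{e_1,\dots,e_m\}$ to another such set, and any two such sets are conjugate; hence after composing with a suitable inner automorphism we may assume $\phi$ permutes the $e_i$. This permutation respects $\rad\Lambda/\rad^2\Lambda$ and the relations, so it is an automorphism of $(Q_m,I_m)$, and since conjugate idempotents carry the same label, $\rho$ is well defined on $\Out(\Lambda)$.

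Next I would determine $\Aut(Q_m,I_m)$. Any admissible automorphism preserves the orientation of $Q_m$, which rigidifies every vertex except the pair $1,2$ attached to the double edge (these are the only two that can be interchanged, and then only with each other); hence $\Aut(Q_m,I_m)\cong\Z/2\Z$, generated by the involution $\tau$ that exchanges the two halves of the double edge, i.e. $\alpha_1\leftrightarrow\alpha_2$ and $\delta_1\leftrightarrow\delta_2$. This $\tau$ is realized by an honest algebra automorphism: it permutes the length-$m$ relations and sends $\alpha_1\delta_1-\alpha_2\delta_2$ to its negative, so it preserves $I_m$, and it lifts to an algebra involution so that $\tau^2=e$. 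Since $\Lambda$ is basic the projectives are pairwise non-isomorphic, so $P_1\not\cong P_2$ and no inner automorphism can swap $e_1,e_2$; thus the class of $\tau$ is non-trivial of order two. Consequently $\rho$ is surjective and we obtain a short exact sequence $1\to K\to \Out(\Lambda)\to \Z/2\Z\to 1$ with $K=\ker\rho$.

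The heart of the argument is the computation of $K$, the vertex-fixing automorphisms modulo inner ones. I would first reduce to the diagonal case: writing $\phi(a)=c_a a+(\text{higher-length terms})$ with $c_a\in k^*$ for each arrow $a$ (the linear part is diagonal as $Q_m$ has no parallel arrows), I claim that, modulo inner automorphisms, one may take $\phi(a)=c_a a$. Here I would exploit that $I_m$ is homogeneous for the path-length grading — both $\alpha_1\delta_1-\alpha_2\delta_2$ and the length-$m$ relations are homogeneous, so $\Lambda$ is graded — and run a filtration/lifting argument along the radical series, successively conjugating away the higher-order corrections by unipotent units $1+n$ with $n\in\rad\Lambda$. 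I expect this step, namely showing that the unipotent part of a vertex-fixing automorphism is always inner, to be the main obstacle; however, the scarcity of nonzero long paths (everything of length $m$ vanishes) sharply limits the possible corrections, so the argument should be routine rather than deep.

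It then remains to compare the diagonal automorphisms with the inner diagonal ones. Conjugation by a diagonal unit $\sum_i \lambda_i e_i$ rescales each arrow $a\colon i\to j$ by $\lambda_i\lambda_j^{-1}$, so two diagonal automorphisms are inner-equivalent exactly when their scaling tuples $(c_a)$ differ by such a coboundary; the residual invariants are the products of the $c_a$ around the independent oriented cycles of $Q_m$. There are exactly two such cycles, one through vertex $1$ and one through vertex $2$, giving a priori a torus $(k^*)^2$, and the single relation $\alpha_1\delta_1=\alpha_2\delta_2$ forces the two cycle-products to agree, cutting this down to $K\cong k^*$. Finally I would check the extension is a central, split direct product: $\tau$ interchanges the two cycles and therefore fixes the surviving subtorus $\{p_1=p_2\}$ pointwise, so it acts trivially on $K$ by conjugation, while $\tau^2=e$; hence $\Out(\Lambda_m)\cong k^*\times\Z/2\Z$, as claimed.
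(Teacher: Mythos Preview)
Your proposal is correct and follows essentially the same route as the paper, though organised a bit more abstractly. The paper also reduces to automorphisms that permute the distinguished idempotents (via the Wedderburn--Malcev decomposition $\Lambda=S\oplus\Rad\Lambda$ and the identity $\Out(\Lambda)=\Aut^S(\Lambda)/(\Aut^S(\Lambda)\cap\Inn(\Lambda))$), identifies the only nontrivial quiver symmetry as the swap $e_1\leftrightarrow e_2$, and then computes the residual torus by exactly your cycle-product invariant $a_1c_1b_m\cdots b_4$. Where you package things as a short exact sequence $1\to K\to\Out(\Lambda)\to\Z/2\Z\to1$ and a coboundary argument, the paper writes down an explicit homomorphism $k^*\times\Z/2\Z\to\Out(\Lambda)$, $(a,\epsilon)\mapsto[\tau^{\Lambda,\epsilon}\circ f_a]$, and checks injectivity and surjectivity directly; the content is the same.

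One remark that simplifies your write-up: the step you flag as the ``main obstacle'' --- conjugating away higher-order corrections so that a vertex-fixing automorphism becomes diagonal on arrows --- is vacuous here. For every arrow $a\colon i\to j$ in $Q_m$ one has $\dim_\kk e_i\Lambda e_j=1$: the underlying oriented cycle through $3,1,m,m-1,\dots,4$ (resp.\ through $2$) has length $m-1$, so any path from $i$ to $j$ of length $>1$ would have length $\ge m$ and hence lie in $I_m$. Thus any $\phi$ fixing the $e_i$ is automatically of the form $\phi(a)=c_a\,a$ with no higher terms, and the grading/unipotent argument is not needed. The paper uses this implicitly when it writes $f(\beta_i)=b_i\beta_i$, etc., without further comment.
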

  \begin{proof} Let $S = \langle e_1, e_2, \dots, e_m \rangle_k $ be the linear span of the primitive idempotents of $\Lambda$. Then $\Lambda$ admits a Wedderbern-Malcev decomposition ${\Lambda = S \oplus \Rad(\Lambda)}$, where $\Rad(\Lambda)$ is the Jacobson radical of $\Lambda$. The group $\Out(\Lambda)$ coincides with the group $\Aut^S(\Lambda)/\Aut^S(\Lambda)\cap \Inn(\Lambda)$, where ${\Aut^S(\Lambda) = \{ f \in \Aut(\Lambda) | f(S) \subseteq S \} }$. Let us first describe all elements of $\Aut^S(\Lambda)$. It is easy to see that any $f \in \Aut^S(\Lambda)$ sends primitive idempotents to primitive idempotents. Let $e_{f(i)} = f(e_i)$ be the image of $e_i$ under $f$. Note that $e_{f(i)}\Lambda e_{f(j)} \subset \Rad(\Lambda)^2$ if and only if there are no arrows from $f(i)$ to $f(j)$ in the quiver of $\Lambda$. This clearly implies that $f$ either acts trivially on $S$, fixing all idempotents, or interchanges $e_1$ and $e_2$. Hence for every $f \in \Aut^S(\Lambda)$ one of the following two options holds
  
  \begin{enumerate}
      \item[(1)] $f(\beta_i) = b_i \beta_i$, where $i = 4, \dots, m$, $f(\alpha_i) = a_i \alpha_i$, $f(\delta_i) = c_i \delta_i$, where $i = 1,2$. 
      
      \item[(2)] $f(\beta_i) = b_i \beta_i$, where $i = 4, \dots, m$, $f(\alpha_i) = a_i \alpha_{3-i}, f(\delta_i) = c_i \delta_{3-i}$, where $i = 1,2$
  \end{enumerate}
    
  \noindent for some elements $b_i, a_i, c_i \in k^*$, where $a_1c_1 = a_2c_2$. Clearly, an automorphism is uniquely determined by such data. In other words, elements of $\Aut^S(\Lambda)$ are parameterized by  $\Z/2\Z \times (k^*)^{m}$. Next we shall see which of the values of these parameters correspond to inner automorphisms. Let $g \in \Inn(\Lambda)$ be an inner automorphism and $\lambda \in \Lambda^*$ such that $g(x) = \lambda^{-1}x\lambda$ for every $x \in \Lambda$. For every $\lambda \in \Lambda^*$ we can write $\lambda = \sum_{i=1}^m d_i e_i + r$ for some $r \in \Rad(\Lambda)$ and $d_i \in k^*$. Then, substituting all arrows for $x$, one has 
  $$g(\beta_i) = d_id_{i-1}^{-1}\beta_i \text{ for every } i = 4, \dots, m $$ 
  $$g(\alpha_i) = d_3d_i^{-1} \alpha_i, g(\delta_i) = d_id_m^{-1} \delta_i, \text{ where } i = 1,2$$
  In other words, among automorphisms that preserve $S$, inner automorphisms are precisely those fixing all idempotents and for which $a_1c_1b_mb_{m-1}\dots b_4 = 1$ in terms of the condition $(1)$ above. We shall make use of this criterion several times throughout the proof.

Let $f_a \in \Aut^S(\Lambda)$ be the automorphism that fixes all arrows except for $\alpha_1$ and $\alpha_2$, which are sent to $a \alpha_1$ and $a \alpha_2$ respectively for some $a \in k^*$. Let $\tau^{\Lambda} \in \Aut^S(\Lambda)$ be the automorphism that interchanges $e_1$ and $e_2$ (with all constants $b_i, a_i, c_i$ being equal to 1). We can now define a group homomorphism $F: k^* \times \Z/2\Z \to \Aut^S(\Lambda)/\Aut^S(\Lambda)\cap \Inn(\Lambda)$ by setting ${F((a,0)) = [f_a]}$ and ${F((a,1)) = [\tau^{\Lambda} \circ f_a]}$. It is easy to check that it is indeed a homomorphism. Namely, $\tau^{\Lambda} \circ f_a = f_a \circ \tau^{\Lambda}$ and $f_a \circ f_a' = f_{aa'}$ hold not only modulo inner automorphisms, but even in $\Aut^S(\Lambda)$. Injectivity of $F$ is also clear and follows directly from the criterion for inner automorphisms we have obtained earlier. Indeed, if $F((a,x))$ is inner, we have $x = 0$, because inner automorphisms fix all idempotents, and we are bound to have $a = 1$ by the condition $a_1c_1b_mb_{m-1}\dots b_4 = 1$ satisfied for inner automorphisms. Hence, $(a,x) = (1,0)$.

Thus, it remains it show that $F$ is surjective. Let $f \in \Aut^S(\Lambda)$ be an automorphism. We aim to show that $[f]$ belongs to $\Imm F$. Applying $\tau^{\Lambda}$ if needed, we can assume without loss of generality that $f$ fixes all $e_i$. Let $b_i, a_i, c_i$ be the elements of $k^*$ defining $f$ as in the discussion above (condition (1)). It is sufficient to find an element $a\in k^*$ such that $f_a \circ f$ is an inner automorphism. But this is again clear from the criterion. Indeed, simply set $a = (a_1c_1b_m \dots b_4)^{-1}$ and we are done.
  \end{proof} 

By $\Pic_0(\Lambda)$ we will denote the subgroup of $\TrPic(\Lambda)$ generated by $F \in \TrPic(\Lambda)$ such that $F(P_i) \cong P_i$ in $\K^b(\proj-\Lambda)$ for every $i=1, \dots, m$. One can see that $\Pic_0(\Lambda) \cong \kk^*$. Every element of $\Pic_0(\Lambda)$ is of the form $ _{\varphi}\Lambda \otimes_{\Lambda} -$, where $\varphi$ is an automorphism of $\Lambda$ which fixes every primitive idempotent $e_i$ and $_{\varphi}\Lambda$ denotes the $\Lambda$-bimodule which is regular regarded as a right module and has its left multiplication twisted by $\varphi$, i.e. $l \cdot t = \varphi(l)m$ for any $t \in {}_\varphi \Lambda$, $l \in \Lambda$. 
It is not difficult to see that $\Pic_0(\mathcal{R}_m) \cong \Pic_0(\Lambda_m) \cong k^*$ and, using the same arguments as in the proof of Theorem \ref{thm:pic}, $\Pic(\mathcal{R}_m) \cong \Pic(\Lambda_m) \cong \kk^* \times \Z/2\Z$ if $m>4$. However, this is not true for $\Pic(\mathcal{R}_4)$. One can show that $\Pic(\mathcal{R}_4) \cong \kk^* \times S_3$. This last fact will not be used in the proof of the main result.

\subsection{Spherical twists on \texorpdfstring{$\Lambda$}{Lg}} 
Note that the indecomposable projective modules $R_i = e_i\mathcal{R}$ over $\mathcal{R}$ are 0-spherical objects of $\D^b(\mathcal{R})$ (see \cite{ST} for the original definition and \cite{NV} for more details on this case). Namely, $\End^*_{D^b(\mathcal{R})}(R_i) \cong \kk[t]/(t^2)$ as graded $\kk-$algebras, where $\deg(t) = 0$, and $R_i$ is 0-Calabi-Yau for every $i=1,\dots,m$. Since $R_i$ is projective, the first condition simply means that $\End_{\mathcal{R}}(R_i) \cong \kk[t]/(t^2)$, which can be easily observed from the quiver of $\mathcal{R}$. The second condition means that there is a functorial isomorphism ${\Hom_{\D^b(\mathcal{R})}^*(R_i, -) \cong \Hom^*_{\D^b(\mathcal{R})}(-,R_i)^\vee}$, where $(-)^\vee$ denotes the $\kk$-linear dual. This follows from the fact that $\mathcal{R}$ is symmetric (see, for instance, \cite{R3}). Hence, to every $R_i$, as to any spherical object of a derived category, we can associate an autoequivalence of a particular form, namely, the spherical twist along $R_i$.

\begin{definition} Let $T_i \colon \D^b(\mathcal{R}) \to \D^b(\mathcal{R})$ be the {\bf spherical twist} functor associated to the spherical object $R_i = e_i\mathcal{R}$. In other words, $T_i$ is the autoeqivalence $\D^b(\mathcal{R})$ given by the formula 
$$T_i(X) = cone(R_i \otimes \RHom(R_i,X) \xrightarrow{ev} X).$$ 

 Here by $\RHom(X,-)$ we denote the standard Hom-complex functor taking values in the derived category of $\kk-$vector spaces and by $- \otimes X$ its left adjoint. 

\end{definition}

\begin{remark}
  In general, to define spherical twists as actual functors of triangulated categories, a dg-enhancement \cite{BK} is required (for details see, for instance, \cite{AL}). Here we use the fact that the bounded derived category of an abelian category always has a dg-enhancement.  
Equivalently, in our case we can define $T_i = X_i \otimes_{\mathcal{R}} -$, where $X_i = cone (R_i \otimes_{\kk} \Hom_\kk(R_i,k) \xrightarrow{m} \mathcal{R})$, ${m(ae_i\otimes e_ib) = ab}$. One can show that $X_i$ is a two-sided tilting complex, thus, $T_i$ defines an element of $\TrPic(\mathcal{R})$. 
\end{remark}

Let $F \colon \K^b(\proj-\Lambda) \to \K^b(\proj-\mathcal{R})$ be the equivalence we constructed in the proof of Lemma \ref{thm:equiv}. Let $\overline{F}$ be an equivalence quasi-inverse to $F$. Then we can define an isomorphism of groups ${F^* \colon \TrPic(\mathcal{R}) \to \TrPic(\Lambda)}$ sending $\varphi \in \TrPic(\mathcal{R})$ to $\overline{F} \circ \varphi \circ F  \in \TrPic(\Lambda)$. For $i=1, \dots, m$, we will denote the autoequivalences $F^*(T_i)$ by $t_i$ and refer to them as {\bf spherical twists on $\Lambda$}. 
\begin{lemma}\label{thm:twonLambda} The spherical twists $t_i$ on $\Lambda$ act on indecomposable projective $\Lambda$-modules in the following way. If $i \geq 4$, 
$$t_i(P_j) =
\begin{cases}
P_j, &\text{ if } j \neq i, i-1\\
P_{j-1},  &\text{ if } j=i\\
(P_i \xrightarrow{\beta_i} P_{i-1} \xrightarrow{soc} P_{i-1})[2],  &\text{ if } j = i-1
\end{cases}
$$ 
If $i=1,2$:

$$
t_i(P_j) =
\begin{cases}
P_j, & \text{ if } j = 3-i\\
P_j[1], & \text{ if } j=i\\
(P_i \xrightarrow{\delta_i\beta_m \dots \beta_{j+1}} P_j)[1], & \text{ if } j \geq 3
\end{cases}
$$ 

Finally, 

$$
t_3(P_j) =
\begin{cases}
(P_3 \xrightarrow{\alpha_1\delta_1\beta_m \dots \beta_{j+1}} P_j)[1], & \text{ if } j \neq 3\\
P_3[1], &\text{ if }  j=3\\
\end{cases}
$$ \end{lemma}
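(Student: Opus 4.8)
The computation rests on the identity $t_i = \overline F\circ T_i\circ F$, so that $t_i(P_j) = \overline F\big(T_i(U_j)\big)$ with $U_j = F(P_j)$ given explicitly in Lemma~\ref{thm:equiv}. The cleanest way to organise this is to use that spherical twists are natural with respect to triangulated equivalences: since $T_i$ is the twist along the $0$-spherical object $R_i$ and $F$ is an equivalence, $t_i = \overline F\, T_{R_i}\, F$ is the spherical twist $T_{Q_i}$ along the object $Q_i := \overline F(R_i)\in\D^b(\Lambda)$, which is again $0$-spherical (see \cite{ST}). This lets me carry out the entire computation inside $\D^b(\Lambda)$, avoiding any further reference to $\overline F$. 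One could equivalently compute $T_i(U_j)$ directly in $\D^b(\mathcal{R})$ and then apply $\overline F$; the two routes involve the same essential calculation.

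First I would pin down $Q_i$. From $U_1 = R_1$, $U_2 = R_2$, $U_3 = R_3$ one reads off $Q_1 = P_1$, $Q_2 = P_2$, $Q_3 = P_3$. For $i\ge 4$ the structure morphism $S(\beta_i)\colon U_i\to U_{i-1}$ of Lemma~\ref{thm:equiv}, being an isomorphism on the shared terms $R_3,\dots,R_{i-1}$, has mapping cone quasi-isomorphic to $R_i$ in a single degree; applying the triangulated functor $\overline F$ and using $\overline F(U_k) = P_k$, I obtain that $Q_i$ is, up to shift, the two-term complex $(P_i\xrightarrow{\beta_i}P_{i-1})$.

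With $Q_i$ in hand the task is to evaluate $T_{Q_i}(P_j) = cone\big(Q_i\otimes\RHom_\Lambda(Q_i,P_j)\xrightarrow{ev} P_j\big)$ for every $j$, which reduces to computing the graded spaces $\Hom_{\D^b(\Lambda)}(Q_i,P_j[\ell])$ from the radical series of the $P_j$, i.e.\ from which paths in $Q_m$ survive modulo $I_m$. For $i=1,2,3$ the object $Q_i = P_i$ is a single projective, so $\RHom_\Lambda(P_i,P_j)$ is concentrated in degree $0$ and the cone is immediate: $\End_\Lambda(P_i)\cong\kk[t]/(t^2)$ forces $T_{P_i}(P_i) = P_i[1]$, a one-dimensional $\Hom_\Lambda(P_i,P_j)$ generated by the indicated path gives the two-term cone $(P_i\to P_j)[1]$, and the vanishing $\Hom_\Lambda(P_i,P_{3-i}) = 0$ yields the fixed case. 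For $i\ge 4$ the summands $P_j$ with $j\ne i,i-1$ are $\RHom$-orthogonal to $Q_i$ and are therefore fixed, so only $j=i$ and $j=i-1$ require the full cone computation.

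The main obstacle is precisely this last computation for $i\ge 4$: because $Q_i$ is a two-term complex, $\RHom_\Lambda(Q_i,P_j)$ has contributions in more than one degree, and the raw mapping cone must be reduced to its minimal form while correctly tracking the evaluation map, the shifts, and the signs coming from the differential $\beta_i$ of $Q_i$. Matching the outcome with the claimed $P_{i-1}$ for $j=i$ and the length-three complex $(P_i\xrightarrow{\beta_i}P_{i-1}\xrightarrow{soc}P_{i-1})[2]$ for $j=i-1$ is where the care is needed; identifying the long paths $\delta_i\beta_m\cdots\beta_{j+1}$ and $\alpha_1\delta_1\beta_m\cdots\beta_{j+1}$ as the generators of the relevant $\Hom$-spaces in the cases $i\le 3$ is a similar but easier check. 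Once the Hom-spaces and the complexes $Q_i$ are fixed, all remaining verifications are finite and explicit.
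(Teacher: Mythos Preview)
Your approach is correct and genuinely different from the paper's. The paper verifies each formula by working on the $\mathcal{R}$-side: it computes $T_i(U_j)$ explicitly and, for the nontrivial case $i\ge 4$, $j=i-1$, separately computes $F$ applied to the claimed three-term complex as the totalisation of a bicomplex, then matches the two. You instead transport the spherical object to $\D^b(\Lambda)$ once and for all, identifying $t_i$ with the twist $T_{Q_i}$ along $Q_i=\overline F(R_i)$, and then compute everything via the cone formula and the Hom-combinatorics of $\Lambda$. The payoff of your route is that no bicomplex totalisation is needed and the cases $i\le 3$ become one-line cone computations since $Q_i=P_i$; the orthogonality $\RHom_\Lambda(Q_i,P_j)=0$ for $i\ge 4$, $j\notin\{i,i-1\}$ reduces to checking that precomposition with $\beta_i$ gives an isomorphism $\Hom_\Lambda(P_{i-1},P_j)\to\Hom_\Lambda(P_i,P_j)$, which is a clean path-algebra check. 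The paper's route, on the other hand, keeps the spherical twists where they are originally defined and never needs to identify $Q_i$ or argue that spherical twists are natural under equivalences. The genuinely delicate point---the case $i\ge 4$, $j=i-1$---is of comparable difficulty either way: in your setup one must reduce the cone of $Q_i\otimes\RHom(Q_i,P_{i-1})\to P_{i-1}$, where the two-dimensional $\End_\Lambda(P_{i-1})$ and the relation $soc\circ\beta_i=0$ are what produce the length-three complex with the correct shift; this is exactly the work hidden in the paper's totalisation.
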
 

\begin{proof} We need to check that $F(t_i(P_j)) = T_i(F(P_j)) = T_i(U_j)$ for every $j$ and $i$ ranging from $1$ to $m$, $t_i(P_j)$ as given above and $U_j$ as defined in the proof of Lemma \ref{thm:equiv}. First let $i \geq 4$. We have $T_i(U_j) = U_j$ for every $j \neq i, i-1$. Hence, indeed, $t_i(P_j) = P_j$ if $j \neq i, i-1$. Let $j = i$. It is easy to check that $T_i(U_i) = U_{i-1}$, so $t_i(P_i) = P_{i-1}$. It remains to consider the case $j = i-1$. We have to show that $$F((P_i \xrightarrow{\beta_i} P_{i-1} \xrightarrow{soc} P_{i-1})[2]) = T_i(U_{i-1})$$

The right-hand side is easy to compute. Here we get $$T_i(U_{i-1}) = (R_3 \xrightarrow{\gamma_3'} R_4 \xrightarrow{} \dots \xrightarrow{} R_{i-3} \xrightarrow{\begin{pmatrix} \gamma_{i-3}' \\ 0 \end{pmatrix}} R_{i-2}\oplus R_i \xrightarrow{\begin{pmatrix}\gamma_{i-2}' & \gamma_i \end{pmatrix}} R_{i-1})$$ 

On the other hand, $F((P_i \xrightarrow{} P_{i-1} \xrightarrow{soc} P_{i-1})[2])$ is the totalization of the twisted complex shown below, where the underlined module is in bidegree $(0,0)$ (see \cite{R2} for details on the construction and the justification). It is easy to see that this results in the same answer.
\begin{center}
\begin{tikzcd}
R_i \arrow[bend left=20,swap, dotted]{drrr}{\gamma_{i-1}}  &
 & \\
R_{i-1} \arrow[u, "\gamma_{i-1}'"] \arrow[r, "(-1)^{i} id"] &  R_{i-1}\arrow[rr, "(-1)^i \gamma_{i-1}'\gamma_{i-1}"] && R_{i-1} \\ 
R_{i-2} \arrow[u,"\gamma_{i-2}'"] \arrow[r, "(-1)^{i-1} id"] & R_{i-2}  \arrow[u,"\gamma_{i-2}'"] \arrow[rr, "0"]  &&  R_{i-2} \arrow[u,"\gamma_{i-2}'"] \\ 
\arrow[u, no head, no tail, dotted]  & \arrow[u, no head, no tail, dotted] &&  \arrow[u, no head, no tail, dotted] \\ R_{3} \arrow[u,"\gamma_{3}'"] \arrow[r, " id"] & R_{3}  \arrow[u,"\gamma_{3}'"] \arrow[rr, "0"]  &&  \underline{R_{3}} \arrow[u,"\gamma_{3}'"] 
\end{tikzcd}
\end{center}

Next let $i=1$ or $i=2$. We have $t_i(P_i) = P_i[1]$, $t_1(P_2) = P_2$, $t_2(P_1) = P_1$. Now consider the case $j \geq 3$. We have ${T_i(U_j) = (R_i \xrightarrow{\gamma_i'} R_3 \xrightarrow{\gamma_3'} R_4 \xrightarrow{} \dots \xrightarrow{\gamma_{j-1}'} R_j)[1]}$ and the application of $F$ to ${t_i(P_j) = (P_i \xrightarrow{\delta_i\beta_m \dots \beta_{j+1}} P_j)[1]}$ gives the same complex. 

Finally, let $i= 3$. Since $U_3 = R_3$ and $T_3(R_3) = R_3[1]$, indeed, $t_3(P_3) = P[1]$. Similarly we have $t_3(P_i) = (P_3 \xrightarrow{\alpha_i} P_i)[1]$ for $i=1,2$. Now let $j = 4, \dots, m$. We have ${T_3(U_j) = (R_3 \xrightarrow{soc} R_3 \xrightarrow{\gamma_3'} 
\dots \xrightarrow{\gamma_{j-1}'} R_j)[1]}$. Again, this is exactly the totalization of the bicomplex obtained when applying $F$ to ${(P_3 \xrightarrow{\alpha_1\delta_1\beta_m \dots \beta_{j+1}} P_j)[1]}$. 
  \end{proof}  

Later we will face the challenge to compute various compositions of the twists $t_i$'s modulo $\Pic_0(\Lambda)$ or, in other words, the tilting complexes corresponding to compositions of the twists. This is not so pleasant using just the definition or Lemma \ref{thm:twonLambda}, because in either case one would have to deal with bicomplexes. A more convenient way is to express each of the autoequivalences $t_i$ as a composition of tilting mutations (modulo $\Pic_0(\Lambda)$) and then use this to compute compositions of twists as "long" compositions of mutations. 

 \begin{lemma}\label{thm:twviamut}  \begin{enumerate}
     \item  
     $$t_i(\Lambda) \cong (\mu_i^+)^2(\Lambda), t_i^{-1}(\Lambda) = (\mu_{i-1}^-)^2(\Lambda), \, \text{if } i \geq 4.$$
      \item  $$t_3(\Lambda) \cong  \mu_4^- \circ \dots \mu_m^- \circ \mu_2^- \circ \mu_1^-[1](\Lambda)$$
$${t_3^{-1}(\Lambda) \cong  \mu_1^+ \circ  \mu_2^+ \circ \mu_m^+ \dots \mu_4^+ [-1](\Lambda)}$$
     \item $$t_k(\Lambda) \cong  \mu_{3-k}^- \circ \mu_{3}^- \circ \mu_{3-k}^+ \circ \mu_4^- \circ \dots \circ \mu_m^- \circ \mu_{3-k}^-[1](\Lambda)$$ $${t_k^{-1}(\Lambda) \cong  \mu_{3-k}^+ \circ \mu_m^+ \circ  \mu_{3-k}^- \circ \mu_{m-1}^+ \dots \circ \mu_3^+ \circ \mu_{3-k}^+[-1](\Lambda)}, \, \text{if } k = 1,2.$$
    
 \end{enumerate} \end{lemma}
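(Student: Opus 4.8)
The plan is to prove each identity by computing both sides as objects of $\K^b(\proj-\Lambda)$ and matching their indecomposable summands; since a basic tilting complex is determined up to isomorphism by the multiset of its indecomposable summands, this is enough. The left-hand sides require no work beyond Lemma \ref{thm:twonLambda}, which already lists the summands $t_i(P_j)$ of $t_i(\Lambda)=\bigoplus_{j=1}^m t_i(P_j)$. The right-hand sides are sequences of irreducible mutations, which I would evaluate step by step by means of the combinatorial rules of Section \ref{sec:mutmodel}: each $\mu_j^{\pm}$ both replaces one indecomposable summand by the cone of a minimal approximation and transforms the ambient modified Brauer tree, and by the labelling convention recorded in the remarks of that section the labels of all unmoved edges are preserved, so positions stay well defined along the sequence. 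The global shift $[1]$ (respectively $[-1]$) in parts (2) and (3) matches the overall shift visible in $t_3(P_j)$ and $t_k(P_j)$, a useful first consistency check.

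Part (1) is local and short. As $i\ge 4$ labels a regular pendant edge of the star $\Gamma_m$, the pendant-edge rule gives $\mu_i^+(\Lambda)=\bigoplus_{j\neq i}P_j\oplus(P_i\xrightarrow{\beta_i}P_{i-1})[1]$, with the edge $i$ slid one step along $i-1$. A second application of $\mu_i^+$ at the same position turns this length-two complex into $(P_i\xrightarrow{\beta_i}P_{i-1}\xrightarrow{soc}P_{i-1})[2]$, every other summand being unchanged; this is precisely $\bigoplus_{j\neq i}P_j\oplus t_i(P_{i-1})$ once one notes that $t_i(P_i)=P_{i-1}$ merely reproduces a surviving projective. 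The only thing to check carefully is that the differential produced by the second approximation is the socle map, which follows by reading off the arrows of the intermediate algebra. The inverse identity is obtained by the same calculation with right mutations, or deduced from $\mu_Y^-(\mu_X^+(S))=S$ together with the edge-relabelling, which explains the index shift from $i$ to $i-1$.

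Parts (2) and (3) carry the real content: here the sequences have lengths $m-1$ and $m+1$, and the type of the endomorphism algebra changes along the way, alternating between Double Edge and Triple Tree configurations. At each step one must therefore apply the correct case among the five treated in Section \ref{sec:mutmodel} --- in particular the central-edge cases for the $\mu_3^-$ and $\mu_{3-k}^{\pm}$ mutations of part (3). I would process each composition from right to left, recording after every mutation both the new complex and the updated tree with its labels, until the final tilting complex is reached, and then read off its summands and compare them with the forms $(P_3\xrightarrow{\alpha_1\delta_1\beta_m\cdots\beta_{j+1}}P_j)[1]$ and $(P_k\xrightarrow{\delta_k\beta_m\cdots\beta_{j+1}}P_j)[1]$ dictated by Lemma \ref{thm:twonLambda}. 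The main obstacle is exactly this bookkeeping: one must confirm not only that the underlying graded summands agree, but that the differentials are the asserted paths (up to a nonzero scalar, which is immaterial for an isomorphism of complexes), while correctly threading the double edge and the central triangle through the Triple Tree steps. The inverse identities follow by the mirror computation, or from the observation that reversing a mutation sequence and exchanging $\mu^+\leftrightarrow\mu^-$ inverts the corresponding standard autoequivalence modulo $\Pic_0(\Lambda)$.
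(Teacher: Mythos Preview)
Your proposal is correct and follows essentially the same approach as the paper: compute the mutation sequences step by step, tracking both the indecomposable summands and the evolving modified Brauer tree with its labels, then compare with the summands $t_i(P_j)$ supplied by Lemma~\ref{thm:twonLambda}. The paper likewise treats only the ``positive'' half in detail and leaves the inverse identities to the analogous mirror computation.
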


\begin{proof}  The proof is by direct computation. We will only consider the ``positive'' half of the statement and leave the analogous calculations for $t_i^{-1}$'s to the reader.
\begin{enumerate} 
    \item The tilting complex on the left-hand side of $t_i(\Lambda) \cong (\mu_i^+)^2(\Lambda)$ was already computed in Lemma \ref{thm:twonLambda}, so it remains to compute $(\mu_i^+)^2(\Lambda)$. Whenever we have a sequence of mutations applied to $\Lambda$, each subsequent mutation yields a tilting complex and a modified Brauer tree depicting its endomorphism algebra, as discussed in the previous section. There is a natural one-to-one correspondence between the edges of this modified Brauer tree and the indecomposable direct summands of the corresponding tilting complex. Hence, a step-by-step computation of the tilting complex corresponding to a sequence of mutations may be displayed as a sequence of modified Brauer trees whose edges are labeled with complexes. Taking the direct sum of the complexes written on all edges of a modified Brauer tree gives a tilting complex whose endomorphism algebra is depicted by this tree. Every subsequent complex differs from the previous one in only one summand and every subsequent tree differs from the previous one in only one edge. Below we do this for $(\mu_i^+)^2$ and then for the remaining two cases. The curvy red arrows remind the ordering of the relevant edges. Depending of whether we apply $\mu^+$ or $\mu^-$, the arrow points at or away from the edge along which the edge of the next mutation "slides" in the modified Brauer tree. For $(\mu_i^+)^2$ we get the following sequence. 
     \begin{figure}[H]
  \includegraphics[scale=1.3]{Twist1.pdf}
\end{figure}
\noindent Here on the second step we obtained $P_i \xrightarrow{\beta_i} \underline{P_{i-1}}$ as the cone of the minimal left approximation of $P_i$ with respect to $\add(\bigoplus\nolimits_{j\neq i} P_j)$. On the final step we get $P_i \xrightarrow{\beta_i} P_{i-1} \xrightarrow{soc} \underline{P_{i-1}}$ as the cone of the minimal left $\add (\bigoplus\nolimits_{j\neq i} P_j)$-approximation of $P_i \xrightarrow{\beta_i} \underline{P_{i-1}}$. It is easy to see from Lemma \ref{thm:twonLambda} that $t_i(\Lambda) \cong \bigoplus_{j=1}^m t_i(P_j)$ is exactly the complex shown on the right in the picture above.

    \item Now we need to compute the complex $\mu_4^- \circ \dots \mu_m^- \circ \mu_2^- \circ \mu_1^-[1](\Lambda)$. It is convenient to group this sequence this into 2 steps, first applying $\mu_2^- \circ \mu_1^-$ and then the rest of the sequence. 
    
    \begin{figure}[H]
  \includegraphics[scale=1.3]{Twist2.pdf}
\end{figure}

\noindent Same as in the previous case, by Lemma \ref{thm:twonLambda} this is exactly $t_3(\Lambda)$.
    \item The cases $k=1$ and $k=2$ are absolutely analogues, so we assume $k=1$ and compute ${\mu_{2}^- \circ \mu_{3}^- \circ \mu_{2}^+ \circ \mu_4^- \circ \dots \circ \mu_m^- \circ \mu_{2}^-[1](\Lambda)}$. 
     \begin{figure}[H] 
  \includegraphics[scale=1.2]{Twist3.pdf} 
  \end{figure}
  \noindent Comparing the result to $t_1(\Lambda)$ obtained in Lemma \ref{thm:twonLambda}, we once again see that this is exactly what was claimed. 
\end{enumerate}
\end{proof} 

\begin{remark} When expressing compositions of several spherical twists as compositions of mutations and vice versa, we will have to carefully account for the labeling of summands. For example, while $(\mu_4^+)^2(\Lambda) \cong t_4(\Lambda)$, it is {\bf not} true that $(\mu_4^+)^4(\Lambda) \cong t_4 \circ t_4(\Lambda)$. In fact, $(\mu_4^+)^4$ does not even define an autoequivalence of $D^b(\Lambda)$, i.e. $\End((\mu_4^+)^4(\Lambda))$ is not isomorphic to $\Lambda$. The correct expression of $t_4 \circ t_4(\Lambda)$ via mutations is $(\mu_3^+)^2 \circ (\mu_4^+)^2(\Lambda)$, since $(\mu_4^+)^2$ interchanges the two edges $3$ and $4$. For the same reasons, $t_i^{-1}$ with $i \geq 4$ is $(\mu_{i-1}^-)^{2}$, and not $(\mu_i^-)^{2}$. \end{remark} 
The following technical lemma will be useful in our calculations later. It may be proved using induction on $k$ and expressing the composition of twists as a composition of mutations with the help of Lemma \ref{thm:twviamut}. We leave this to the reader. 

\begin{lemma}\label{twistseq1} Let $\mathcal{F}_k \colon D^b(\Lambda) \to D^b(\Lambda)$, $\mathcal{F}_k = (t_m \circ \dots \circ t_4 \circ t_3 \circ t_1 \circ t_2 \circ t_3)^k[-2k]$. Then $$
\mathcal{F}_k(P_1) = P_1, \mathcal{F}_k(P_2) = P_2, 
\mathcal{F}_k(P_i) = P_{k+i}, \text{ if } 3 \leq i \leq m-k$$ $$
\mathcal{F}_k(P_i) = (P_{i-m+k+2} \xrightarrow{\begin{pmatrix} \beta_{i-m+k+2} \dots \beta_4\alpha_1 \\ \beta_{i-m+k+2} \dots \beta_4\alpha_2  \end{pmatrix}} P_1 \oplus P_2)[1], \text{ if }  m-k+1 \leq i \leq m $$
\end{lemma}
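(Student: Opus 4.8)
The plan is to collapse all the shifts and then induct on $k$. Since $[1]$ is central in $\TrPic(\Lambda)$, I would pull the shifts to the right and write $\mathcal{F}_k = \Phi^k[-2k] = (\Phi[-2])^k = \mathcal{F}_1^k$, where $\Phi = t_m \circ \dots \circ t_4 \circ t_3 \circ t_1 \circ t_2 \circ t_3$ and $\mathcal{F}_1 = \Phi[-2]$. It then suffices to prove the formula for $k=1$ and to show that it propagates under composition with $\mathcal{F}_1$, so that $\mathcal{F}_{k+1} = \mathcal{F}_k \circ \mathcal{F}_1$.

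For the base case I would compute $\mathcal{F}_1(P_j)$ for every $j$. Rather than iterating the twist formulas of Lemma \ref{thm:twonLambda} directly, which forces one to manipulate bicomplexes, I would follow the hint and replace each $t_i$ by its expression as a composition of mutations from Lemma \ref{thm:twviamut}. This turns $\mathcal{F}_1$ into an explicit sequence of roughly $2m$ irreducible mutations of $\Lambda$, which can be evaluated step by step in the modified Brauer tree calculus of Section \ref{sec:mutmodel}, exactly as in the proof of Lemma \ref{thm:twviamut}. The outcome is that $\mathcal{F}_1$ fixes $P_1$ and $P_2$, sends $P_i \mapsto P_{i+1}$ for $3 \le i \le m-1$, and sends $P_m \mapsto (P_3 \xrightarrow{\binom{\alpha_1}{\alpha_2}} P_1 \oplus P_2)[1]$; informally, $\mathcal{F}_1$ rotates the regular edges of the star $\Gamma_m$ by one while fixing the double edge. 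This is the laborious but mechanical part of the argument.

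For the inductive step, assume the formula for $\mathcal{F}_k$ and evaluate $\mathcal{F}_{k+1} = \mathcal{F}_k \circ \mathcal{F}_1$ by feeding the known values $\mathcal{F}_1(P_j)$ into $\mathcal{F}_k$. For $j=1,2$ and for $3 \le j \le m-1$ this is immediate from the induction hypothesis and yields $P_1$, $P_2$ and $P_{(k+1)+j}$ (or the appropriate complex once $j+1$ enters the wrapped range $m-k+1 \le j+1 \le m$), matching the claim after shifting the index ranges by one. The only genuinely nontrivial instance is $j=m$: here I use that $\mathcal{F}_k$ is triangulated, so it carries the cone $(P_3 \xrightarrow{\binom{\alpha_1}{\alpha_2}} P_1 \oplus P_2)[1]$ to the cone of $\mathcal{F}_k\!\left(\binom{\alpha_1}{\alpha_2}\right) \colon \mathcal{F}_k(P_3) \to \mathcal{F}_k(P_1)\oplus \mathcal{F}_k(P_2)$, i.e. (for $k \le m-3$) to $\mathrm{cone}\big(P_{k+3} \to P_1 \oplus P_2\big)[1]$, since $\mathcal{F}_k(P_3)=P_{k+3}$ and $\mathcal{F}_k(P_1 \oplus P_2)=P_1 \oplus P_2$.

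The main obstacle is controlling the induced differential $\mathcal{F}_k(\binom{\alpha_1}{\alpha_2})$ at the level of morphisms, not merely objects. I would argue that it is forced up to isomorphism: being an equivalence, $\mathcal{F}_k$ sends the nonzero arrow $\alpha_i$ to a nonzero element of $\Hom(P_{k+3},P_i)$, and a direct inspection of $\Lambda$ shows this space is one-dimensional, spanned by the path $\beta_{k+3}\cdots\beta_4\alpha_i$ (of length $k+1 < m$, hence outside the length-$m$ relations). Thus $\mathcal{F}_k(\alpha_i) = c_i\,\beta_{k+3}\cdots\beta_4\alpha_i$ for some $c_i \in \kk^*$, and rescaling the summands $P_1,P_2$ by $c_1^{-1},c_2^{-1}$—a chain isomorphism of the cone—brings the differential to precisely the stated matrix $\binom{\beta_{k+3}\cdots\beta_4\alpha_1}{\beta_{k+3}\cdots\beta_4\alpha_2}$. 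Finally I would verify the boundary value $k=m-2$ directly by applying $\mathcal{F}_{m-3}$, for which $\mathcal{F}_{m-3}(P_3)=P_m$ already lies in the wrapped case and the same one-dimensionality of $\Hom(P_m,P_i)$ applies verbatim. Tracking these differentials, rather than the bookkeeping of the index ranges which is automatic, is where the care is needed.
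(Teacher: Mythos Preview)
Your approach is correct and matches the paper's own proof, which also proceeds by induction on $k$ together with the expression of the twists as compositions of mutations from Lemma~\ref{thm:twviamut}; the paper in fact leaves the details to the reader. The one place where you deviate slightly is the inductive step: rather than re-running the mutation calculus on the complex $\mathcal F_k(\Lambda)$ (which keeps track of differentials automatically), you use functoriality of $\mathcal F_k$ on the cone $(P_3\to P_1\oplus P_2)[1]$ and pin down the induced map by observing that $\Hom_\Lambda(P_{k+3},P_i)$ is one-dimensional. This is a clean and legitimate shortcut---the Hom-space really is spanned by the single path $\beta_{k+3}\cdots\beta_4\alpha_i$ since any longer path has length at least $k+m\ge m$ and hence vanishes---and it spares you from redoing the mutation bookkeeping at each stage, at the cost of the small extra rescaling argument. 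Either route is fine here.
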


\begin{remark}
  A convenient way to express the statement of Lemma \ref{twistseq1} above is to draw the star with a double edge and label its edges with indecomposable summands of the tilting complex $\mathcal{F}_k(\Lambda)$, as shown below (see the discussion in the proof of Lemma \ref{thm:twviamut}).  
 \begin{figure}[H]
  \includegraphics[scale=1.9]{TechPic.pdf}
\end{figure}

One should read the picture as follows. When there is a complex written under a brace around several edges, we substitute the labels of these edges for $i$ to get direct summands of the tilting complex. For instance, here we have summands $P_{i+k}$ for $i=3, \dots, m-k$, i.e. $P_{k+3}, \dots, P_{m}$. This tree with this particular labeling of edges is obtained if we express $\mathcal{F}_k$ as a composition of mutations via Lemma \ref{thm:twviamut}. 
  \end{remark}

\subsection{Some relations in \texorpdfstring{$\TrPic(\mathcal{R})$}{Lg}}

So far we have encountered elements of $\TrPic(\mathcal{R})$ of three types, namely, the spherical twists $T_i$, elements of $\Pic(\mathcal{R}) \cong \Out(\mathcal{R})$ and the shifts. Later we will show that in fact the whole group $\TrPic(\mathcal{R})$ is generated by such standard autoequivalences. In this section we will investigate which relations these generators are subject to.

All the information about relations between the spherical twists $\{T_i\}_{i=1}^m$ can be be derived from a somewhat non-trivial result on the faithfulness of braid group actions which is luckily at our disposal. 

\begin{theorem}[Corollary 1, \cite{NV}]\label{thm:faith} The subgroup of $\TrPic(\mathcal{R}_m)$ generated by the twists $\{T_i\}_{i=1}^m$ is isomorphic to the Artin group $B_{D_m}$ of type $D_m$. 
\end{theorem}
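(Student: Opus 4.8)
The plan is to obtain Theorem~\ref{thm:faith} as a direct instance of the general faithfulness result of \cite{NV}, reducing everything to the verification that the collection of projectives $\{R_i\}_{i=1}^m$ forms a $D_m$-configuration of $0$-spherical objects in $\D^b(\mathcal{R}_m)$. Concretely, \cite{NV} establishes that whenever a $\kk$-linear enhanced triangulated category carries a $D_m$-configuration of $0$-spherical objects, the associated spherical twists generate a subgroup of the autoequivalence group isomorphic to $B_{D_m}$; Theorem~\ref{thm:faith} is exactly this statement specialized to $\mathcal{R}_m$ and the twists $T_i$. So the task is only to match hypotheses.

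First I would record the $0$-sphericity of each $R_i$, which is already observed above: since $R_i = e_i\mathcal{R}$ is projective over the self-injective (indeed symmetric) algebra $\mathcal{R}$, one has $\End^*_{\D^b(\mathcal{R})}(R_i) \cong \kk[t]/(t^2)$ with $t$ in degree $0$, and the symmetric structure makes $R_i$ $0$-Calabi--Yau. Next I would verify the linking pattern required of a $D_m$-configuration. Because the $R_i$ are projective, $\Hom^*_{\D^b(\mathcal{R})}(R_i,R_j)$ is concentrated in degree $0$ and equals $e_j\mathcal{R}e_i$, so these spaces can be read off directly from the quiver $Q'_m$ of Definition~\ref{def:R}: $e_j\mathcal{R}e_i$ is one-dimensional precisely when $i$ and $j$ are adjacent vertices of the $D_m$ diagram (with $1,2$ attached to $3$ and $3,4,\dots,m$ forming the tail, matching Definition~\ref{def:braid}) and vanishes otherwise. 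This combinatorial check is the entire content of confirming that $\{R_i\}$ is a $D_m$-configuration, and it is immediate from the defining relations of $\mathcal{R}_m$.

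With the configuration hypothesis in place, the braid relations of type $D_m$ among the twists $T_i$ follow from the standard calculus of spherical twists \cite{ST}: adjacency $\dim\Hom^*(R_i,R_j)=1$ yields $T_iT_jT_i = T_jT_iT_j$, while orthogonality $\Hom^*(R_i,R_j)=0$ yields $T_iT_j = T_jT_i$. This produces a surjection $B_{D_m} \twoheadrightarrow \langle T_i\rangle$. The genuine difficulty — and the step I would expect to be the real obstacle were it not already available — is the injectivity (faithfulness) of this homomorphism. The case of $0$-spherical objects is precisely the delicate one, since the geometric and homological dimension arguments that normally force faithfulness for $n$-spherical collections with $n>0$ degenerate when $n=0$. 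Invoking Corollary~1 of \cite{NV} supplies exactly this faithfulness; combining it with the surjection above gives $\langle T_i\rangle \cong B_{D_m}$ as a subgroup of $\TrPic(\mathcal{R}_m)$, completing the argument.
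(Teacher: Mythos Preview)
Your proposal is correct and matches exactly what the paper does: the paper provides no proof of this statement but simply cites it as Corollary~1 of \cite{NV}, having verified just before the theorem that the projectives $R_i$ are $0$-spherical (which, together with the $D_m$ intersection pattern you read off from $Q'_m$, places us squarely in the setting of \cite{NV}). Your outline of why the hypotheses of \cite{NV} are met is precisely the implicit justification the paper relies on.
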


To establish some other relations in $\TrPic(\mathcal{R})$ we will use the fact that there is a canonical homomorphism from the derived Picard group to the stable Picard group, which we define below. 

\begin{definition}[Definition 3.8, \cite{RZ}]\label{thm:stable} Let $A$ be a finite-dimensional $\kk$-algebra. The {\bf stable Picard group} $\StPic(A)$ of $A$ is the group of isomorphism classes of projective-free $A^{op} \otimes A$-modules inducing stable equivalences of $A$, with the product of the classes of $M$ and $N$ defined to be the projective-free part of $M \otimes_A N$. 
 \end{definition}
 
 \begin{theorem}[Corollary 2.15, \cite{RZ}] There is a canonical homomorphism $S \colon \TrPic(A) \to \StPic(A)$, sending a two-sided tilting complex $X$ to $\Omega^n_{A\otimes A^{op}}C^n$, where $C$ is a bounded complex of $A^{op} \otimes A$-modules, such that it is isomorphic to $X$, $C^i = 0$ for $i > n$, $C^i$ are projective bimodules for $i < n$, and $C^n$ is projective as an $A$-module and projective as an $A^{op}-$module. Moreover, $S$ is injective on $\Pic_0(A)$.   \end{theorem}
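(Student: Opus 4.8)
The plan is to construct $S$ through the stable module category, using the identification of the latter with a singularity category, and then to read off both the explicit formula $X \mapsto \Omega^n_{A\otimes A^{op}}C^n$ and the injectivity on $\Pic_0(A)$. Write $A^e = A^{op}\otimes A$. Since the algebras we care about ($\mathcal{R}$ and $\Lambda$) are symmetric, both $A$ and $A^e$ are self-injective, so by Rickard's theorem the canonical functors induce triangle equivalences $\underline{\mathrm{mod}}\,A \simeq \D_{\mathrm{sg}}(A) := \D^b(A)/\K^b(\proj-A)$ and $\underline{\mathrm{mod}}\,A^e \simeq \D_{\mathrm{sg}}(A^e)$, under which the shift $[1]$ corresponds to the cosyzygy $\Omega^{-1}$. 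The decisive observation is that $X \otimes^{\Li}_A -$ preserves $\K^b(\proj-A)$, because the restriction of $X$ to $A^{op}$ is perfect; hence it descends to an autoequivalence of $\D_{\mathrm{sg}}(A) \simeq \underline{\mathrm{mod}}\,A$, and I would \emph{define} $S(X)$ to be the projective-free bimodule that realises this autoequivalence as a stable equivalence of Morita type.

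To match this with the stated formula I would first produce the complex $C$. Starting from a projective $A^e$-resolution of $X$ and using that $X|_A$ and $X|_{A^{op}}$ are perfect, a truncation yields a bounded complex $C \cong X$ in $\D^b(A^e)$ whose terms $C^i$ with $i<n$ are projective bimodules and whose top term $C^n$ is projective as an $A$-module and as an $A^{op}$-module; in the module case $X = M$ this records the exact sequence $0 \to M \to C^{n-1} \to \dots \to C^n \to 0$ exhibiting $M = \Omega^n_{A^e} C^n$. Because $C^i$ is projective for every $i \neq n$, the complex $C$ is isomorphic in $\D_{\mathrm{sg}}(A^e)$ to the stalk $C^n[-n]$; transporting along $\underline{\mathrm{mod}}\,A^e \simeq \D_{\mathrm{sg}}(A^e)$ and using $[-n]\leftrightarrow\Omega^n$ identifies the image of $X$ with exactly $\Omega^n_{A^e}C^n$. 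A short induction, using that restriction $A^e \to A$ (resp.\ $A^{op}$) sends projectives to projectives and that the relevant short exact sequences split over one side, shows $\Omega^n_{A^e}C^n$ remains projective on each side, so its projective-free part is a genuine element of $\StPic(A)$; independence of the resolution, the truncation, and $n$ is then automatic, since any two admissible choices have the same image in $\D_{\mathrm{sg}}(A^e)$ and hence isomorphic projective-free parts.

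For the homomorphism property I would show that the descent functor $\D^b(A^e) \to \underline{\mathrm{mod}}\,A^e$ intertwines $\otimes^{\Li}_A$ on the source with the projective-free part of $\otimes_A$ on the target. The point is that for bimodules $N, N'$ that are one-sided projective one has $N \otimes^{\Li}_A N' = N \otimes_A N'$ (no higher $\mathrm{Tor}$), so the composite of the two stable equivalences of Morita type attached to $X$ and $Y$ is the one attached to $X \otimes^{\Li}_A Y$; since a stable equivalence of Morita type is determined by its projective-free bimodule, this yields $S(X\otimes_A Y) = S(X)\,S(Y)$ in $\StPic(A)$ as defined in Definition~\ref{thm:stable}.

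Finally, injectivity on $\Pic_0(A)$ is the soft end of the argument. An element of $\Pic_0(A)$ is represented by $M = {}_\varphi A$ with $\varphi$ fixing all primitive idempotents, already concentrated in degree $0$ and one-sided projective, so here $n=0$ and $S(M)$ is simply the projective-free part of $M$. Since $A$ is connected and not separable, both $M$ and $A$ are projective-free as $A^e$-modules; thus if $S(M)$ is trivial then $M \cong A$ in $\underline{\mathrm{mod}}\,A^e$, and Krull--Schmidt forces $M \cong A$ as bimodules, i.e.\ $\varphi$ is inner and $M$ is trivial in $\TrPic(A)$. The \textbf{main obstacle} is the well-definedness package of the second paragraph: producing $C$ honestly from the perfection hypotheses (in particular guaranteeing that the truncation terminates with a one-sided projective top term) and verifying that $\Omega^n_{A^e}C^n$ is one-sided projective and independent of all choices; the homomorphism and injectivity statements are formal once this is in place.
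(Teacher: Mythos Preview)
The paper does not prove this theorem; it is quoted verbatim as Corollary~2.15 of \cite{RZ} and used as a black box. So there is no ``paper's own proof'' to compare against.

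That said, your sketch is a reasonable reconstruction of the Rouquier--Zimmermann argument, recast in the language of singularity categories. A few remarks. First, you correctly flag that you are only treating the self-injective (indeed symmetric) case, which is all that is needed here; the general statement in \cite{RZ} does not assume this, and your reliance on $\underline{\mathrm{mod}}\,A^e \simeq \D_{\mathrm{sg}}(A^e)$ would fail otherwise. Second, the step you identify as the main obstacle --- producing $C$ with the required properties and checking that $\Omega^n_{A^e}C^n$ is one-sided projective --- is exactly where \cite{RZ} does the work (their Lemmas~2.12--2.14); your inductive sketch is on the right track but would need to be made precise. Third, your injectivity argument is essentially the one in \cite{RZ}: for $M = {}_\varphi A$ with $\varphi$ fixing idempotents, $M$ is indecomposable non-projective as a bimodule (this uses that $A$ is basic and not semisimple), so a stable isomorphism $M \cong A$ forces a genuine bimodule isomorphism by Krull--Schmidt. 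Overall your outline is sound for the case at hand, but since the paper simply cites the result, no proof is expected here.
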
 
 
 Now let $\tau^{\mathcal{R}} \in \Pic(\mathcal{R})$ be the automorphism interchanging $e_1$ and $e_2$, $\gamma_1$ and $\gamma_2$, $\gamma_1'$ and $\gamma_2'$, and fixing all other idempotents and arrows. For $a \in k^*$ let $g_a \in \Pic_0(\mathcal{R})$ be the automorphism multiplying $\gamma_1'$, $\gamma_2'$ and $\gamma_3'$ by $a$ and fixing all idempotents and remaining arrows. It is easy to see that $F \circ \tau^{\Lambda} =  \tau^{\mathcal{R}} \circ F$ and $F \circ f_a =  g_a \circ F$, where $F \colon \K^b(\proj-\Lambda) \to \K^b(\proj-\mathcal{R})$ is the equivalence constructed in the proof of Lemma \ref{thm:equiv}.

\begin{lemma}\label{thm:zentrum} \begin{enumerate}
    \item   Let $C = T_m \circ T_{m-1} \circ \dots \circ T_1  \in \TrPic(\mathcal{R}_m)$. $$ C^{m-1} = g_{-1} [2m-3], \text{ if } m  \text{ is even}$$ 
$$ C^{m-1} = g_{-1} \circ \tau^{\mathcal{R}} [2m-3], \text{ if } m  \text{ is odd}$$ 
\item $$\tau^{\mathcal{R}} \circ T_i = T_i \circ \tau^{\mathcal{R}} \text{ for } i \neq 1,2$$
$$T_1 \circ \tau^{\mathcal{R}} =  \tau^{\mathcal{R}} \circ T_2$$
\end{enumerate}
\end{lemma}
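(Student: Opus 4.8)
My plan is to treat the two parts separately, beginning with the more formal statement (2). The identities $\tau^{\mathcal{R}}\circ T_i = T_i \circ \tau^{\mathcal{R}}$ for $i\neq 1,2$ and $T_1\circ \tau^{\mathcal{R}} = \tau^{\mathcal{R}}\circ T_2$ are instances of the general functoriality of spherical twists under equivalences: for any standard autoequivalence $\Phi$ one has $\Phi\circ T_X\circ \Phi^{-1}\cong T_{\Phi(X)}$. Here $\Phi = \tau^{\mathcal{R}}$ is induced by the algebra automorphism interchanging $e_1,e_2$ together with the corresponding arrows, so $\tau^{\mathcal{R}}(R_i)\cong R_i$ for $i\neq 1,2$ while $\tau^{\mathcal{R}}(R_1)\cong R_2$. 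Since $(\tau^{\mathcal{R}})^2 = \mathrm{id}$, conjugation gives $\tau^{\mathcal{R}}\circ T_i\circ \tau^{\mathcal{R}} = T_i$ for $i\neq 1,2$ and $\tau^{\mathcal{R}}\circ T_1\circ\tau^{\mathcal{R}} = T_2$, which rearrange into the claimed relations. I would either invoke this conjugation principle directly or, to stay self-contained, unwind it from the bimodule description $T_i = X_i\otimes_{\mathcal{R}}-$ with $X_i = \mathrm{cone}(R_i\otimes \Hom_\kk(R_i,\kk)\to \mathcal{R})$, noting that twisting $X_i$ by $\tau^{\mathcal{R}}$ on both sides produces $X_{\tau^{\mathcal{R}}(i)}$.

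For part (1) I would carry out the whole computation on the derived equivalent algebra $\Lambda$, where the action of each twist on the indecomposable projectives is given explicitly by Lemma~\ref{thm:twonLambda}. Since $F$ intertwines $t_i$ with $T_i$, $f_a$ with $g_a$, $\tau^{\Lambda}$ with $\tau^{\mathcal{R}}$ and commutes with the shift, the isomorphism $F^*$ transports the statement $C^{m-1}=g_{-1}[2m-3]$ (resp. $g_{-1}\circ\tau^{\mathcal{R}}[2m-3]$) into the equivalent claim $c^{m-1} = f_{-1}[2m-3]$ (resp. $f_{-1}\circ\tau^{\Lambda}[2m-3]$), where $c := t_m\circ\dots\circ t_1$. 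First I would use Lemma~\ref{thm:twonLambda} to compute $c(P_j)$ for every $j$; one finds a clean ``rotation with shifts'' pattern in which $c$ permutes the summands of the star-shaped tilting complex and introduces controlled shifts, in the same spirit as the bookkeeping behind $\mathcal{F}_k$ in Lemma~\ref{twistseq1}. Iterating $m-1$ times then shows $c^{m-1}(P_j)\cong P_{\sigma(j)}[2m-3]$, with $\sigma=\mathrm{id}$ for even $m$ and $\sigma=(1\,2)$ for odd $m$ — the parity entering through the behaviour of the fork vertices $1,2$, exactly mirroring the action of the longest element $w_0$ of $W(D_m)$ on the diagram.

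This already pins down $c^{m-1}$ up to $\Pic_0(\Lambda)\cong\kk^*$: in the even case $c^{m-1}\circ[-(2m-3)]$ fixes every $P_j$ up to isomorphism, hence lies in $\Pic_0$ and equals $f_a$ for some $a\in\kk^*$; in the odd case the same holds after composing with $\tau^{\Lambda}$. It remains to identify $a=-1$. The element $f_a$ is detected by the scalar by which the induced map acts on the arrows $\alpha_1,\alpha_2$ (which $f_a$ rescales) relative to the remaining arrows (which it fixes), so I would track the induced automorphism of the relevant $\Hom$-spaces between the projective summands through the $m-1$ iterations, keeping careful account of the signs produced by the connecting maps of the cones defining the $t_i$; the sign in $g_{-1}$ is exactly the accumulated effect of these signs. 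As a cross-check, and a way to roughly halve the work, one may first establish the full-twist relation $C^{2(m-1)}=[4m-6]$, which forces $a^2=1$ and reduces the identification to a single sign computation; alternatively one could compute the $\Pic_0$-component through the injective homomorphism $S\colon\TrPic\to\StPic$ recorded above.

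The main obstacle, as is typical for such ``central element'' computations, is not the permutation/shift part — a direct if lengthy induction from Lemma~\ref{thm:twonLambda} — but the precise determination of the $\Pic_0$-scalar as exactly $-1$. This is where one must resist working only up to isomorphism of objects, which cannot detect $\Pic_0$, and instead bookkeep the signs of the structure maps of the iterated cones, or argue via $\StPic$; getting these signs right uniformly across both parities is the delicate heart of the proof.
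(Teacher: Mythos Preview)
Your treatment of part (2) is correct and in fact cleaner than the paper's. The paper does not invoke the conjugation principle $\Phi\circ T_X\circ\Phi^{-1}\cong T_{\Phi(X)}$; instead it verifies the relations on each $P_j$ to conclude they hold modulo $\Pic_0(\mathcal{R})$, and then pushes into $\StPic(\mathcal{R})$ (where every $T_i$ becomes trivial) to kill the remaining $\Pic_0$-ambiguity. Your one-line functoriality argument bypasses both steps.

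For part (1), your overall architecture matches the paper's --- first determine $C^{m-1}$ up to $\Pic_0$, then fix the scalar --- and whether one computes on $\Lambda$ via Lemma~\ref{thm:twonLambda} or on $\mathcal{R}$ directly (as the paper does) is a matter of taste. The gap is in the second stage: you correctly flag the $\Pic_0$-identification as the delicate point but do not actually close it. ``Tracking the signs through $m-1$ iterated cones'' is a plan, not a proof, and the half-twist trick $C^{2(m-1)}=[4m-6]$ only cuts the ambiguity to $a\in\{\pm 1\}$. Your alternative ``argue via $\StPic$'' is exactly what the paper does, but you are missing the two observations that make it work. First, the two-sided tilting complex defining each $T_i$ is a cone of a map from a projective bimodule into $\mathcal{R}$, so $S(T_i)=\mathcal{R}$ and hence $S(C^{m-1})=\mathcal{R}$; the entire $\Pic_0$-element is therefore carried by the shift, i.e.\ by $S([-(2m{-}3)])=\Omega^{2m-3}_{\mathcal{R}\otimes\mathcal{R}^{op}}(\mathcal{R})$. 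Second, identifying this bimodule syzygy as ${}_{g_{-1}}\mathcal{R}$ (resp.\ ${}_{g_{-1}\tau^{\mathcal{R}}}\mathcal{R}$) is not a formality: the paper imports this from an explicit computation of $\Omega^{2m-3}_{\Lambda\otimes\Lambda^{op}}(\Lambda)$ in \cite{GV} and then transports it along the stable equivalence induced by $F$. Without this input (or an equivalent direct calculation of the syzygy), the sign $a=-1$ is not established.
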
 

\begin{proof} \begin{enumerate}
    \item  First we establish that the claim holds modulo $\Pic_0(\mathcal{R_m})$. In other words, we will show that for any $m \geq 4$
$$C^{m-1}(P_i) = P_i[2m-3], \text{ for } i=3,\dots,m. $$
In addition, if $m$ is even, then
$$C^{m-1}(P_i) =  P_i[2m-3], \text{ for } i=1,2.$$
And if $m$ is odd, then 
$$C^{m-1}(P_i) =  P_{3-i}[2m-3], \text{ for } i=1,2.$$
Indeed, one can easily see that 
$$C(P_1) =  (P_2 \xrightarrow{\gamma_2'} P_3)[2]$$
$$C(P_2) = (P_1 \xrightarrow{\gamma_1'} P_3)[2]$$
$$C(P_i) = P_{i+1}, \text{ if } 3 \leq i \leq m-1$$ 
$$C(P_m) = (P_1 \oplus P_2 \xrightarrow{\begin{pmatrix} \gamma_1' & \gamma_2' \end{pmatrix}} P_3 \xrightarrow{\gamma_3'} P_4 \xrightarrow{\gamma_4'} \dots \xrightarrow{\gamma_{m-1}'} P_m)[m-1]$$
It is also clear that $$C^2(P_m) = T_1T_2T_3(P_1 \oplus P_2 \xrightarrow{\begin{pmatrix} \gamma_1' & \gamma_2' \end{pmatrix}} P_3)[m-1] = T_1T_2(P_3 \xrightarrow{\begin{pmatrix} -\gamma_1 \\ \gamma_2 \end{pmatrix}} P_1 \oplus P_2)[m] = P_3[m]$$
Thus, ${C^{n-1}(P_m) = C^{n-3}(P_3[m]) = C^{m-4}(P_4[m+1]) = \dots = C(P_{m-1}[2m-4]) = P_m[2m-3]}$ and for every $i = 3, \dots, m-1$ we have 
$$C^{m-1}(P_i) = C^{i-1} C^{m-i}(P_i) = C^{i-1}P_m[m-i] = C^{i-3}P_3[3m-i] = P_i[2m-3]. $$
Now let $i = 1$ or $2$. We have 
$$\hspace{-2em} \small{C^{n-1}(P_i) = C^{n-2}(P_{3-i} \xrightarrow{\gamma_{3-i}'} P_3)[2] =  C^{n-3}(P_i \xrightarrow{\gamma_i'} P_3 \xrightarrow{-\gamma_3'} P_4)[4] = \dots = C(P_{j} \xrightarrow{\gamma_j'} P_3 \xrightarrow{} \dots \xrightarrow{\pm \gamma_{m-1}'} P_m)[2m-4],}$$ 
where $j = i$ if $m$ is even and $j = 3-i$ if $m$ is odd. Finally, 
$$C(P_{j} \xrightarrow{\gamma_j'} P_3 \xrightarrow{} \dots \xrightarrow{\pm \gamma_{m-1}'} P_m)[2m-4] = T_1T_2(P_j[2m-4]) = T_{3-j}T_jP_j[2m-4] = P_j[2m-3].$$ 
Hence, indeed, $C^{m-1}(P_i) = P_i[2m-3]$ if $m$ is even and $C^{m-1}(P_i) = P_{3-i}[2m-3]$ if $m$ is odd. 
Now since the canonical homomorphism $S \colon \TrPic(\mathcal{R}_m) \to \StPic(\mathcal{R}_m)$ is injective on $\Pic_0(\mathcal{R}_m)$, it remains to show that it sends $C^{m-1} \circ g_{-1} \circ \tau^{\mathcal{R}} [-2m+3] \in \Pic_0(\mathcal{R}_m)$ (if $m$ is odd) or $C^{m-1} \circ g_{-1} [-2m+3] \in \Pic_0(\mathcal{R}_m)$ (if $m$ is even) to $\mathcal{R}_m$ in $\StPic(\mathcal{R}_m)$. Note that for every $i$ the twist $T_i$ is defined by the following two-sided tilting complex, where $\mathcal{R}_m$ is in degree $0$ and $P_i^*$ denotes the $k-$dual of $P_i$: 

$$ \dots \to 0 \to P_i \otimes_{\mathcal{R}_m} P_i^* \xrightarrow{ev} \mathcal{R} \to 0 \to \dots $$ 

Hence $S$ sends each $T_i$ to $\mathcal{R}_m$. Recall that by ${}_\varphi \mathcal{R}_m$ for $\varphi \in \Aut(\mathcal{R}_m)$ we denote the $\mathcal{R}_m$-bimodule which is regular regarded as a right module and has its left multiplication twisted by $\varphi$, i.e. $r \cdot t = \varphi(r)t$ for any $t \in {}_\varphi \mathcal{R}_m$, $r \in \mathcal{R}$. Then we have $S(g_{-1}) = {}_{g_{-1}}\mathcal{R}_m$ and ${S(g_{-1} \circ \tau^{\mathcal{R}}) = {}_{g_{-1}\circ \tau^{\mathcal{R}}}\mathcal{R}_m }$. Since $S([-2m+3]) = \Omega^{2m-3}_{\mathcal{R}_m \otimes \mathcal{R}^{op}_m} (\mathcal{R}_m)$, it is sufficient to show that $\Omega^{2m-3}_{\mathcal{R}_m \otimes \mathcal{R}^{op}_m} (\mathcal{R}_m) \cong {}_{g_{-1}}\mathcal{R}_m$ or ${\Omega^{2m-3}_{\mathcal{R}_m \otimes \mathcal{R}^{op}_m} (\mathcal{R}_m) \cong {}_{g_{-1}\circ \tau^{\mathcal{R}}}\mathcal{R}_m}$ in the stable category respectively if $m$ is even or odd.

By Remark 1 in \cite{GV} we have $\Omega^{2m-3}_{\Lambda_m \otimes \Lambda^{op}_m} (\Lambda_m) \cong {}_{f_{-1}} \Lambda_m$ if $m$ is even and $\Omega^{2m-3}_{\Lambda_m \otimes \Lambda^{op}_m} (\Lambda_m) \cong {}_{f_{-1}\circ \tau^{\Lambda}} \Lambda_m$ if $m$ is odd. By Corollary 2.15 in \cite{RZ}, the derived equivalence $F$ between $\Lambda_m$ and $\mathcal{R}_m$ induces a stable equivalence, i.e. there is a $\Lambda_m-\mathcal{R}_m$-bimodule $M$ and an $\mathcal{R}_m-\Lambda_m$-bimodule $N$ such that in the stable category we have $\Omega^{2m-3}_{\mathcal{R}_m \otimes \mathcal{R}^{op}_m} (\mathcal{R}_m) \cong M \otimes_{\Lambda_m} \Omega^{2m-3}_{\Lambda_m \otimes \Lambda^{op}_m} (\Lambda_m) \otimes_{\Lambda_m} N$. But since $F \circ \tau^{\Lambda} =  \tau^{\mathcal{R}} \circ F$ and $F \circ f_{-1} =  g_{-1} \circ F$, we have in the stable category

$$\Omega^{2m-3}_{\mathcal{R}_m \otimes \mathcal{R}^{op}_m} (\mathcal{R}_m) \cong M \otimes_{\Lambda_m} \Omega^{2m-3}_{\Lambda_m \otimes \Lambda^{op}_m} (\Lambda_m) \otimes_{\Lambda_m} N \cong M \otimes_{\Lambda_m} {}_{f_{-1}} \Lambda_m \otimes_{\Lambda_m} N \cong {}_{g_{-1}}\mathcal{R}_m $$
if $m$ is even, and 
$$\Omega^{2m-3}_{\mathcal{R}_m \otimes \mathcal{R}^{op}_m} (\mathcal{R}_m) \cong M \otimes_{\Lambda_m} \Omega^{2m-3}_{{\Lambda_m} \otimes {\Lambda^{op}_m}} (\Lambda) \otimes_{\Lambda_m} N \cong M \otimes_{\Lambda_m} {}_{f_{-1}\circ \tau^{\Lambda}} \Lambda_m \otimes_{\Lambda_m} N \cong  {}_{g_{-1}\circ \tau^{\mathcal{R}}}\mathcal{R}_m$$ 
if $m$ is odd. This finishes the proof. 
\item This is immediate modulo $\Pic_0$. We have
$$ \tau^\mathcal{R}(T_i(P_j)) = T_i(P_j) = T_i((\tau^\mathcal{R}(P_j)), \ j \geq 3, \ i \geq 3 $$
$$ \tau^\mathcal{R}(T_i(P_j)) = P_{3-j} =T_i(P_{3-j}) = T_i((\tau^\mathcal{R}(P_j)), \ j = 1,2, \ i \neq 3$$
$$ \tau^\mathcal{R}(T_3(P_j)) = (P_3 \xrightarrow{} P_{3-j})[1] = T_3((\tau^\mathcal{R}(P_j)), \ j = 1,2 $$ 
$$T_1(\tau^{\mathcal{R}}(P_1)) = P_2 = \tau^{\mathcal{R}}(T_2(P_1)),\  T_1(\tau^{\mathcal{R}}(P_2)) = P_1[1] = \tau^{\mathcal{R}}(T_2(P_2)) $$ $$ 
T_1(\tau(P_3)) = (P_1 \xrightarrow{} P_3)[1] = \tau^{\mathcal{R}}(T_2(P_3)), \ T_1(\tau^{\mathcal{R}}(P_j)) = P_j = \tau^{\mathcal{R}}(T_2(P_j)), \ j \geq 4 $$
Now we can again conclude that the required relations hold by using the same stable Picard group argument as above. Indeed, since the canonical homomorphism ${S \colon \TrPic(\mathcal{R}) \to \StPic(\mathcal{R})}$ is injective on $\Pic_0$, it suffices to show that $S(\tau^{\mathcal{R}} \circ T_i) = S( T_i \circ \tau^{\mathcal{R}})$ for $i \geq 3$ and $S(\tau^\mathcal{R} \circ T_1) = S(T_2 \circ \tau^\mathcal{R})$. But this is trivial, because $S$ sends every $T_i$ to $\mathcal{R}$ and $\tau^\mathcal{R}$ to ${}_{\tau^{\mathcal{R}}} \mathcal{R}$. 

\end{enumerate}
\end{proof} 

\section{Main result}\label{sec:mainres} 
 Recall that by $B_{D_m}$ we denote the Artin group of type $D_m$ (see Definition \ref{def:braid}). By $\{\sigma_i\}_{i=1}^m$ we will denote the set of standard generators of $B_{D_m}$, i.e. we will always assume that $\sigma_i$ satisfy braid relations of type $D_m$. Let $c =\sigma_1\dots\sigma_m \in B_{D_m}$. We define the following group by generators and relations 

$$ \widetilde{G}_{m} = \bigg\langle \sigma_i \in B_{D_{m}},\{\kappa_a\}_{a \in k^*}, \tau, s \bigg| \begin{array}{c}
\tau \sigma_i = \sigma_i \tau \ (i\neq 1,2); \ \tau \sigma_1 = \sigma_2 \tau; \  \tau^2 = e; \  \\  s \text{ and }  \kappa_a \ \forall a \in k^* \text{ commute with every generator}
\end{array} \bigg\rangle $$

\noindent Here we mean that $\kappa_a$ for $a \in k^*$ satisfy the same relations which hold for the corresponding elements of the multiplicative group of the field, i.e. $\kappa_a\kappa_b = \kappa_{ab}$ and $\kappa_1 = e$. Now depending on the parity of $m$ we add one more relation, derived in Lemma \ref{thm:zentrum}. Namely, let $G_m = \widetilde{G}_m/ \langle c^{m-1}s^{-2m+3} \kappa_{-1} \rangle$ if $m$ is even and $G_m = \widetilde{G}_m/ \langle c^{m-1}s^{-2m+3} \kappa_{-1}\tau \rangle$ if $m$ is odd.

\begin{theorem}\label{thm:main} The derived Picard group $\TrPic(\mathcal{R}_m)$ of $\mathcal{R}_m$ is isomorphic $G_m$.  \end{theorem}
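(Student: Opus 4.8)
The plan is to exhibit a group isomorphism $\Phi\colon G_m \to \TrPic(\mathcal{R}_m)$ defined on generators by $\Phi(\sigma_i)=T_i$, $\Phi(\kappa_a)=g_a$, $\Phi(\tau)=\tau^{\mathcal{R}}$ and $\Phi(s)=[1]$. First I would check that $\Phi$ is well defined, i.e. that every defining relation of $G_m$ holds among these images. The braid relations of type $D_m$ hold by Theorem~\ref{thm:faith}; the relations involving $\tau^{\mathcal{R}}$ (commuting with $T_i$ for $i\neq 1,2$, intertwining $T_1$ and $T_2$, and $(\tau^{\mathcal{R}})^2=e$) are Lemma~\ref{thm:zentrum}(2); the shift is central in any derived Picard group; and the centrality of each $g_a$ is a short separate check (comparing $g_a T_i$ with $T_i g_a$ on the $R_j$ and then applying injectivity of the map to $\StPic$ on $\Pic_0$, exactly as in the proof of Lemma~\ref{thm:zentrum}). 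Finally the parity-dependent relation $c^{m-1}s^{-2m+3}\kappa_{-1}$, respectively $c^{m-1}s^{-2m+3}\kappa_{-1}\tau$, maps to the identity by Lemma~\ref{thm:zentrum}(1), so $\Phi$ factors through the quotient $G_m$.

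The main obstacle is surjectivity, for which I would follow the strategy of Zvonareva~\cite{Alexandra}. It is convenient to transport everything to $\Lambda_m$: the equivalence $F$ of Lemma~\ref{thm:equiv} induces an isomorphism $\TrPic(\mathcal{R}_m)\cong\TrPic(\Lambda_m)$ carrying $T_i,g_a,\tau^{\mathcal{R}}$ to $t_i,f_a,\tau^{\Lambda}$, so it suffices to show that $H=\langle t_i,f_a,\tau^{\Lambda},[1]\rangle$ equals $\TrPic(\Lambda_m)$. Given $F\in\TrPic(\Lambda_m)$, the object $T=F(\Lambda_m)$ is a tilting complex with $\End_{\D^b(\Lambda_m)}(T)\cong\Lambda_m$, and after composing with a power of the shift $[1]\in H$ I may assume it is concentrated in non-positive degrees. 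By Aihara's tilting-connectedness (Theorem~\ref{thm:tiltconn}), $T$ is then obtained from $\Lambda_m$ by a sequence of irreducible left mutations. The goal is to prove that $T$ lies in the $H$-orbit of $\Lambda_m$, and the tool is the combinatorial model of Section~\ref{sec:mutmodel}: each mutation transforms the modified Brauer tree by one of the explicit rules established there, so a mutation sequence producing a tilting complex whose endomorphism algebra is isomorphic to $\Lambda_m$ corresponds to a path in the mutation graph returning to the star-with-double-edge shape $\Gamma_m$. Reading Lemma~\ref{thm:twviamut} backwards identifies the twists $t_i$ with explicit such mutation sequences, which I would use to strip off factors in $H$ by induction on the length of the sequence, so that the shape of the tree returns step by step to $\Gamma_m$. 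The delicate bookkeeping — tracking the labelling of the indecomposable summands under these identifications, as flagged in the Remark after Lemma~\ref{thm:twviamut} — is where I expect most of the technical work of Sections~\ref{sec:mainres}--\ref{sec:app} to concentrate.

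Once the $H$-orbit of $\Lambda_m$ is shown to contain every tilting complex with endomorphism algebra isomorphic to $\Lambda_m$, surjectivity follows by an orbit--stabilizer argument: for $F\in\TrPic(\Lambda_m)$, pick $h\in H$ with $h(\Lambda_m)\cong F(\Lambda_m)$; then $h^{-1}F$ fixes the trivial tilting complex and is therefore induced by a Morita autoequivalence, i.e. lies in $\Pic(\Lambda_m)\cong\Out(\Lambda_m)$. By Theorem~\ref{thm:pic}, $\Pic(\Lambda_m)\cong\kk^*\times\Z/2\Z=\langle f_a,\tau^{\Lambda}\rangle\subset H$ for every $m\geq 4$; this is precisely why the argument is run on $\Lambda_m$ rather than on $\mathcal{R}_m$, whose Picard group is strictly larger when $m=4$, and hence why both algebras are needed. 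Thus $h^{-1}F\in H$, so $F\in H$ and $\Phi$ is onto.

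Finally I would prove injectivity, i.e. that the relations listed in $G_m$ are complete. The restriction of $\Phi$ to the braid subgroup $\langle\sigma_i\rangle$ is injective by Theorem~\ref{thm:faith}, and $\langle s\rangle\cong\Z$, $\langle\kappa_a\rangle\cong\kk^*$ map injectively onto central elements, the latter using injectivity of $\TrPic\to\StPic$ on $\Pic_0$. Since $\widetilde{G}_m\cong(B_{D_m}\rtimes\langle\tau\rangle)\times\kk^*\times\Z$ and the unique extra relation identifies the central element $c^{m-1}$, respectively $c^{m-1}\tau$ when $m$ is odd, with the central combination $s^{2m-3}\kappa_{-1}$, it remains to check that no further collapse occurs. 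For this I would use two invariants to separate the factors: the induced action on the Grothendieck group, under which each $t_i$ becomes a reflection in the $D_m$ root lattice and $[1]$ acts as $-\mathrm{id}$, and the homomorphism to $\StPic(\mathcal{R}_m)$, which detects $\Pic_0$ and the class of $\tau^{\mathcal{R}}$. Combining these with the faithfulness of the braid action (Theorem~\ref{thm:faith}) shows that any element of $\ker\Phi$ already lies in the central subgroup we quotiented by, so $\ker\Phi$ is trivial and $\Phi\colon G_m\to\TrPic(\mathcal{R}_m)$ is the desired isomorphism.
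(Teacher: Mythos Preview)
Your overall architecture---define $\Phi$ on generators, verify the relations, prove surjectivity via Aihara's tilting-connectedness transported to $\Lambda_m$, and prove injectivity separately---matches the paper. Two points deserve sharpening.

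\textbf{Injectivity.} Your proposed invariants do not combine as easily as you suggest. The action on the Grothendieck group only sees the image of the braid part in the Weyl group $W(D_m)$, so it cannot distinguish braid elements modulo the pure braid group; and the map to $\StPic$ sends every $T_i$ to the trivial class, so it sees nothing of the braid component at all. The paper's argument is more direct and you should adopt it: write an element of $\ker\Phi$ in the normal form $\tau^b t \kappa_a s^d$, and observe that since $g_a$ and $[d]$ are central, $(\tau^{\mathcal R})^b T$ must commute with every $T_i$. For $b=0$ this forces $T$ to lie in the centre of $B_{D_m}$, which by Brieskorn--Saito is cyclic on $c^{m-1}$ (resp.\ $c^{2m-2}$) for $m$ odd (resp.\ even); Lemma~\ref{thm:zentrum}(1) then identifies the corresponding shift and $g_{-1}$ contribution, and one checks the element is trivial in $G_m$. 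For $b=1$ one finds that conjugation by $T$ swaps $T_1$ and $T_2$ while fixing the other $T_i$; for even $m$ this automorphism of $B_{D_m}$ is not inner (it is not inner even in the Coxeter group), a contradiction, while for odd $m$ one reduces to the $b=0$ case via $\tau^{\mathcal R}=g_{-1}C^{m-1}[-2m+3]$.

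\textbf{Surjectivity.} Your phrase ``strip off factors in $H$ by induction on the length of the mutation sequence'' hides the real difficulty. Lemma~\ref{thm:twviamut} expresses each $t_i$ as a mutation word \emph{starting from the star} $\Gamma_m$; once a single mutation has moved you to a different modified Brauer tree, the next mutation no longer corresponds to any twist, and there is no evident way to peel it off. The paper's device is to insert, at every step, the standard sequence $\mu^{st}_{G^s}$ and its inverse, so that each elementary piece has the shape $(\widetilde{\mu}^{st}_{G^{s+1}})^{-1}\circ \mu_{j_{s+1}}^+\circ \widetilde{\mu}^{st}_{G^s}$, an autoequivalence from (a relabelled) $\Lambda$ to itself. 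These pieces are then computed case by case (Sections~\ref{sec:stmut}--\ref{sec:app}) and expressed as words in the $t_i$. Your orbit--stabiliser conclusion via $\Pic(\Lambda_m)$ is correct and is exactly why the argument is carried out over $\Lambda_m$.
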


\begin{proof} In order to simplify the notations, in the proof we denote the category $\K^b(\proj-\mathcal{R_m})$ by just $\K$. We begin by constructing a homomorphism $\Phi \colon G_m \to \TrPic(\mathcal{R})$. Let $\Phi(\sigma_i) = T_i$, $\Phi(s) = [1]$, $\Phi(\tau) = \tau^{\mathcal{R}}$ and $\Phi(\kappa_a) = g_a$. One has to check that $\Phi$ is a homomorphism first. Indeed, the spherical twists $\{T_i\}_{i=1}^m$ are known to satisfy braid relations of type $D$ (see \cite{ST} for details). Applying the stable Picard group argument we used in the proof of Theorem \ref{thm:zentrum}, it is easy to see $\{g_a\}_{a\in k^*}$ and the shift commute with each other, $\tau^\Lambda$ and the twists. The relation $(\tau^{\mathcal{R}})^2 = Id_{\K}$ is clear. Finally, relations added when passing from $\widetilde{G}_m$ to $G_m$ as well as $\tau^{\mathcal{R}} \circ T_i = T_i \circ \tau^{\mathcal{R}} $ for $i \neq 1,2$ and $T_1 \circ \tau^{\mathcal{R}} =  \tau^{\mathcal{R}} \circ T_2$ are satisfied in $\TrPic(\mathcal{R}_m)$ by Lemma \ref{thm:zentrum}.

Next we will show that $\Phi$ is injective. Note that every element in $G_m$ can be written in the form $\tau^b t \kappa_a s^d $, where $b = 1$ or $0$, $a \in k^*$, $d \in \Z$, $t \in B_{D_m}\leqslant G_m$. Suppose ${\Phi(\tau^b t \kappa_a s^d) = (\tau^{\mathcal{R}})^bT g_a [d] = Id_{K}}$, where $T = \Phi(t) \in B_{D_m} \leqslant \TrPic(\mathcal{R}_m)$ (by Theorem \ref{thm:faith}). Then  $(\tau^{\mathcal{R}})^bT$ commutes with every $T_i$. First let $b=0$. Then $T$ is in the center of the subgroup $B_{D_m}$ generated by the twists in $\TrPic(\mathcal{R}_m)$. The center $Z(B_{D_m})$ of $B_{D_m}$ is the infinite cyclic subgroup generated by $c^{m-1} = (\sigma_1 \dots \sigma_m)^{m-1}$ if $m$ is odd and by $c^{2m-2}$ if $m$ is even (see \cite{BS}). Recall that by $C$ we denote $T_m \circ \dots\ \circ T_1 \in \TrPic(\mathcal{R}_m)$. Let $m$ be even. Then 
$$ T = C^{(m-1)k} = (g_{-1})^k[k(2m-3)], \  (g_{-1})^k[k(2m-3)]g_a [d] = g_{(-1)^ka}[k(2m-3)+d] = Id_{\K}  $$
Thus, by  Theorem \ref{thm:faith} 
$$d=-k(2m-3), a = (-1)^k, t = c^{(m-1)k}.$$
Then $t \kappa_a s^c = c^{(m-1)k} \kappa_{(-1)^k}s^{-k(2m-3)} = e \in G_m$. Now let $m$ be odd. We have 
$$ T = C^{(m-1)2k} = (\tau^{\mathcal{R}})^{2k}(g_{-1})^{2k}[2k(2m-3)] = [2k(2m-3)], \ g_a [d+2k(2m-3)] = Id_{\K} $$
Hence, $a=1, d= -2k(2m-3), t = c^{2k(m-1)}$ and again $${t \kappa_a s^d = c^{2k(m-1)}[-2k(2m-3)] = e \in G_m}.$$ Now we consider the case $b=1$. If $\tau^{\mathcal{R}}T$ commutes with every $T_i$, we have 
$$ \tau^{\mathcal{R}} T T_i =  T_i\tau^{\mathcal{R}} T = \tau^{\mathcal{R}} T_i T, \ i \geq 3  $$
$$ \tau^{\mathcal{R}} T T_1 =  T_1\tau^{\mathcal{R}} T = \tau^{\mathcal{R}} T_2 T. $$
Thus, $T$ is an element of the Artin group $B_{D_m}$ such that it commutes with the generators $T_i$ for $i \geq 3$. Moreover, conjugation by $T$ interchanges $T_1$ and $T_2$. If $m$ is even, we get a contradiction, because the automorphism of $B_{D_{2k}}$ interchanging the two generators $T_1$ and $T_2$ is not inner even in the corresponding Coxeter group (see \cite{F}). Now suppose $m$ is odd. In this case $\tau^{\mathcal{R}}T = g_{-1}C^{m-1}[-2m+3]T$ commutes with every $T_i$, hence so does $C^{m-1}T$. Thus, we have $$C^{m-1}T = C^{(m-1)2k} = (\tau^{\mathcal{R}})^{2k}(g_{-1})^{2k}[2k(2m-3)] = [2k(2m-3)]$$
$$ \tau^{\mathcal{R}} T g_a [d] =  g_{-1}C^{m-1}T g_a [d-2m+3] =  g_{-1}g_{a}[2k(2m-3)+d-2m+3] = Id_{\K}$$
Thus, $$a=-1, d=-(2k-1)(2m-3), t = c^{(2k-1)(m-1)}$$ and we have $${\tau g_{-1} c^{(2k-1)(m-1)} s^{-(2k-1)(m-1)} = e \in G_m.}$$ 
 Finally, we need to show that $\Phi$ is surjective. It is sufficient to show that the derived Picard group $\TrPic(\Lambda)$ of $\Lambda$ is generated by $F^*(T_i)=t_i \ (i=1,\dots,m)$, $\Pic(\Lambda)$ and the shift. For this we adopt the general strategy of Zvonareva used in \cite{Alexandra}. The proof of this part is somewhat technically involved, so below we only describe the plan and then fill in the details in what follows. As we have noted before, this scheme of the proof is essentially lifted from \cite{Alexandra}. 
\begin{enumerate}
    \item[I.] For every modified Brauer tree $G$ we construct a particular {\bf standard sequence} of left mutations ${\mu_{G}^{st} = \mu_{i_d}^+ \circ \dots \circ \mu_{i_1}^+}$ such that $G$ is the modified Brauer tree of $\End_{\D^b(\Lambda)}(\mu_{G}^{st}(\Lambda))$. In this case $\mu_{G}^{st}$ naturally equips the graph $G$ with a labeling of edges, also referred to as standard (coming from the usual labeling of edges of $\Gamma=G_\Lambda$, the modified Brauer tree of $\Lambda$). Section \ref{sec:stmut} is devoted to this part of the proof. 
    \item[II.]  Recall that by $\Gamma$ we denote the modified Brauer tree of $\Lambda$ with the usual (standard in terms of I., for the empty sequence of mutations) labeling of edges, as in Definition \ref{def:Lambda}. Denote the group generated by $\{t_i\}_{i=1}^m$, $\Pic(\Lambda)$ and the shift $[1]$ by $\mathcal{P}$. Recall that the elements of $\TrPic(\Lambda)$ modulo $\Pic(\Lambda)$ are in correspondence with tilting complexes $T$ over $\Lambda$ with $\End_{\D^b(\Lambda)}(T) \cong \Lambda$ (see Section \ref{sec:prelim}). Thus, it is sufficient to show that for every tilting complex $T$ with $\End_{\D^b(\Lambda)}(T) \cong \Lambda$ there is an autoequivalence $F \in \mathcal{P}$ which sends $\Lambda$ to $T$. We can assume without loss of generality that $T$ is concentrated in non-positive degrees. Every representation-finite symmetric algebra is tilting-connected (Theorem \ref{thm:tiltconn}), thus there exists a sequence of left mutations such that ${\mu_{j_k}^+ \circ \dots \circ \mu_{j_1}^+(\Lambda) = T}$. Our final goal is to express $T$ as an image of a composition of twists modulo shifts and $\Pic(\Lambda)$. To do this, we will split this sequence of mutations into small understandable pieces. In turn, this will require us to carefully account for different labelings. More precisely, let $\mu^s = \mu_{j_s}^+ \circ \dots \circ \mu_{j_1}^+$ and $T^s = \mu^s(\Lambda)$ for $1 \leq s \leq k$. Let $G^{s}$ be the modified Brauer tree of $A^s = \End_{\D^b(\Lambda)}(T^s)$. There are two labelings of the edges of $G^s$ naturally obtained from $\Gamma$. Namely, the standard one discussed in I., emerging via $\mu^{st}_{G^s}$, and the one emerging via $\mu^s$. Let $\sigma_s \in S_m$ be the permutation of the labels on the edges of $G^s$ which sends the standard labeling to the one coming via $\mu^s$. Now let $\Gamma^{\sigma_s}$ be the modified Brauer tree obtained from $\Gamma$ by applying the permutation $\sigma_s$ to the labels on its edges (i.e. it is still $\Gamma$ as a graph, but its edges are labeled differently). In addition, by $\Lambda^{\sigma_s} \cong \Lambda$ we denote the corresponding algebra. Let $\mu_{G^s}^{st} = \mu_{i_d}^+ \circ \dots \circ \mu_{i_1}^+$ be the standard sequence of mutations for $G^s$. By $\widetilde{\mu}_{G^s}^{st}$ we denote the series of mutations $\mu_{\sigma_s(i_d)}^+ \circ \dots \circ \mu_{\sigma_s(i_1)}^+$. Observe that the modified Brauer tree of $\End_{\D^b(\Lambda)}(\widetilde{\mu}_{G^s}^{st}(\Lambda^{\sigma_s}))$ is $G^s$ with the labeling of edges induced by $\mu^s$. Denote $(\widetilde{\mu}_{G_s}^{st})^{-1} = \mu_{\sigma_s(i_1)}^- \circ \dots \mu_{\sigma_s(i_d)}^-$. Clearly, $\widetilde{\mu}_{G^s}^{st} \circ (\widetilde{\mu}_{G^s}^{st})^{-1}(A^s) = A^s$, so we can write:  
   $$T = \mu_{j_k}^+ \circ \dots \circ \mu_{j_1}^+(\Lambda) = \mu_{j_k}^+ \circ \widetilde{\mu}_{G^{k-1}}^{st} \circ (\widetilde{\mu}_{G^{k-1}}^{st})^{-1} \circ \mu_{j_{k-1}}^+ \circ \dots \circ \widetilde{\mu}_{G^2}^{st} \circ (\widetilde{\mu}_{G^2}^{st})^{-1} \circ \mu_{j_2}^+ \circ \widetilde{\mu}_{G^1}^{st} \circ (\widetilde{\mu}_{G^1}^{st})^{-1} \circ \mu_{j_1}^+(\Lambda) $$
   
  \item[III.] Now we can split this sequence of mutations into several pieces and prove that each of the them induces an autoequivalence which lies in $\mathcal{P}$. Indeed, note that $\End_{\D^b(\Lambda)}((\widetilde{\mu}_{G^1}^{st})^{-1} \circ \mu_{j_1}^+(\Lambda)) \cong \Lambda^{\sigma_1}$, $\End_{\D^b(\Lambda)}(\mu_{j_k}^+ \circ \widetilde{\mu}_{G^{k-1}}^{st}(\Lambda^{\sigma_{k-1}})) \cong \Lambda^{\sigma_k}$, $\End_{\D^b(\Lambda)} ((\widetilde{\mu}_{G^{s+1}}^{st})^{-1} \circ \mu_{j_{s+1}}^+ \circ \widetilde{\mu}_{G^s}^{st}(\Lambda^{\sigma_s})) \cong \Lambda^{\sigma_{s+1}}$, $s=1, \dots, k-2$. Then by Lemma 3 in \cite{Alexandra} one can see that it is sufficient to express each of the complexes $(\widetilde{\mu}_{G^1}^{st})^{-1} \circ \mu_{j_1}^+(\Lambda)$, $\mu_{j_k}^+ \circ \widetilde{\mu}_{G^{k-1}}^{st}(\Lambda^{\sigma_{k-1}})$ and $(\widetilde{\mu}_{G^{s+1}}^{st})^{-1} \circ \mu_{j_{s+1}}^+ \circ \widetilde{\mu}_{G^s}^{st}(\Lambda^{\sigma_s})$ as images of $\Lambda$ under some series of spherical twists $\{t_i\}_{i=1}^m$ modulo $\Pic(\Lambda)$ and the shift. This final step of the proof is fulfilled in Section   \ref{sec:stanyst}.
  
  %Indeed, the image of $\Lambda$ under an autoequivalence of $D^b(\Lambda)$ determines an element of $\TrPic(\Lambda)$ uniquely modulo $\Pic(\Lambda)$, hence, having expressed the complexes above as images of $\Lambda$ under elements of $\mathcal{P}$, we will have established that $T$ is also an image of $\Lambda$ under an element of $\mathcal{P}$. 
  
  \end{enumerate}
\end{proof}

\section{Standard series of mutations}\label{sec:stmut}
 As before, throughout this section $m$ denotes a fixed integer, $m \geq 4$. Recall that by $\Gamma = \Gamma_m$ we denote the star with $m-2$ regular edges and a double edge. The edges of $\Gamma$ are labeled with integers from $1$ to $m$ as in Definition \ref{def:Lambda}. By $\Lambda = \Lambda_m =  \bigoplus_{i=1}^m P_i$ we denote the corresponding algebra. Let $G$ be any modified Brauer tree with $m$ edges (the double edge is counted as two edges). The goal of this section is to construct a particular series of left mutations $\mu_G^{st} = \mu_{i_d}^+ \circ \dots \circ\mu_{i_1}^+$ for each $G$ in such a way that $G$ would be the modified Brauer tree corresponding to $\End_{\D^b(\Lambda)}(\mu_G^{st}(\Lambda))$. More precisely, we want the modified Brauer tree of $\End_{\D^b(\Lambda)}(\mu_G^{st}(\Lambda))$ to be isomorphic to $G$ as a graph together with a cyclic ordering of half-edges around each vertex (or as a graph together with an embedding into the place). In the case of Double Edge trees, we assume that graph isomorphisms send the double edge to the double edge. From now on this is what we always mean by ``the modified Brauer tree of $\End_{\D^b(\Lambda)}(\mu_G^{st}(\Lambda))$ is $G$''. We will refer to $\mu_G^{st}$ as {\bf the standard series of mutations} for $G$. Note that such a series $\mu_G^{st}$ naturally equips the graph $G$ with a labeling of edges emerging from the standard labeling of edges of $\Gamma$. This labeling on $G$ will also be referred to as {\bf standard}. Constructing the desired series $\mu_G^{st}$, we are also going to compute the tilting complexes $\mu_G^{st}(\Lambda)$ and describe the standard labeling explicitly.  
 
 \subsection{The Double Edge case}
 Suppose that $G$ is of the Double Edge type. We fix the following labeling of edges of $G$, which is going to coincide with the labeling induced by the standard sequence $\mu_G^{st}$ that is to be constructed. Let the double edge be labeled with $1, 2$ and let the end of the double edge that is not a leaf be the root of the tree. Now label the regular edges using the depth-first search {\bf starting from the edge followed by the double edge} in the cyclic ordering of edges around the root. See any of the trees in Figure \ref{fig:stex} for an example. Note that this labeling is consistent with our standard labeling for the Double Edge star $\Gamma$, introduced in Definition \ref{def:Lambda}. When this labeling is fixed, we will write $E(G)$ for the set of labels on the edges, i.e. $i \in E(G)$ is the edge of $G$ to which the number $i \in \{1, \dots, m\}$ is assigned. Now let ${\varphi_G \colon E(G) \to \{0, \dots, m-3\}}$ be the map sending each of the edges of $G$ to the length of the shortest path from this edge to the root. In particular, $\varphi_G(1) = 0$,$\varphi_G(2) = 0$, $\varphi_G(3) = 0$. We refer to $\varphi_G(i)$ as {\bf the level} of the edge $i$ in the tree $G$. In addition, if $i \in E(G)$ is an edge of $G$ with $\varphi_G(i) > 0$, let $\psi_G(i)$ be the unique edge adjacent to $i$ with $\varphi_G(\psi_G(i)) = \varphi_G(i) - 1$. For instance, if there is no edge following $i$ in the cyclic ordering on the same level as $i$, then we have $\psi_G(i) = i-1$.  
 
 \begin{lemma}\label{thm:steared} Let $\mu_G^{st} = (\mu^+_m)^{\varphi_G(m)} \circ \dots \circ (\mu^+_4)^{\varphi_G(4)}$. Then the modified Brauer tree of $\End_{\D^b(\Lambda)}(\mu_G^{st}(\Lambda))$ is $G$. Moreover, the labeling of edges of $G$ induced by $\mu_G^{st}$ coincides with the depth-first labeling described above. The indecomposable summands of the tilting complex $T^{st}_G = \mu_G^{st}(\Lambda)$ are as follows: 
 
 $$ (T_G^{st})_i = P_i \text{ if } \varphi_G(i) = 0, \quad 3 \leq i \leq m $$
$$ (T_G^{st})_i = \big(P_i \to P_{\psi_G(i)}\big)[\varphi_G(i)] \text{ if } \varphi_G(i) > 0, \quad 3 \leq i \leq m $$
$$(T_G^{st})_1 = P_{1}, (T_G^{st})_2 = P_{2}$$

We will say that $\mu_G^{st}$ is the standard sequence of mutations for $G$. 

  \end{lemma}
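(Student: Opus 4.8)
The plan is to build the tree $G$ out of the star $\Gamma$ one edge at a time and track both the combinatorics and the tilting complex by a double induction. Write $\mu^{(k)} = (\mu^+_k)^{\varphi_G(k)}\circ\cdots\circ(\mu^+_4)^{\varphi_G(4)}$ for $3 \le k \le m$, so that $\mu^{(3)} = \mathrm{id}$ and $\mu^{(m)} = \mu_G^{st}$. The outer induction is on $k$, with invariant: the modified Brauer tree of $\End_{\D^b(\Lambda)}(\mu^{(k)}(\Lambda))$ consists of the edges $1,\dots,k$ placed exactly as in $G$ — these form a connected subtree containing the root, since in a depth-first labeling every edge's parent $\psi_G(\cdot)$ gets a strictly smaller label — together with edges $k+1,\dots,m$ still pendant at the root as in $\Gamma$; the induced labeling is the depth-first one; and the summands of $\mu^{(k)}(\Lambda)$ are $P_i$ for $i>k$ and are as claimed for $i\le k$. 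The base case $k=3$ is $\Gamma$ itself, where $\varphi_\Gamma\equiv 0$ on regular edges and the formula reads $(T)_i=P_i$. I would record at the outset that $\mu_G^{st}$ only ever applies $\mu^+_j$ with $j\ge 4$, so the mutations at the double edge (Case 2) and at edge $3$ (Case 3) of Section~\ref{sec:mutmodel} are never triggered: edge $3$ never moves, and because edges slide away from the root while the double edge stays pendant, edge $3$ remains the edge immediately followed by the double edge throughout. Hence every mutation performed is an ordinary Kauer move (Case 1).

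For the outer step I would fix $k$ and run an inner induction on $\ell=0,1,\dots,\varphi_G(k)$ describing the effect of $(\mu^+_k)^\ell$ on the tree produced by $\mu^{(k-1)}$. Let $r_0,r_1,\dots,r_{\varphi_G(k)-1}=\psi_G(k)$ be the consecutive edges on the geodesic in $G$ from the root to the final position of edge $k$, with $r_j$ at level $j$; each $r_j$ has label $<k$ and is already in place with summand $(P_{r_j}\to P_{r_{j-1}})[j]$ (and $P_{r_0}$ for $j=0$) by the outer hypothesis. The inner claim is that after $(\mu^+_k)^\ell$ the edge $k$ is pendant with its inner end at level $\ell$, attached to $r_{\ell-1}$, with summand $P_k$ for $\ell=0$ and $(P_k\to P_{r_{\ell-1}})[\ell]$ for $\ell\ge 1$. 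Since the children of edge $k$ carry labels $>k$ and are still at the root, the outer end of $k$ is a leaf at every stage, so each $\mu^+_k$ is the pendant case of a Kauer move: by Case 1 the inner end of $k$ slides along the edge immediately following $k$ in the cyclic order at its current vertex, and the invariant to maintain is that this edge is exactly $r_\ell$.

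On the algebraic side the inner step is a short explicit computation. Applying $\mu^+_k$ to $(P_k\to P_{r_{\ell-1}})[\ell]$ amounts to taking the cone of its minimal left approximation; the relevant component is the chain map $(P_k\to P_{r_{\ell-1}})[\ell]\to(P_{r_\ell}\to P_{r_{\ell-1}})[\ell]$ into the summand of the edge $r_\ell$ along which $k$ slides, given by the natural map $P_k\to P_{r_\ell}$ in the bottom degree and the identity of $P_{r_{\ell-1}}$ in the next. In the mapping cone the term $P_{r_{\ell-1}}$ then appears with an identity differential, which cancels by Gaussian elimination; the surviving two-term complex is exactly $(P_k\to P_{r_\ell})[\ell+1]$. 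For $\ell=0$ the approximation of $P_k$ is just the map $P_k\to P_{r_0}$ and the cone is $(P_k\to P_{r_0})[1]$, with no cancellation. This upgrades the inner hypothesis from $\ell$ to $\ell+1$, and after $\varphi_G(k)$ steps edge $k$ reaches level $\varphi_G(k)$ with summand $(P_k\to P_{\psi_G(k)})[\varphi_G(k)]$. Because $\mu^+_k$ alters only the $k$-th summand and moves only the edge $k$, the edges $k+1,\dots,m$ and their summands $P_i$ are left untouched, so the outer invariant is preserved and the outer step closes.

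The main obstacle is the purely combinatorial heart of the inner induction: checking that at each stage the edge immediately following the pendant edge $k$ in the cyclic order at its current vertex is precisely the next geodesic edge $r_\ell$, so that $k$ slides deeper toward $\psi_G(k)$ rather than into another branch. This forces one to control how the depth-first labeling governs the cyclic orderings through successive Kauer moves, especially at a branch vertex where $r_{\ell-1}$ has several children: one must verify that the already-placed siblings (labels $<k$) and the not-yet-placed ones (labels $>k$, still at the root) are arranged so that $k$ meets $r_\ell$ first. The Remark following Case 1 of Section~\ref{sec:mutmodel} — that a Kauer move relabels only the moved edge and leaves every other label as in the source tree — is what keeps the depth-first labeling self-consistent under iteration and reduces this to a finite local check at each vertex rather than a global recomputation.
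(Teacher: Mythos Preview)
Your proposal is correct and follows exactly the approach the paper indicates: the paper's own proof is simply ``easy to establish by induction'' together with a reference to Lemma~6 of \cite{Alexandra}, noting that since $\mu_G^{st}$ only uses $\mu_j^+$ with $j\geq 4$ the double edge is never involved and the argument is literally the Brauer-tree one. Your double induction (outer on $k$, inner on the level $\ell$), the Gaussian-elimination computation of the cone, and your identification of the combinatorial check --- that after each slide the edge following $k$ is the next geodesic edge $r_\ell$, which indeed holds because under the depth-first labeling $r_\ell$ is the largest-labeled already-placed child of $r_{\ell-1}$ and the Kauer move inserts $k$ immediately after $r_{\ell-1}$ --- are precisely the content of that referenced argument.
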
 
  
  \begin{figure}[H]
  \centering
  \includegraphics[width=.9\linewidth]{stand_ex.pdf}
  \caption{The standard sequence of mutations for the Double Edge tree $G$ (on the right,. with $\varphi_G(3) = 0, \varphi_G(4) = \varphi_G(6) = 1, \varphi_G(5) = 2$.}
  \label{fig:stex}
\end{figure}

Figure \ref{fig:stex} above shows an example of a standard sequence of mutations for a Double Edge tree. Lemma \ref{thm:steared} is easy to establish by induction. Moreover, since the standard sequence we constructed does not in any way involve the double edge, it is not difficult to see that the proof would be literally the same as the proof of the analogous statement for Brauer tree algebras in \cite{Alexandra} (see Lemma 6). 
 
 \subsection{The Triple Tree case}
 Suppose that $G$ is of the Triple Tree type and let $G_1, G_2, G_3$ be the three trees forming it. In addition, denote by $m_n = |E(G_n)|$ the number of edges in the corresponding tree, $n=1,2,3$. Then one has $m_1 + m_2 + m_2 = m-3$. We shall construct the series $\mu_G^{st}$ in several steps, computing the required tilting complex along the way: 
 
 \begin{enumerate}
     \item First apply $\mu^+_{2}$ to $\Lambda$. We get $\mu^+_2(\Lambda) = \bigoplus\limits_{i\neq 2}P_i \oplus (P_2 \xrightarrow{\delta_2} P_m)[1]$ and the modified Brauer tree corresponding to $\End_{\D^b(\Lambda)}(\mu^+_2(\Lambda))$ is as pictured below in Figure  \ref{fig:stmut1} (in the middle). 
     \item Now let $$\theta := \mu_{m_1+m_2+2}^+ \circ \dots \circ \mu_{m_1+3}^+ \circ  (\mu^+_{m_1+2})^2 \circ \dots \circ (\mu_3^+)^2$$
   $$Q_k := \bigg(P_k \xrightarrow{\begin{pmatrix} -\beta_k \dots \beta_{4} \alpha_1 \\ 
 \beta_k \dots \beta_{4} \alpha_2 \end{pmatrix}} P_1\oplus P_{2} \xrightarrow{\begin{pmatrix} \delta_1 & \delta_2 \end{pmatrix}} P_{m}\bigg) [2] \text{ for } k = 3, \dots, m_1+2$$ 
 $$L_t := (P_t \xrightarrow{\beta_t \dots \beta_4 \alpha_1} P_1)[1] \text{ for } t = m_1+3, \dots, m_1+m_2+2$$ 
 
 Then we have 
 
 $$\theta(\mu_2^+(\Lambda)) =  P_{m} \oplus P_1 \oplus (P_{2} \xrightarrow{\delta_2} P_{m})[1] \oplus \bigoplus\limits_{i=m_1+m_2+3}^{m-1} P_i \oplus \bigoplus\limits_{k=3}^{m_1+2} Q_k \oplus \bigoplus\limits_{t=m_1+3}^{m_1+m_2+2} L_t $$
 
 And the modified Brauer tree of $\End_{\D^b(\Lambda)}(\theta\circ \mu^+_2(\Lambda))$ is a Triple Tree formed by three stars with $m_1, m_2$ and $m_3$ edges respectively, as Figure \ref{fig:stmut1} below shows (on the right). 
 
  \begin{figure}[H]
  \includegraphics[scale=1.0]{npic1.pdf}
  \caption{The modified Brauer trees of $\Lambda$, $\End_{\D^b(\Lambda)}(\mu^+_2(\Lambda))$  and  $\End_{\D^b(\Lambda)}(\theta\circ \mu^+_2(\Lambda))$ respectively.}
  \label{fig:stmut1}
\end{figure}

 \item Now what remains is to mutate each of the three stars into the desired trees $G_1$, $G_2$, $G_3$ in the same fashion as it has been just done in the Double Edge case.  With this aim in view, we first introduce an internal labeling of edges of each of the trees $G_i$ using the depth-first algorithm. In each of the trees $G_i$ there is a distinguished vertex which is one of the three vertices of the central triangle of $G$. We choose this vertex to be the root of the corresponding tree and label the edges of $G_1$, $G_2$ and $G_3$ with integers from $3$ to $m_1+2$, from $m_1+3$ to $m_1+m_2+2$, and from $m_1+m_2+3$ to $m-1$ respectively, same as in the corresponding star trees we obtained on the previous step. Similarly as for the Double Edge case, we also introduce the maps $\varphi_i \colon E(G_i) \to \{0, \dots, m_i - 1\}$ and $\psi_i \colon E(G_i) \setminus \varphi_i^{-1}(0) \to E(G_i)$ for each $i = 1, 2, 3$. In particular, $\varphi_1(3) = \varphi_2(m_1+3) = \varphi_3(m_1+m_2+3) = 0$. We have 
 
 $$\mu^{st}_{G_1} := (\mu_{m_1 + 2}^+)^{\varphi_1(m_1+2)} \circ \dots \circ (\mu_3^+)^{\varphi_1(3)} $$
 $$\mu^{st}_{G_2} := (\mu_{m_1 + m_2+2}^+)^{\varphi_2(m_1 + m_2+2)} \circ \dots \circ (\mu_{m_1+3}^+)^{\varphi_2(m_1+3)}$$ 
 $$\mu^{st}_{G_3} := (\mu_{m-1}^+)^{\varphi_3(m-1)} \circ \dots \circ (\mu_{m_1+m_2+3}^+)^{\varphi_3(m_1+m_2+3)}$$ 
 \end{enumerate}
 
 In the light of the discussion above, the following lemma is straightforward. 
 
 \begin{lemma} Let $\mu_G^{st} = \mu^{st}_{G_3} \circ \mu^{st}_{G_2} \circ \mu^{st}_{G_1} \circ \theta \circ \mu_2^+$. Then the modified Brauer tree of $\End_{\D^b(\Lambda)}(\mu_G^{st}(\Lambda))$ is $G$. The labeling of edges induced by $\mu_G^{st}$ is such that the edges forming the central triangle are labeled with $1, 2$ and $m$, and the labeling of edges inside each of the three trees is as described above, according to the depth-first algorithm. In addition, the indecomposable summands of the tilting complex $T^{st}_G = \mu_G^{st}(\Lambda)$ are as follows 
 
 $$(T^{st}_G)_{m} = P_{m}, \quad (T^{st}_G)_{2} =  (P_{2} \xrightarrow{\delta_2} P_{m})[1], \quad (T^{st}_G)_1 = P_1$$ 
  
$$ (T^{st}_G)_i = P_i \text{ if } \varphi_3(i) = 0 $$
$$ (T^{st}_G)_i = \bigg(P_i \to P_{\psi_3(i)}\bigg)[\varphi_3(i)] \text{ if } \varphi_3(i) \neq 0$$
where  $m_1 + m_2+3 \leq i \leq m - 1 \ \ (i \in E(G_3))$, 

$$ (T^{st}_G)_i = L_i, \text{ if } \varphi_2(i) = 0 $$
$$ (T^{st}_G)_i = \bigg(P_i \to P_{\psi_2(i)}\bigg)[\varphi_2(i)+1], \text{ if } \varphi_2(i) \neq 0 $$
where $m_1 + 3 \leq i \leq m_1 + m_2 + 2 \ \ (i \in E(G_2))$, and, finally, 

$$ (T^{st}_G)_i = Q_i, \text{ if } \varphi_1(i) = 0 $$
$$ (T^{st}_G)_i = \bigg(P_i \to P_{\psi_1(i)}\bigg)[\varphi_1(i)+2], \text{ if } \varphi_1(i) \neq 0 $$
where $3 \leq i \leq m_1+2 \ \ (i \in E(G_1))$. 

We will say that $\mu_G^{st}$ is the standard sequence of mutations for $G$. 

 \end{lemma}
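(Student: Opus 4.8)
The plan is to follow the three-stage construction recorded immediately before the statement and to verify each stage against the mutation rules of Section~\ref{sec:mutmodel}, reducing the final stage to the Double Edge computation of Lemma~\ref{thm:steared}. Concretely, one traces the modified Brauer tree together with the indecomposable summands of the tilting complex through $\mu_2^+$, then through $\theta$, and finally through $\mu^{st}_{G_3} \circ \mu^{st}_{G_2} \circ \mu^{st}_{G_1}$, checking at each point that the listed complex is the cone of the relevant minimal left approximation.

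First I would verify stage~(1): applying $\mu_2^+$ to $\Lambda$ is a mutation of a Double Edge tree at its double edge, so the corresponding rule from Section~\ref{sec:mutmodel} gives the mutated summand $(P_2 \xrightarrow{\delta_2} P_m)[1]$, leaves every other $P_i$ unchanged, and produces the modified Brauer tree shown in the middle of Figure~\ref{fig:stmut1}. Stage~(2) is the computation of $\theta(\mu_2^+(\Lambda))$: here the squared mutations $(\mu_j^+)^2$ for $3 \le j \le m_1+2$ produce the length-two complexes $Q_k$ and assemble the central triangle on the three distinguished vertices, while the subsequent single mutations $\mu_{m_1+3}^+, \dots, \mu_{m_1+m_2+2}^+$ produce the complexes $L_t$. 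Tracking the modified Brauer tree through this sequence with the rules of Section~\ref{sec:mutmodel} yields the Triple Tree formed by three stars with $m_1, m_2, m_3$ edges shown on the right of Figure~\ref{fig:stmut1}.

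The decisive point for stage~(3) is that the central triangle plays exactly the inert role that the double edge plays in Lemma~\ref{thm:steared}: the labeling inside each $G_n$ is depth-first starting from its central-triangle vertex, so its root edge has level $0$ and is never mutated (the exponent $\varphi_n$ vanishes there), and every mutation occurring in $\mu^{st}_{G_n}$ is of the \emph{mutation inside a Brauer tree} type, sliding edges only along edges of the same tree. Hence $\mu^{st}_{G_1}, \mu^{st}_{G_2}, \mu^{st}_{G_3}$ act on three disjoint subtrees without disturbing one another or the central edges $1, 2, m$, and the computation for each star is identical to that of Lemma~\ref{thm:steared}, carried out by induction on the level. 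The only difference is the base summand carried by each root --- $P_i$ for $G_3$, $L_i$ for $G_2$, $Q_i$ for $G_1$ --- which already sits in shift $0$, $1$, $2$ respectively; consequently mutating an edge $i$ of $G_n$ of level $\varphi_n(i)$ produces $(P_i \to P_{\psi_n(i)})$ shifted by $\varphi_3(i)$, $\varphi_2(i)+1$, or $\varphi_1(i)+2$ according to whether $n = 3, 2, 1$, which is precisely the claimed form of $(T^{st}_G)_i$. This simultaneously identifies the modified Brauer tree as $G$ and its induced labeling as the depth-first one.

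The main obstacle I anticipate is stage~(2): the bookkeeping of the squared mutations $(\mu_j^+)^2$, confirming both the length-two shape of the complexes $Q_k$ and the emergence of the central triangle with its prescribed cyclic orderings. A subtler structural point, rather than a computational one, is the independence claim in stage~(3) --- that mutating inside one subtree never interacts with the central triangle or with the other subtrees --- which rests precisely on the vanishing of each $\varphi_n$ at its root, so that only edges of level $\ge 1$, lying strictly inside a single tree, are ever mutated. Once these two points are secured, the remaining verifications are the same routine cone computations as in Lemma~\ref{thm:steared}.
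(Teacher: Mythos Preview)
Your proposal is correct and follows exactly the approach the paper takes: the paper states only that ``in the light of the discussion above, the following lemma is straightforward,'' and your three-stage verification --- tracing $\mu_2^+$, then $\theta$, then reducing each $\mu^{st}_{G_n}$ to the Double Edge computation of Lemma~\ref{thm:steared} via the independence of the three subtrees from the central triangle --- is precisely the expansion of that sentence.
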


\section{Autoequivalences of the form \texorpdfstring{$(\widetilde{\mu}_{\mu_j^+(G)}^{st})^{-1} \circ \mu_j^+ \circ \mu_G^{st}$}{Lg}}\label{sec:stanyst} 
 
 In this section we finish the proof of the main result, fulfilling step III of the plan sketched in Section \ref{sec:mainres}. Recall that, in the notation of Section \ref{sec:mainres}, the aim of this step is to express the complexes $(\widetilde{\mu}_{G^1}^{st})^{-1} \circ \mu_{j_1}^+(\Lambda)$, $\mu_{j_k}^+ \circ \widetilde{\mu}_{G^{k-1}}^{st}(\Lambda^{k-1})$ and $(\widetilde{\mu}_{G^{s+1}}^{st})^{-1} \circ \mu_{j_{s+1}}^+ \circ \widetilde{\mu}_{G^s}^{st}(\Lambda^s)$ as images of $\Lambda$ under some series of spherical twists $t_i$ modulo $\Pic(\Lambda)$ and the shift, for any indices $j_1, \dots, j_k \in \{1, \dots, m\}$.

We will begin with the most tedious part of this plan, namely, the complexes of the form $(\widetilde{\mu}_{G^{s+1}}^{st})^{-1} \circ \mu_{j_{s+1}}^+ \circ \widetilde{\mu}_{G^s}^{st}(\Lambda^{\sigma_s})$. First let us get rid of the excessive notation. Let $G$ be any modified Brauer tree. Recall that by $\mu_G^{st}$ we denote the standard sequence of mutations for $G$. By $\mu_j^+(G)$ we will denote the modified Brauer tree of $\End_{\D^b(\Lambda)}(\mu_j^+ \circ \mu_G^{st}(\Lambda))$. Let $\tau$ be the permutation one needs to apply to the standard labeling of edges of $\mu_j^+(G)$ to obtain be the labeling emerging via $\mu_j^+ \circ \mu_G^{st}$. As before, apply $\tau$ to the indices of mutations in $(\mu_{\mu_j^+(G)}^{st})^{-1}$ and denote the resulting series of mutations by $(\widetilde{\mu}_{\mu_j^+(G)}^{st})^{-1}$. It is sufficient to express the complexes of the form $(\widetilde{\mu}_{\mu_j^+(G)}^{st})^{-1} \circ \mu_j^+ \circ \mu_G^{st}(\Lambda)$ as images of $\Lambda$ under a series of spherical twists $t_i$ modulo $\Pic(\Lambda)$ and the shift, for any modified Brauer tree $G$ and any index $j = 1, \dots, m$. Whenever we write that two tilting complexes are equal, we mean that they are isomorphic modulo $\Pic(\Lambda)$. 

\subsection{Double Edge type}
First we are going to assume that $G$ is a Double Edge tree. Let $\varphi = \varphi_G, \psi = \psi_G$ be the maps introduced in Section \ref{sec:stmut}. With only a few exceptions, here we shall essentially repeat the results of \cite{Alexandra}. In each of the cases we will denote the autoequivalence $(\widetilde{\mu}_{\mu_j^+(G)}^{st})^{-1} \circ \mu_j^+ \circ \mu_G^{st}$ by $\mathcal{H}$ and the tilting complex $\mathcal{H}(\Lambda)$ by $\mathcal{T}$. By $\mathcal{T}_i$ we will always denote $\mathcal{H}(P_i)$. However, it will be often convenient to consider the tilting complex $\mathcal{T}$ with the labeling of summands coming from the series of mutations $(\widetilde{\mu}_{\mu_j^+(G)}^{st})^{-1} \circ \mu_j^+ \circ \mu_G^{st}$. In this case we will write $\mathcal{T}'_i$ instead of $\mathcal{T}_i$, meaning that $\mathcal{T}_i'$ is the summand of $\mathcal{T}$ corresponding to the edge of the modified Brauer tree of $\End(\mathcal{T})$ labeled with $i$. In addition, we will denote the tilting complex $\mu_j^+ \circ \mu_G^{st}(\Lambda)$ by $T$. 

\begin{enumerate}
\item Let $j$ be such that the mutation $\mu_j^+$ in $(\widetilde{\mu}_{\mu_j^+(G)}^{st})^{-1} \circ \mu_j^+ \circ \mu_G^{st}$ {\it does not involve irregular edges}. More precisely, (1) $j$ is not a double edge, i.e. $j \neq 1, 2$, and (2) $j$ is not followed by the double edge in the cyclic ordering, i.e. $j \neq 3$. This means that the series of mutations $(\widetilde{\mu}_{\mu_j^+(G)}^{st})^{-1} \circ \mu_j^+ \circ \mu_G^{st}$ does not ``touch'' irregular edges as well. Namely, for every $i$ such that there is a mutation $\mu_i^+$ in this composition, we have $\Hom_{D^b(\Lambda)}(P_i, P_1 \oplus P_2) = 0$. Thus, ${\mathcal{T}_1 = P_1, \mathcal{T}_2 = P_2}$ and $\mathcal{H}$ restricts to an autoequivalence of $\thick(\bigoplus_{k=3}^m P_k)$, which is equivalent to the bounded derived category of the Brauer star algebra with $m-2$ indecomposable projective modules. Moreover, observe that our construction of the standard series of mutations for a Double Edge tree is in an obvious way consistent with the construction of the standard series of mutations for a Brauer tree in \cite{Alexandra}. To conclude, in this case we can directly make use of the calculations already fulfilled in \cite{Alexandra}. Let us briefly repeat the results in our notations. 
\begin{enumerate}
    \item Suppose $j$ is pendant and there is an edge $l$ following $j$ in the cyclic ordering on the same level as $j$, i.e. $\varphi(j) = \varphi(l)$. Then $\mathcal{T} = \Lambda$ and there is nothing to express. 
    \item Suppose $j$ is pendant and there is no edge following $j$ in the cyclic ordering on the same level as $j$. In particular, $\psi(j) = j-1$. Then 
    $$\mathcal{T}_i = P_i, \quad i \neq j,j-1 $$ 
    $$\mathcal{T}_{j-1} = (P_j \xrightarrow{\beta_j} P_{j-1} \xrightarrow{soc} P_{j-1})[2]$$ 
    And we have $\mathcal{H}(\Lambda) = t_j(\Lambda)$. 
    \item Suppose $j$ is not pendant and there is an edge $l$ following $j$ in the cyclic ordering with $\varphi(j) = \varphi(l)$. Let $h$ be the other edge following $j$ in the cyclic ordering (with $\varphi(h) = \varphi(j)+1$). Then
    $$ \mathcal{T}_j = P_h$$
    $$\mathcal{T}_i = (P_h \xrightarrow{soc} P_h \xrightarrow{\beta_h \dots \beta_i} P_{i-1}), \quad j+1 \leq i \leq h$$   
    $$ \mathcal{T}_i = P_i \text{ otherwise}$$
   We have $\mathcal{H}(\Lambda) = (\mu_j^-)^2 \circ (\mu_{j+1}^-)^2 \circ \dots \circ (\mu_{h-1}^-)^2(\Lambda) = t_{j+1}^{-1} \circ \dots \circ t_h^{-1}(\Lambda)$. 
    \item Finally, suppose $j$ is not pendant and there is no edge on the same level as $j$, following $j$ in the cyclic ordering. In particular, $\psi(j) = j-1$. Let $l$ be the other edge following $j$ in the cyclic ordering, with $\varphi(l) = \varphi(j) +1$, and let $h$ be the edge following $l$ in the cyclic ordering with $\varphi(h) = \varphi(l)+1$. Then we have 
    
    $$ \mathcal{T}_i = P_i, \quad 3 \leq i \leq j-2 \text{ and } h+1 \leq i \leq m$$
   $$\mathcal{T}_{j-1} = (P_l \xrightarrow{\beta_l \dots \beta_j} P_{j-1} \xrightarrow{soc} P_{j-1})[2] $$
   $$\mathcal{T}_i = (P_l \xrightarrow{\begin{pmatrix} soc \\ -\beta_l\dots\beta_j \end{pmatrix}} P_{j-1} \oplus P_l \xrightarrow{\begin{pmatrix} 0 & -soc \\ \beta_l \dots \beta_{i+1} & 0 \end{pmatrix}} P_{j-1} \oplus P_i)[2], \quad j \leq i \leq l-1$$ 
   $$\mathcal{T}_i = (P_{i+1} \xrightarrow{\beta_{i+1} \dots \beta_{j}} P_{j-1} \xrightarrow{soc} P_{j-1})[2], \quad l \leq i \leq h-1$$ 
   $$ \mathcal{T}_h = P_{j-1}$$ 
   We have 
   $$\mathcal{H}(\Lambda) = (\mu_h^+)^2 \circ \dots \circ (\mu_{l+1}^+)^2 \circ (\mu_j^-)^2 \circ \dots \circ (\mu_{l-1}^-)^2 \circ (\mu_{l}^+)^2 \circ \dots \circ (\mu_j^+)^2(\Lambda) = t_h \circ \dots \circ t_{l+1} \circ t_j^{-1} \circ \dots \circ t_{l-1}^{-1} \circ t_l \circ \dots \circ t_j(\Lambda).$$ 
\end{enumerate}

\item Now what is left to consider are the remaining several cases that are unique to our context and cannot be derived from \cite{Alexandra}, because the mutation $\mu_j^+$ in $(\widetilde{\mu}_{\mu_j^+(G)}^{st})^{-1} \circ \mu_j^+ \circ \mu_G^{st}$ {\it involves irregular edges}. 

\begin{enumerate}

\item Suppose that $j=3$ and $3$ is pendant. We have 
 $$ T_3 = (P_{3} \xrightarrow{\begin{pmatrix}\alpha_1 \\ \alpha_2 \end{pmatrix}} P_1 \oplus P_{2})[1] $$ 
  \begin{figure}[H]
  \includegraphics[scale=1.2]{npic2.pdf}
  \label{fig:cases2}
\end{figure}
The tree $\mu_j^+(G)$ is again a Double Edge tree, the levels of whose edges are the same as in $G$. Hence we have  ${(\widetilde{\mu}^{st}_{\mu_{3}^+(\Gamma)})^{-1} = (\mu_{4}^-)^{\varphi(4)} \circ \dots \circ (\mu_m^-)^{\varphi(m)}}$ and 
$$\mathcal{T}_{i} = P_{i+1}, \quad 3 \leq i \leq m-1 $$
$$ \mathcal{T}_{1} = P_1,  \mathcal{T}_{2} = P_2$$
$$\mathcal{T}_{m} = (P_{3} \xrightarrow{\begin{pmatrix}\alpha_1 \\ \alpha_2 \end{pmatrix}} P_1 \oplus P_{2})[1]. $$
By Lemma \ref{twistseq1}, one can immediately see that $$\mathcal{H}(\Lambda) = \mathcal{F}_1(\Lambda) = t_m \circ \dots t_4 \circ t_3 \circ t_1 \circ t_2 \circ t_3[-2](\Lambda).$$ 
\item\label{difcase1} Suppose that $j=3$ and $j$ is not pendant. Let $h$ be the regular edge following $3$ in the cyclic ordering and let $l$ be the smallest integer such that $l > 3$ and $\varphi(l) = 0$ (if it exists). The complex $T_3$ is the cone of a morphism from $P_3$ to ${P_1\oplus P_2 \oplus (P_h \xrightarrow{\beta_h \dots \beta_4} P_3)[1]}$, i.e. $${T_3 = \mathcal{T}_3' = (P_h \xrightarrow{\begin{pmatrix} \beta_h \dots \beta_4 \alpha_1 \\ \beta_h \dots \beta_4 \alpha_2 \end{pmatrix}} P_1 \oplus P_2)[1]}.$$

  \begin{figure}[H]
  \centering
  \includegraphics[scale=1.3]{npic3.pdf}
  \label{fig:cases3}
\end{figure}

Now we need to apply $(\widetilde{\mu}^{st}_{\mu^+_3(\Gamma)})^{-1}$ to $T$. First we apply the part containing mutations with indices from $h+1$ to $l-1$. For every $i$ such that $h+1 \leq i \leq l-1$ we have $\varphi_{\mu_3^+(G)}(i) = \varphi(i)-1$. Hence $\mathcal{T}_i'$ is the cone of a morphism from $\mathcal{T}_3'$ to $(P_i \xrightarrow{\beta_i \dots \beta_{h+1}} P_h)[2]$ shifted by $-1$, i.e. ${\mathcal{T}_i' = (P_i \xrightarrow{\begin{pmatrix} \beta_i \dots \beta_4 \alpha_1 \\ \beta_i \dots \beta_4 \alpha_2 \end{pmatrix}} P_1\oplus P_2)[1]}$, for every $i$ such that $h+1 \leq i \leq l-1$. Then we apply the part of $(\widetilde{\mu}^{st}_{\mu^+_3(\Gamma)})^{-1}$ containing mutations with indices from $4$ to $h-1$. For every $i$ such that $4 \leq i \leq h-1$ we have $\varphi_{\mu_3^+(G)}(i) = \varphi(i)+1$. It is not difficult to see that $\mathcal{T}_i'$ is the cone of a morphism from $\mathcal{T}_3'$ to $(P_h \xrightarrow{\beta_h \dots \beta_{i+1}} P_i)[1]$, shifted by $-1$. More precisely, for all $i$ such that $ 4 \leq i \leq h-1$ we have $${\mathcal{T}_i = (P_h \xrightarrow{\begin{pmatrix} soc \\ -\beta_h \dots \beta_4 \alpha_1 \\ -\beta_h \dots \beta_4 \alpha_2 \end{pmatrix}} P_h \oplus P_1 \oplus P_{2} \xrightarrow{ \begin{pmatrix} \beta_h \dots \beta_{i+1} & \delta_1 \beta_m \dots \beta_{i+1} & \delta_2 \beta_m \dots \beta_{i+1}  \end{pmatrix}} P_i)[1]}.$$ Finally, since $\varphi_{\mu_3^+(G)}(h) = 1$, we apply $\mu_h^-$. The complex $\mathcal{T}_h'$ is the cone of a morphism from $(P_h \xrightarrow{\begin{pmatrix} \beta_h \dots \beta_4 \alpha_1 \\ \beta_h \dots \beta_4 \alpha_2 \end{pmatrix}} P_1 \oplus P_2)[1]$ to $(P_h \xrightarrow{\beta_h \dots \beta_4} P_3)[1]$, shifted by $-1$. 
To sum up, we have: 

$$\mathcal{T}_{i} = \mathcal{T}_{i+l-3}' = P_{i+l-3}, \quad 3 \leq i \leq m-l+3 $$
$$\mathcal{T}_{m-l+4} = \mathcal{T}_3'  = (P_h \xrightarrow{\begin{pmatrix} \beta_h \dots \beta_4 \alpha_1 \\ \beta_h \dots \beta_4 \alpha_2 \end{pmatrix}} P_1\oplus P_{2})[1]$$
$$\mathcal{T}_{m-l+5} = \mathcal{T}_h' = (P_h \xrightarrow{\begin{pmatrix} soc \\ -\beta_h\dots\beta_4\alpha_1 \\ -\beta_h \dots \beta_4 \alpha_2 \end{pmatrix}}P_h \oplus P_1 \oplus P_2 \xrightarrow{\begin{pmatrix} \beta_h \dots \beta_4 & \delta_1 \beta_m \dots \beta_4 & \delta_2\beta_m \dots \beta_4 \end{pmatrix}} P_3)[1]$$

$$\mathcal{T}_{i} = \mathcal{T}_{i-m+l-2}' =  $$

$$(P_h \xrightarrow{\begin{pmatrix} soc \\ -\beta_h\dots\beta_4\alpha_1 \\ -\beta_h \dots \beta_4 \alpha_2 \end{pmatrix}} P_h \oplus P_1 \oplus P_{2} \xrightarrow{\begin{pmatrix} \beta_h \dots \beta_{i-m+l-1} & \delta_1 \beta_m \dots \beta_{i-m+l-1} & \delta_2\beta_m \dots \beta_{i-m+l-1} \end{pmatrix}} P_{i-m+l-2})[1], $$ $$ m-l+6 \leq i \leq m-l+h+1. $$

$$\mathcal{T}_{i} = \mathcal{T}_{i-m+l-1}' = (P_{i-m+l-1} \xrightarrow{\begin{pmatrix} \beta_{i-m+l-1} \dots \beta_4 \alpha_1 \\ \beta_{i-m+l-1} \dots \beta_4 \alpha_2 \end{pmatrix}} P_1 \oplus P_{2})[1], \quad m-l+h+2 \leq i \leq m.$$ 

We have 
$\mathcal{H}(\Lambda) = t_{m-l+5}^{-1} \circ \dots \circ t_{m-l+h+1}^{-1} \circ (t_m \circ \dots \circ t_4 \circ t_3 \circ t_1 \circ t_2 \circ t_3)^{l-3}[-2(l-3)](\Lambda)$ (see Section \ref{sec:app} for details).
\item\label{difcase2} The cases $j=1$ and $j=2$ are identical, so we can assume without loss of generality that $j = 1$. Then $T_1 = \mathcal{T}_1' = (P_1 \xrightarrow{\delta_1\beta_m\dots\beta_{h+1}} P_h)[1]$, where $h$ is the edge which follows the double edge in the cyclic ordering. The tree $\mu_3^+(G)$ is now a Triple Tree, with central triangle formed by $1, 2$ and $h$. At least one of its three trees is empty and the other two trees are the subtrees of $G$ formed by edges from $3$ to $h-1$ and from $h+1$ to $m$ respectively. Now we have to pick an ordering of the trees in our configuration $\mu_1^+(G) = (G_1, G_2, G_3)$. We choose the empty tree to play the role of $G_1$. Let $\varphi_i$ be the level function of $G_i$, where $i=1,2,3$. Now apply $(\widetilde{\mu}^{st}_{\mu^+_1(\Gamma)})^{-1}$ to $T$. First we apply the part making the two (possibly) non-empty trees $G_2$ and $G_3$ into the stars. The levels of the edges in $\mu_1^+(\Gamma)$ are as follows 
    $$\varphi_2(i) = \varphi(i), \quad h+1 \leq i \leq m.$$
    $$\varphi_3(i) = \varphi(i) - 1, \quad 3 \leq i \leq h-1. $$
    Hence after this step we obtain a complex $\mathcal{T}''$ with $\mathcal{T}_i'' = P_i$ for $3\leq i \leq h-1$, ${\mathcal{T}_i'' = \mathcal{T}_i' = (P_i \xrightarrow{\beta_i \dots \beta_{h+1}} P_h)[1]}$ for $h+1 \leq i \leq m$ and $\mathcal{T}_h'' = \mathcal{T}_h' = P_h$. After that we apply $\mu_{3}^-\circ \dots \circ \mu_{h-1}^-$ and, finally, $\mu_{2}^-$ (see the picture below). 
    
    \begin{figure}[H]
    \centering
   \includegraphics[scale=1.2]{npic4.pdf}
  \label{fig:cases4}
\end{figure}
    Thus,
     $$\mathcal{T}_1 = \mathcal{T}_h' = P_h$$
         $$\mathcal{T}_2 = \mathcal{T}_2' = (P_1 \xrightarrow{\delta_1\beta_m \dots \beta_{h+1}} P_h \xrightarrow{\beta_h \beta_4 \alpha_2} P_2)[1].$$
         $$\mathcal{T}_i = \mathcal{T}_i' = (P_h \xrightarrow{\beta_h \dots \beta_{i+1}} P_i), \quad 3 \leq i \leq h-1.$$
         $$\mathcal{T}_i = \mathcal{T}_{i+1}' = (P_{i+1} \xrightarrow{\beta_{i+1} \dots \beta_{h+1}} P_h)[1], \quad h \leq i \leq m-1.$$
         $$\mathcal{T}_m = \mathcal{T}_1' = (P_1 \xrightarrow{\delta_1\beta_m\dots\beta_{h+1}} P_h)[1].$$
       
   We have $\mathcal{H}(\Lambda) = t_m\dots t_4 t_3 t_1 t_4^{-1} \dots t_h^{-1}[-1](\Lambda)$ (see Section \ref{sec:app} for details).

\end{enumerate}
\end{enumerate} 

\subsection{The Triple Tree type}
Now let $G$ be of the Triple Tree type and let $G_1, G_2$ and $G_3$ be the three trees forming it. We will need the following easy auxiliary lemma. 

\begin{lemma}{(\cite{Alexandra}, Lemma 1)}\label{equivlemma} Let $\Phi: D^b(\Lambda) \to D^b(\Lambda)$ be a triangular equivalence. Let $\mathcal{C}$ be a subcategory of $D^b(\Lambda)$ and let $X$ be an object of $D^b(\Lambda)$. Let $f \colon X \to C$ be a minimal left approximation of $X$ with respect to $\mathcal{C}$. Then $\Phi(f) \colon \Phi(X) \to \Phi(C)$ is a minimal left approximation of $\Phi(X)$ with respect to $\Phi(\mathcal{C})$. 
\end{lemma}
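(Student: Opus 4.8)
The plan is to prove the statement purely formally, by transporting the two defining properties of a minimal left approximation — the preenvelope property and left-minimality — along the equivalence $\Phi$, using only that $\Phi$ is fully faithful (hence induces bijections on all Hom-spaces), is compatible with composition, and carries isomorphisms to isomorphisms. In particular, although $\Phi$ is assumed to be a triangular equivalence, the triangulated structure will play no role: only the additive/categorical content is needed.

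First I would record the defining data. Writing the minimal left $\mathcal{C}$-approximation condition on $f \colon X \to C$ as: (i) $C \in \mathcal{C}$ and for every $C' \in \mathcal{C}$ the map $f^* \colon \Hom(C, C') \to \Hom(X, C')$ is surjective; and (ii) every $g \in \End(C)$ with $gf = f$ is an isomorphism. Note that $\Phi(C) \in \Phi(\mathcal{C})$ automatically, so $\Phi(f)$ is at least a morphism from $\Phi(X)$ into $\Phi(\mathcal{C})$.

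For the approximation property of $\Phi(f)$, I would take an arbitrary object $D \in \Phi(\mathcal{C})$, write it as $D \cong \Phi(C')$ with $C' \in \mathcal{C}$, and an arbitrary morphism $h \colon \Phi(X) \to \Phi(C')$. Full faithfulness produces a unique $\tilde{h} \colon X \to C'$ with $\Phi(\tilde{h}) = h$; applying (i) to $\tilde{h}$ yields $g \colon C \to C'$ with $\tilde{h} = g f$, and then $h = \Phi(g) \circ \Phi(f)$ exhibits $h$ as factoring through $\Phi(f)$. Hence $\Phi(f)^* \colon \Hom(\Phi(C), D) \to \Hom(\Phi(X), D)$ is surjective. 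For left-minimality, I would take $\eta \in \End(\Phi(C))$ with $\eta \circ \Phi(f) = \Phi(f)$, use full faithfulness to write $\eta = \Phi(g)$, deduce $\Phi(gf) = \Phi(f)$ and hence $gf = f$ by faithfulness, apply (ii) to conclude that $g$ is an isomorphism, and finally observe that $\Phi(g) = \eta$ is then an isomorphism since functors preserve isomorphisms. Together these establish both halves of the claim.

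I do not anticipate a genuine obstacle here: the statement is a warm-up lemma, and everything is a direct consequence of full faithfulness. The only point requiring a little care is the bookkeeping around the subcategory $\Phi(\mathcal{C})$ — one must note that every object of $\Phi(\mathcal{C})$ is isomorphic to $\Phi$ of an object of $\mathcal{C}$, so that the bijections furnished by full faithfulness apply to exactly the Hom-spaces appearing in the approximation condition — and the trivial but essential remark that a fully faithful functor reflects the identities $gf = f$ and detects/preserves isomorphisms, which is what makes the minimality condition transport cleanly.
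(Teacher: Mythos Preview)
Your argument is correct and complete; this is exactly the standard transport-along-an-equivalence proof one would expect for such a bookkeeping lemma. Note that the paper does not actually prove this statement but merely cites it from \cite{Alexandra}, so there is no in-paper proof to compare against; your direct verification via full faithfulness is the natural way to establish it.
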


\begin{enumerate}
    \item Let $j$ be a regular edge of $G$ belonging to some tree $G_i$. Suppose that all edges following $j$ in the cyclic ordering also belong to the same tree $G_i$. Then, clearly, we can assume without loss of generality that the other two trees $G_d$, $d \neq i$ are empty. Then we have 
    $$\mu^{st}_G =  \mu^{st}_{G_i} \circ (\mu_{m-1}^+)^{3-i} \circ \dots \circ (\mu_3^+)^{3-i} \circ \mu_2^+. $$ 
    $$ (\widetilde{\mu}^{st}_{\mu^+_j(G)})^{-1} =    \mu_2^- \circ (\mu_{m-1}^-)^{3-i} \circ \dots \circ (\mu_3^-)^{3-i} \circ (\widetilde{\mu}^{st}_{\mu^+_j(G_i)})^{-1}.$$ 
    \noindent Applying Lemma \ref{equivlemma} to the  autoequivalence $\Phi$ of $D^b(\Lambda)$ induced by $\mu^{st}_{G_i} \circ \mu_j^+ \circ (\widetilde{\mu}^{st}_{\mu^+_j(G_i)})^{-1}$, one may assume that $i = 3$, hence $\mathcal{T} =    \mu_2^- \circ (\widetilde{\mu}^{st}_{\mu^+_j(G_i)})^{-1} \circ  \mu_j^+ \circ \mu^{st}_{G_i} \circ \mu_2^+(\Lambda)$. Finally, observe that $\mu_2^+$ commutes with $\Phi$, hence we can conclude that the resulting tilting complex $\mathcal{T}$ is the same as in the corresponding Double Edge case, so there is nothing to prove. 
    
    \item Now let $j$ be a regular edge of $G$ belonging to some tree $G_i$, such that one of the edges following $j$ in the cyclic ordering is a central edge ($1$, $2$ or $m$). 
    
    \begin{enumerate}
        \item First suppose $j \in E(G_i)$ is pendant. If in addition $i = 3$ or $i=2$, it is clear that $\mathcal{T} = \Lambda$, since $\mu_{\mu_j^+(G)}^{st} = \mu_j^+ \circ \mu_{G}^{st}$. Thus, it remains to consider the case $i=1$, $j=3$. We have 
         $$(T^{st}_G)_{3} = Q_3 = \bigg(P_3 \xrightarrow{\begin{pmatrix} -\alpha_1 \\ 
 \alpha_2 \end{pmatrix}} P_1\oplus P_{2} \xrightarrow{\begin{pmatrix} \delta_1 & \delta_2 \end{pmatrix}} P_{m}\bigg) [2]$$ $$(T^{st}_G)_{m} =P_{m}.$$
      
     Hence $$T_3 = \bigg(P_3 \xrightarrow{\begin{pmatrix} -\alpha_1 \\ 
 \alpha_2 \end{pmatrix}} P_1\oplus P_{2} \xrightarrow{\begin{pmatrix} \delta_1 & \delta_2 \end{pmatrix}} P_{m}  \xrightarrow{soc} P_{m}\bigg) [3] .$$
 
 Now we apply $(\widetilde{\mu}_{\mu_j^+(G)}^{st})^{-1}$. Observe that $(\widetilde{\mu}_{\mu_j^+(G)}^{st})^{-1}$ is just $(\mu_{G}^{st})^{-1}$ without $\mu_3^-$. Therefore,
   $$\mathcal{T}_{m-1} = \mathcal{T}_3' =  \bigg(P_3 \xrightarrow{\begin{pmatrix} -\alpha_1 \\ 
 \alpha_2 \end{pmatrix}} P_1\oplus P_{2} \xrightarrow{\begin{pmatrix} \delta_1 & \delta_2 \end{pmatrix}} P_{m}  \xrightarrow{soc} P_{m}\bigg) [3].  $$
 $$\mathcal{T}_i = \mathcal{T}_{i+1}' = P_{i+1}, \quad 3 \leq i \leq m-2.$$
 $$\mathcal{T}_m = \mathcal{T}'_m = P_m, \mathcal{T}_1 = \mathcal{T}_1' = P_1, \mathcal{T}_2 = \mathcal{T}_2' = P_2.$$ 
We have $\mathcal{H}(\Lambda) = t_m^2 \circ t_{m-1} \circ \dots \circ t_4 \circ t_3 \circ t_1 \circ t_2 \circ t_3[-2](\Lambda)$ (see Lemma \ref{twistseq1}).  
 
 \item Now suppose $j$ is not pendant.  
 
 \begin{enumerate}
     \item Let $i=3$, hence $j = m_1+m_2+3$. Let $h$ be the regular edge following $j$ in the cyclic ordering and let $l+1 > m_1+m_2+3$ be the smallest index such that $\varphi_3(l+1) = 0$. If such $l$ does not exist, it means that the subtree `under' the edge $m_1 + m_2+3$ is the whole $G_3$, in which case a few steps in what follows should be skipped. Since this only simplifies the computations, we are not going to consider this case separately.
     
     The graph $\mu_{m_1+m_2+3}^+(G) = (G_1', G_2', G_3')$ is again of the Triple Tree type. 
     \begin{center}
        
      \begin{figure}[H]
      \centering
  \includegraphics[scale=1.1]{npic5.pdf}
  \label{fig:cases5}
\end{figure}
\end{center}
     We order the three trees forming it in the following way. Let $G_3'$ be the tree formed by the edges with indices from $l+1$ to $m-1$ and $G_2'$ be the tree containing the edge $3$. As before, we denote the level functions for $G_1'$, $G_2'$ and $G_3'$ by $\varphi_1', \varphi_2'$ and $\varphi_3'$ respectively. We have $T_{m_1+m_2+3} = (P_h \xrightarrow{\beta_h \dots \beta_4 \alpha_1} P_1)[1]$. The levels of edges in the new tree $G_2'$ are as follows: 
$$\varphi_2'(m_1+m_2+3) = 0, \varphi_2'(h) = 1 $$
$$\varphi_2'(i) = \varphi_3(i)+1, \quad m_1+m_2+4 \leq i \leq h-1$$
$$\varphi_2'(i) = \varphi_3(i)- 1, \quad h+1 \leq i \leq l $$
$$\varphi_2'(i) = \varphi_2(i), m_1 + 3 \leq i \leq m_1+m_2+2 $$
 Now we apply $(\widetilde{\mu}^{st}_{\mu^+_{m_1+m_2+3}(G)})^{-1}$ to $T$ in several steps. First we apply the part mutating the three trees $G_1', G_2'$ and $G_3'$ into stars, obtaining a complex $\mathcal{T}''$. We have $$\mathcal{T}'' = \mu_h^- \circ (\mu_{h-1}^-)^{\varphi_3(h-1)+1} \circ \dots \circ (\mu_{4}^-)^{\varphi_3(4)+1} \circ (\mu_{l}^-)^{\varphi_3(t)-1} \circ \dots \circ (\mu_{h+1}^-)^{\varphi_3(h+1)-1} \circ (\mu^{st}_{G_3'})^{-1} \circ (\mu^{st}_{G_1'})^{-1} (T). $$
 $$\mathcal{T}_i'' = (P_i \xrightarrow{\beta_i \dots \beta_4 \alpha_1} P_1)[1], \quad h+1 \leq i \leq l,$$
 $$\mathcal{T}_i'' = \bigg(P_h \xrightarrow{\begin{pmatrix} soc \\ -\beta_h \dots \beta_4 \alpha_1 \end{pmatrix}} P_h \oplus P_1 \xrightarrow{\begin{pmatrix} \beta_h\dots\beta_{i+1} & \delta_1 \dots \beta_{i+1}  \end{pmatrix}} P_{i}\bigg)[1], \quad  m_1+m_2+4 \leq i \leq h-1, $$ 
 $$\mathcal{T}_i'' = \mathcal{T}_i' = P_i, \quad l+1 \leq i \leq m-1 \text{ and } 3 \leq i \leq m_1+2.$$
 The complex $\mathcal{T}_h''$ is the cone of a morphism of ${(P_h \xrightarrow{\beta_h \dots \beta_4 \alpha_1} P_1)[1]}$ to ${(P_h \xrightarrow{\beta_h \dots \beta_{m_1 +m_2+4}} P_{m_1+m_2+3})[1]}$, shifted by $-1$. That is, 
$$\mathcal{T}''_h = \bigg(P_h \xrightarrow{\begin{pmatrix} soc \\ -\beta_h \dots \beta_4 \alpha_1 \end{pmatrix}} P_h \oplus P_1 \xrightarrow{\begin{pmatrix} \beta_h\dots\beta_{m_1+m_2+4} & \delta_1 \dots \beta_{m_1+m_2+4}  \end{pmatrix}} P_{m_1+m_2+3}\bigg)[1]. $$
Applying the rest of $(\widetilde{\mu}^{st}_{\mu^+_{m_1+m_2+3}(G)})^{-1}$, we get 
$$\mathcal{T}_{i}= \mathcal{T}_{i-1}' = \bigg(P_h \xrightarrow{soc} P_h \xrightarrow{\beta_h\dots\beta_{i}} P_{i-1}\bigg), \quad m_1+m_2+5 \leq i \leq h, $$
$$\mathcal{T}_{m_1+m_2+3} = P_h, $$
$$\mathcal{T}_{m_1+m_2+4} = \mathcal{T}_h' = (P_h \xrightarrow{soc} P_h \xrightarrow{\beta_4\dots\beta_{m_1+m_2+4}} P_{m_1+m_2+3}) $$
$$\mathcal{T}_i = \mathcal{T}_i' = P_i, \text{ otherwise.}$$
$$\mathcal{H}(\Lambda) = t_{m_1+m_2+4}^{-1} \circ \dots \circ t_{h}^{-1}(\Lambda).$$
The next two cases are very similar to the one just discussed. 
\item Let $i=2$, thus, $j=m_1+3$. Let $h$ be the regular edge following $j$ in the cyclic ordering and let $l+1 > m_1+3$ be the smallest index such that $\varphi_2(l+1) = 0$. Proceeding in the same way as in the previous case, we have 

$$T_{m_1+3} = (P_h \xrightarrow{ \begin{pmatrix} -\beta_h \dots \beta_4 \alpha_1 \\ \beta_h \dots \beta_4 \alpha_2 \end{pmatrix}} P_1\oplus P_{2} \xrightarrow{ \begin{pmatrix} \delta_1 & \delta_2 \end{pmatrix}} P_{m})[2] = Q_h $$

After we apply the part of $(\widetilde{\mu}^{st}_{\mu^+_{m_1+3}(G)})^{-1}$ to $T$ mutating the new tree $G_1'$ into a star, we obtain a complex $\mathcal{T}''$: 

$$\mathcal{T}''_i = Q_i, \quad h+1 \leq i \leq l.$$
$$\mathcal{T}''_i = \bigg(P_h \xrightarrow{\begin{pmatrix} soc \\ -\beta_h\dots\beta_4\alpha_1 \\ -\beta_h \dots \beta_4\alpha_2 \end{pmatrix}}P_h \oplus P_1 \oplus P_{2} \xrightarrow{ \begin{pmatrix} \beta_h\dots\beta_{i+1} && \delta_1\dots \beta_{i+1} && -\delta_2\dots \beta_{i+1} \\ \beta_h \dots \alpha_1\delta_1  && \delta_1 && -\delta_2 \end{pmatrix}} P_i \oplus P_{m} \bigg)[2], $$ $$\quad m_1 +4 \leq i \leq h-1.$$
$$\mathcal{T}''_h = \bigg(P_h \xrightarrow{\begin{pmatrix} soc \\ -\beta_h\dots\beta_4\alpha_1\\ -\beta_h\dots\beta_4\alpha_2 \end{pmatrix}}P_h \oplus P_1 \oplus P_{2} \xrightarrow{ \begin{pmatrix}  \beta_h\dots\beta_{m_1+4} && \delta_1\dots \beta_{m_1+4} && -\delta_2\dots \beta_{m_1+4} \\ \beta_h \dots \alpha_1\delta_1 && \delta_1 && -\delta_2 \end{pmatrix}} P_{m_1+3} \oplus P_{m} \bigg)[2]. $$

Applying the rest of $(\widetilde{\mu}^{st}_{\mu^+_{m_1+3}(G)})^{-1}$, we get 
$$\mathcal{T}_{m_1+3} = \mathcal{T}_{m_1+3}' = P_h, $$
$$\mathcal{T}_i = \mathcal{T}_{i-1}' = (P_h \xrightarrow{soc} P_h \xrightarrow{\beta_4 \dots \beta_i} P_{i-1}), \quad m_1+5 \leq i \leq h,$$
$$\mathcal{T}_{m_1+4}= \mathcal{T}_h' = (P_h \xrightarrow{soc} P_h \xrightarrow{\beta_4 \dots \beta_{m_1+4}} P_{m_1+3}),$$
$$\mathcal{T}_i = \mathcal{T}_i' = P_i \text{ otherwise.} $$

We have $\mathcal{H}(\Lambda) = t_{m_1+4}^{-1} \circ \dots \circ t_h^{-1}(\Lambda)$.

 \item\label{difcase3} Let $i=1$, thus, $j=3$. The complex $T_3 = \mathcal{T}_3'$ is the cone of a morphism from $Q_3$ to $(P_h \xrightarrow{\beta_h \dots \beta_4} P_3)[3]\oplus P_{m}$, namely, 
$$ T_3 = \mathcal{T}_3' = (P_h \xrightarrow{\begin{pmatrix} -\beta_h \dots \beta_4\alpha_1 \\ \beta_h \dots \beta_4\alpha_2 \end{pmatrix}} P_1\oplus P_2 \xrightarrow{\begin{pmatrix} \delta_1 & \delta_2 \end{pmatrix}} P_{m} \xrightarrow{soc} P_{m})[3]. $$
Then proceeding as in the previous two cases, we get: 
$$\mathcal{T}_i = \mathcal{T}_{i+l-2}' = P_{i+l-2}, \quad 3 \leq i \leq m-l+1. $$ 
$$\mathcal{T}_{m-l+2} = \mathcal{T}_3' = (P_h \xrightarrow{\begin{pmatrix} -\beta_h \dots \beta_4\alpha_1 \\ \beta_h \dots \beta_4\alpha_2 \end{pmatrix}} P_1\oplus P_2 \xrightarrow{\begin{pmatrix} \delta_1 & \delta_2 \end{pmatrix}} P_{m} \xrightarrow{soc} P_{m})[3]. $$
$$\mathcal{T}_{m-l+3} = \mathcal{T}_h' = (P_h \xrightarrow{\begin{pmatrix} -soc \\ -\beta_h \dots_4\alpha_1 \\ \beta_h \dots \beta_4 \alpha_2 \end{pmatrix}} P_h \oplus P_1 \oplus P_{2} \xrightarrow{\begin{pmatrix} -\beta_h \dots \beta_4 & -\delta_1 \beta_m \dots \beta_4 & -\delta_2 \beta_m \dots \beta_4 \\ 0 & \delta_1 & \delta_2 \end{pmatrix}} P_3 \oplus P_{m} \xrightarrow{soc} P_{m})[3].$$ 
$$\mathcal{T}_i= \mathcal{T}_{i-m+l}' =  $$ $\kern-10.5em \small{(P_h \xrightarrow{\begin{pmatrix} -soc \\ -\beta_h \dots_4\alpha_1 \\ \beta_h \dots \beta_4 \alpha_2 \end{pmatrix}} P_h\oplus P_1 \oplus P_{2} \xrightarrow{\begin{pmatrix} -\beta_h \dots \beta_{i-m+l+1} & -\delta_1 \beta_m \dots \beta_{i-m+l+1} & -\delta_2 \beta_m \dots \beta_{i-m+l+1} \\ 0 & \delta_1 & \delta_2 \end{pmatrix}} P_{i-m+l} \oplus P_{m} \xrightarrow{soc} P_{m}) [3],} $
	$$m-l+4 \leq i \leq m-l+h-1. $$
$$\mathcal{T}_i = \mathcal{T}_{i-m+l+1}' = (P_{i-m+l+1} \xrightarrow{\begin{pmatrix} -\beta_{i-m+l+1} \dots \beta_4\alpha_1 \\ \beta_{i-m+l+1} \dots \beta_4\alpha_2  \end{pmatrix}} P_1\oplus P_{2} \xrightarrow{\begin{pmatrix} \delta_1 & \delta_2 \end{pmatrix}} P_{m} \xrightarrow{soc} P_{m})[3], \quad m-l+h \leq i \leq m-1 $$
$$\mathcal{T}_i = \mathcal{T}_i' = P_i, \text{ otherwise.}$$
$$\mathcal{H}(\Lambda) = t_{m-l+3}^{-1} \circ \dots \circ t_{m-l+h-1}^{-1} \circ t_m \circ  \dots  \circ t_{m-l+3} \circ (t_m \circ \dots \circ t_4 \circ t_3 \circ t_1 \circ t_2 \circ t_3)^{l-2}[-2(l-2)](\Lambda).$$ 

\noindent See Section \ref{sec:app} for details. 

 \end{enumerate}
 \end{enumerate}
 \item Now let $j$ be one of the central edges $1,2$ or $m$. First we suppose that the tree following $j$ in the cyclic ordering is empty. Equivalently, $\mu_j^+(G)$ is a Double Edge tree. 
 
 \begin{enumerate}
     \item\label{difcase4}  Let $j = 1$, $G_2 = \varnothing$. 
       \begin{figure}[H]
  \includegraphics[scale=1.0]{npic6.pdf}
  \label{fig:cases6}
\end{figure}
    Let $\varphi$ be the level function for the Double Edge tree $\mu_1^+(G)$.  The levels of the edges in $\mu_{1}^+(G)$  are as follows
       $$\varphi(i) = \varphi_1(i), \quad 3 \leq i \leq m_1+2, $$ 
       $$ \varphi(i) = \varphi_3(i)+1, \quad m_1 + 3 \leq i \leq m-1,$$
      $$\varphi(m) = 0$$ 
      We have 
      $(\widetilde{\mu}_{\mu_1^+(G)}^{st})^{-1} =  (\mu_{m_1+3}^-)^{\varphi(m_1+3)} \circ \dots \circ (\mu_{m-1}^-)^{\varphi(m-1)} \circ (\mu_3^-)^{\varphi(3)}\circ \dots \circ (\mu_{m_1+2}^-)^{\varphi(m_1+2)}$, thus, 
       $$\mathcal{T}_3 = \mathcal{T}_{m}' = P_{m} $$
         $$\mathcal{T}_i = \mathcal{T}_{i+m_1-1}' = (P_{m} \xrightarrow{\beta_m \dots \beta_{i+m_1}} P_{i+m_1-1}), \quad 4 \leq i \leq m-m_1. $$
      $$\mathcal{T}_i = \mathcal{T}_{i-m+m_1+2}' =  Q_{i-m+m_1+2} =  $$ $$\kern-4.5em (P_{i-m+m_1+2} \xrightarrow{\begin{pmatrix} -\beta_{i-m+m_1+2} \dots \beta_4\alpha_1 \\ \beta_{i-m+m_1+2} \dots \beta_4\alpha_2 \end{pmatrix}} P_1 \oplus P_2 \xrightarrow{\begin{pmatrix}\delta_1 & \delta_2 \end{pmatrix}} P_m)[2], \quad m-m_1+1 \leq i \leq m.$$
      $$\mathcal{T}_2 = \mathcal{T}_{2}' = (P_{2} \xrightarrow{\delta_2 } P_{m})[1] $$ 
      $$\mathcal{T}_1 = \mathcal{T}_1' = (P_1 \xrightarrow{\delta_1} P_{m})[1] $$
  $$\mathcal{H}(\Lambda) = t_3^{-1} \circ t_4^{-1} \circ \dots t_{m-m_1}^{-1} \circ (t_m \circ \dots \circ t_4 t_3 \circ t_1 \circ t_2 \circ t_3)^{m_1}[-2m_1+1](\Lambda).$$
     The next two cases are very similar to the one just examined, so we are going to discuss them briefly.  
     \item\label{caselast} Let  $j = 2, G_1 = \varnothing$. 
         We have $T_2 = (P_{2} \xrightarrow{\delta_2} P_{m} \xrightarrow{\beta_m \dots \beta_4 \alpha_1} P_{1})[2].$ Now let $\varphi$ be the level function of $\mu_2^+(G)$. Then 
      ${\varphi(i) = \varphi_3(i), m_2 +3 \leq i \leq m-1}$, ${ \varphi(i) = \varphi_2(i)+1,  3 \leq i \leq m_2+2}$, ${\varphi(1) = 0}$.
      $$\mathcal{T}_1 = \mathcal{T}_{m}' = P_{m}.$$ 
      $$\mathcal{T}_2 = \mathcal{T}_{2}' = (P_{2} \xrightarrow{\delta_2} P_{m} \xrightarrow{\beta_m \dots \beta_4 \alpha_1} P_{1})[2].$$
      $$\mathcal{T}_3 = \mathcal{T}_{1}' = P_{1}. $$
      $$\mathcal{T}_i = \mathcal{T}_{i-1}' = P_{i-1}, \quad 4 \leq i \leq m. $$
   We have $\mathcal{H}(\Lambda) = t_1^{-1}\circ t_3^{-1}\circ t_1^{-1}\circ t_4^{-1} \circ \dots \circ t_m^{-1}[2](\Lambda)$ (see Section \ref{sec:app} for details).
      \item Let $j=m, G_3 = \varnothing$.  We have $\mathcal{T}_m = P_2[1]$. Again, let $\varphi$ be the level function of $\mu_m^+(G)$. Then $\varphi(i) = \varphi_2(i), m_1+3 \leq i \leq m-1 $, $\varphi(i) = \varphi_1(i)+1,  3\leq i \leq m_1+2$, $\varphi(2) = 0$.
      $$\mathcal{T}_i = \mathcal{T}_{i-1}' = L_{i-1} = (P_{i-1} \xrightarrow{\beta_{i-1} \dots \beta_4 \alpha_1} P_1)[1],  4 \leq i \leq m $$
      $$\mathcal{T}_3 = \mathcal{T}_{2}' = (P_{2} \xrightarrow{\delta_2} P_{m})[1] $$ 
      $$ \mathcal{T}_2 = \mathcal{T}_{m}' =  P_2[1], \mathcal{T}_1 = \mathcal{T}_{1}' = P_1$$ 
      
      We have $\mathcal{H}(\Lambda) = t_2 \circ (t_m \circ \dots \circ t_4 \circ t_3 \circ t_1 \circ t_2 \circ t_3)^{m-3}[-2(m-3)](\Lambda)$ (see Lemma \ref{twistseq1}). 
 \end{enumerate}
 
 \item Finally, it remains to consider the cases when $j$ is a central edge ($1, 2$ or $m$) and the tree following it in the cyclic ordering is {\it not} empty. In other words, $j$ is central and $\mu_j^+(G)$ is again of the Triple Tree type. 
 
 \begin{enumerate}
     \item\label{difcase5} Let $j =1, G_2 \neq \varnothing$. Let $h$ be the edge of $G_2$ which follows $1$ in the cyclic ordering.
    $$(T^{st}_G)_1 = P_1, (T^{st}_G)_{m} = P_{m}   $$
    $$(T^{st}_G)_{h} = (P_{h} \xrightarrow{\beta_h \dots \beta_4\alpha_1} P_1)[1] $$
   Thus, 
     $$T_1 = \bigg(P_{h} \oplus P_1 \xrightarrow{\begin{pmatrix} \beta_{h}\dots \beta_4 \alpha_1 & id \\ 0 & \delta_1  \end{pmatrix}} P_1 \oplus P_{m} \bigg)[1] \sim \bigg(P_{h} \xrightarrow{\beta_h \dots \beta_4 \alpha_1 \delta_1} P_{m} \bigg)[1]$$ 
     \begin{center}

      \begin{figure}[H]
  \includegraphics[scale=1.0]{npic7.pdf}
  \label{fig:cases7}
  \end{figure}
   \end{center}
     
We denote the three trees forming $\mu_1^+(G)$ by $G_1', G_2'$ and $G_3'$, as pictured above. Namely, the tree $G_3'$ has edges with labels $3, \dots, m_1+2$ and $m_1+m_2+3, \dots, m$, the tree $G_2'$ has edges with labels $h+1, \dots, m_1+m_2+2$, and the tree $G_1'$ has edges with labels $m_1 + 3, \dots, h-1$. Edges $1, 2$ and $h$ form the central triangle of $\mu_1^+(G)$. Denote by $\varphi_1', \varphi_2'$ and $\varphi_3'$ the level functions of $G_1'$, $G_2'$ and $G_3'$ respectively. Then
     $$\varphi_3'(i) = \varphi_1(i), \quad 3 \leq i \leq m_1+2 $$
     $$\varphi_3'(m) = 0 $$
     $$\varphi_3'(i) = \varphi_3(i) + 1, \quad m_1+m_2+3 \leq i \leq m-1  $$
     $$ \varphi_1'(i) = \varphi_2(i), \quad m_1+3 \leq i \leq h-1 \quad (i \in G_1') $$
     $$ \varphi_2'(i) = \varphi_2(i)-1, \quad h+1 \leq i \leq m_1 +m_2 + 2 \quad (i \in G_2')  $$
As always, first we apply to $T$ the part of $(\widetilde{\mu}_{\mu_1^+(\Gamma)}^{st})^{-1}$ mutating the three trees $G'_1$, $G'_2$, and $G'_3$ into stars. Denote the resulting complex by $\mathcal{T}'' = (\widetilde{\mu}_{G_3'}^{st})^{-1} \circ(\widetilde{\mu}_{G_2'}^{st})^{-1} \circ (\widetilde{\mu}_{G_1'}^{st})^{-1}(T) $.
We have 
 $$\mathcal{T}_i'' = L_i, \quad i \in G_1', \text{ i.e. } m_1 + 3 \leq i \leq h-1. $$
   $$ \mathcal{T}_i'' = (P_i \xrightarrow{\beta_i \dots \beta_{h+1}} P_h)[2], \quad i \in G_2', \text{ i.e. } h+1 \leq i \leq m_1 + m_2 +2. $$
   $$ \mathcal{T}_i'' = \mathcal{T}_i' = Q_i, \quad 3 \leq i \leq m_1+2.$$
   $$ \mathcal{T}_i'' = \mathcal{T}_i' =  (P_{m} \xrightarrow{\beta_m \dots \beta_{i+1}} P_i), \quad m_1+m_2+3 \leq i \leq m-1.$$
   $$ \mathcal{T}_{m}'' = \mathcal{T}_m' = P_{m}.$$
   $$\mathcal{T}_1'' = T_1, \mathcal{T}_{2}'' = T_{2}, \mathcal{T}_h'' = T_h. $$
      Next we apply $\big((\mu_{m_1+3}^-)^2\circ \dots\circ (\mu_{h-1}^-)^2\big)\circ (\mu_{h+1}^- \circ \dots \circ \mu_{m_1+m_2+2}^-)$ and get 
    $$\mathcal{T}_i' = (P_i \xrightarrow{\beta_i \dots \beta_4 \alpha_1 \delta_1} P_{m})[1], \quad h+1 \leq i \leq m_1+m_2+2. $$  
        $$\mathcal{T}_i' = (P_{m} \xrightarrow{\beta_m \dots \beta_{i+1}} P_{i}), \quad m_1+3 \leq i \leq h-1.$$
    Finally, we apply $\mu_h^-$. 
    $$\mathcal{T}_1 = \mathcal{T}_h' = (P_2 \xrightarrow{ \begin{pmatrix} \delta_2\beta_m \dots \beta_{h+1} \\ -\delta_2 \end{pmatrix}} P_h \oplus P_m \xrightarrow{ \begin{pmatrix} \beta_h \dots \beta_4 \alpha_1 & \beta_m \dots \beta_4 \alpha_1 \end{pmatrix}} P_1)[1].$$
    $$ \mathcal{T}_2 = \mathcal{T}_1' = (P_h \xrightarrow{\beta_h \dots \beta_4 \alpha_1 \delta_1} P_m)[2].$$
    $$\mathcal{T}_i = \mathcal{T}_{i+m_1}' = (P_m \xrightarrow{\beta_m \dots \beta_{i+m_1+1}} P_{i+m_1}), \quad 3 \leq i \leq h-1-m_1.$$
    $$\mathcal{T}_i = \mathcal{T}_{i+m_1+1}' = (P_{i+m_1+1} \xrightarrow{\beta_{i+m_1+1} \dots \beta_4 \alpha_1 \delta_1} P_m)[1], \quad h-m_1 \leq i \leq m_2+1.$$ 
    $$\mathcal{T}_{m_2+2} = \mathcal{T}_m' = P_m $$
    $$\mathcal{T}_i = \mathcal{T}_{i+m_1}' = (P_m \xrightarrow{\beta_m \dots \beta_{i+m_1+1}} P_{i+m_1}), \quad m_2 +3 \leq i \leq m-m_1-1.$$
      $$ \mathcal{T}_i = \mathcal{T}_{i-m+m_1+3}' =  Q_{i-m+m_1+3} = $$ $$ (P_{i-m+m_1+3} \xrightarrow{\begin{pmatrix} -\beta_{i-m+m_1+3} \dots \beta_4 \alpha_1 \\ \beta_{i-m+m_1+3} \dots \beta_4 \alpha_2\end{pmatrix}} P_1 \oplus P_2 \xrightarrow{\begin{pmatrix} \delta_1 & \delta_2 \end{pmatrix}} P_m)[2], \quad m-m_1 \leq i \leq m-1.$$
   $$\mathcal{T}_m = \mathcal{T}_{2}' = (P_2 \xrightarrow{\delta_2} P_m)[1]$$
   $$\mathcal{H}(\Lambda) = t_2t_{m_2+2}\dots t_4t_3t_2^{-1}t_4^{-1} \dots t_{h-m_1}^{-1} t_m \dots t_4t_3t_2t_4^{-1}\dots t_{m-m_1}^{-1}  (t_m \dots t_4 t_3t_1t_2t_3)^{m_1}[-2m_1-1](\Lambda)$$
    See Section \ref{sec:app} for details. The remaining two cases are going to be similar to the one we have just considered.  
    \item\label{difcase6} Let $j=2, G_1 \neq \varnothing$. Let $d$ be the edge of $G_1$, following $2$ in the cyclic ordering.
    
    \begin{center}
     \begin{figure}[H]
 \includegraphics[scale=1.0]{npic8.pdf}
  \label{fig:cases8}
  \end{figure}
    \end{center}
    
   First we calculate $T_{2} = \mathcal{T}_{2}'$. Since  $(T^{st}_{G})_{2} = (P_{2} \xrightarrow{\delta_2} P_{m})[1]$,  $(T^{st}_{G})_d = Q_d$ and $(T^{st}_{G})_1 =  P_1$, we have $T_{2} = \mathcal{T}_{2}' = (P_d \xrightarrow{\beta_d \dots \beta_4\alpha_1} P_1 \xrightarrow{soc} P_1)[2]$. 
     The levels of edges in the new trees forming $\mu_{2}^+(G)$ are as follows. 
     $$\varphi_3'(i) = \varphi_3(i), \quad m_1+m_2+3 \leq i \leq m-1. $$
     $$\varphi_3'(1) = 0 $$
     $$\varphi_3'(i) = \varphi_2(i)+1, \quad m_1 + 3 \leq i \leq m_1+m_2+2. $$
     $$\varphi_2'(i) = \varphi_1(i) - 1, \quad d+1\leq i \leq m_1+2.$$
     $$\varphi_1'(i) = \varphi_1(i), \quad 3\leq i\leq d-1.$$
     
     First we apply to $T$ the part of $(\widetilde{\mu}_{\mu_2^+(\Gamma)}^{st})^{-1}$ mutating the three trees $G'_1$, $G'_2$, and $G'_3$ into stars. As in the previous case, denote the corresponding complex by $\mathcal{T}''$. We immediately have
     $$\mathcal{T}_i'' = \mathcal{T}_i' = P_i, \quad m_1+m_2+3 \leq i \leq m-1. $$
     $$\mathcal{T}_i'' = Q_i, \quad 3 \leq i \leq d-1. $$
     $$\mathcal{T}_i'' = (P_i \xrightarrow{\beta_i \dots \beta_{d+1}} P_d)[3], \quad d+1 \leq i \leq m_1+2.$$
    
Now we compute $\mathcal{T}_i''$ for $m_1+3 \leq i \leq m_1 + m_2+2$. The complex $\mathcal{T}_{i}''$ is the cone of the morphism $(0, id)$ from $T_1 = P_1$ to $(P_i \xrightarrow{\beta_i \dots \beta_4\alpha_1} P_1)[1]$, shifted by $-1$, hence 
$$\mathcal{T}_i'' = P_i, \quad m_1+3 \leq i \leq m_1+m_2+2 $$
Now we shall apply $(\mu_{d+1}^-) \circ \dots \circ (\mu_{m_1+2}^-)$. The complex
$\mathcal{T}_i'$ is the cone of the morphism $(0, id, 0, 0)$ from ${\mathcal{T}_{2}'' = (P_d \xrightarrow{\beta_d \dots \beta_4\alpha_1} P_1 \xrightarrow{soc} P_1)[2]}$ to ${\mathcal{T}_i'' = (P_i \xrightarrow{\beta_i \dots \beta_{d+1}} P_d)[3]}$ for ${d+1 \leq i \leq m_1+2}$. Thus,
$$\mathcal{T}_i' = (P_i \xrightarrow{\beta_i \dots \beta_4 \alpha_1} P_1 \xrightarrow{soc} P_1)[2], \quad d+1 \leq i \leq m_1+2. $$
Next apply $(\mu_3^-)^2 \circ \dots \circ (\mu_{d-1}^-)^2$. Let $3 \leq i \leq d-1$. After one  mutation $\mu_i^{-}$ in this sequence, the summand labeled $i$ will be isomorphic to the cone of the morphism $(0,id,id)$ from $Q_d$ to $Q_i$, shifted by $-1$. This is just $(P_d \xrightarrow{\beta_d \dots \beta_{i+1}} P_i)[2]$. Applying $\mu_i^-$ again, we see that $\mathcal{T}_i'$ is the cone of the morphism $(soc,\delta_1\beta_m\dots\beta_{i+1},0)$ from $(P_d \xrightarrow{\beta_d \dots \beta_{4}\alpha_1} P_1 \xrightarrow{soc} P_1)[2]$ to $(P_d \xrightarrow{\beta_d \dots \beta_{i+1}} P_i)[2]$, shifted by $-1$. 
Hence 
$$\mathcal{T}_i' = (P_d \xrightarrow{\begin{pmatrix} soc \\ -\beta_d \dots \beta_{4}\alpha_1 \end{pmatrix}} P_d \oplus P_1 \xrightarrow{\begin{pmatrix} 0 & -soc  \\ \beta_d \dots \beta_{i+1} &  \delta_1\beta_m \dots \beta_{i+1}  \end{pmatrix}} P_1\oplus P_i)[2], \quad 3 \leq i \leq d-1. $$

Finally, apply $\mu_d^-$. Recall that $T_{m}=P_{m}$, $T_d = Q_d$. Hence $${\mathcal{T}_d' = (P_d \xrightarrow{\begin{pmatrix} -\beta_d \dots \beta_4 \alpha_1 \\ \beta_d \dots \beta_4 \alpha_1 \end{pmatrix}} P_2\oplus P_1)[1]}.$$ Summarising what we obtained above: 

$$\mathcal{T}_1 = \mathcal{T}_2' = (P_d \xrightarrow{\beta_d \dots \beta_4 \alpha_1} P_1 \xrightarrow{soc} P_1)[2].$$ 
$$\mathcal{T}_2 = \mathcal{T}_d' = (P_d \xrightarrow{\begin{pmatrix} -\beta_d \dots \beta_4 \alpha_1 \\ \beta_d \dots \beta_4 \alpha_1 \end{pmatrix}} P_2\oplus P_1)[1].$$ 
$$\mathcal{T}_i = \mathcal{T}_i' = (P_d \xrightarrow{\begin{pmatrix} soc \\ -\beta_d \dots \beta_{4}\alpha_1 \end{pmatrix}} P_d \oplus P_1 \xrightarrow{\begin{pmatrix} 0 & -soc  \\ \beta_d \dots \beta_{i+1} &  \delta_1\beta_m \dots \beta_{i+1}  \end{pmatrix}} P_1\oplus P_i)[2], \quad 3 \leq i \leq d-1.$$
$$\mathcal{T}_i = \mathcal{T}_{i+1}' = (P_{i+1} \xrightarrow{\beta_{i+1} \dots \beta_4 \alpha_1} P_1 \xrightarrow{soc} P_1)[2], \quad d \leq i \leq m_1+1.$$ 
$$\mathcal{T}_{m_1+2} = \mathcal{T}_1' = P_1.$$ 
$$\mathcal{T}_i = \mathcal{T}_i' = P_i, \quad m_1 +3 \leq i \leq m$$
We have $\mathcal{H}(\Lambda) = t_1\circ t_{m_1+2} \circ \dots \circ t_4 \circ t_3 \circ t_1^{-1} \circ t_4^{-1}\circ \dots \circ t_d^{-1}(\Lambda)$ (see Section \ref{sec:app} for details).
\item\label{difcase7}  Let $j=m, G_3 \neq \varnothing$.  Let $l$ be the edge of $G_1$, following $m$ in the cyclic ordering.
   
       We have $\mathcal{T}_{m}' = T_{m} = (P_{2} \xrightarrow{\delta_2 \beta_m \dots \beta_{l+1}} P_l)[1]$. The levels of edges in the new trees are as follows. 
        $$\varphi_2'(i) = \varphi_3(i)-1, \quad l+1 \leq i \leq m-1.$$
         $$\varphi_1'(i) = \varphi_3(i), \quad m_1+m_2+3 \leq i \leq l-1. $$
          $$\varphi_3'(i) = \varphi_2(i), \quad m_1 + 3 \leq i \leq m_1+m_2+2. $$
          $$\varphi'_3(i) = \varphi_1(i)+1, \quad 3 \leq i \leq m_1+2.$$
          $$\varphi_3'(2) = 0.$$
          
          \begin{center}
           \begin{figure}[H]
  \includegraphics[scale=1.0]{npic9.pdf}
  \label{fig:cases9}
  \end{figure}
      
          \end{center}
            Applying to $T$ the part of $(\widetilde{\mu}_{\mu_m^+(\Gamma)}^{st})^{-1}$ which mutates the three trees $G'_1$, $G'_2$, and $G'_3$ into stars, we get 
            $$\mathcal{T}_i'' = P_i, \quad m_1+m_2+3 \leq i \leq l-1.$$      $$\mathcal{T}_i'' = (P_i \xrightarrow{\beta_i \dots \beta_{l+1}} P_l)[1], \quad l+1 \leq i \leq m-1. $$   
             $$\mathcal{T}_i' = \mathcal{T}_i'' = (P_i \xrightarrow{\beta_i \dots \beta_4 \alpha_1} P_1)[1], \quad m_1 + 3 \leq i \leq m_1+m_2+2. $$ 
          $$\mathcal{T}_i' = \mathcal{T}_i'' = (P_i \xrightarrow{\beta_i \dots \beta_4 \alpha_1} P_1)[1], \quad 3 \leq i \leq m_1+2.$$
          $$\mathcal{T}_2' = \mathcal{T}_{2}'' = (P_{2} \xrightarrow{\delta_2} P_{m})[1].$$
          Then, applying the rest of $(\widetilde{\mu}_{\mu_m^+(\Gamma)}^{st})^{-1}$, i.e. ${\mu_m^-\circ(\mu_{l+1}^-)^2 \circ \dots \circ (\mu_{m-1}^-)^2 \circ \mu_{2}^{-}\circ \mu_{3}^- \circ \dots \circ \mu_{m_1+m_2+2}^-}$, we get:
          $$\mathcal{T}_1 = \mathcal{T}_1' = P_1. $$
          $$\mathcal{T}_2 = \mathcal{T}_m' = P_2.$$ 
          $$\mathcal{T}_i = \mathcal{T}_{i+l-2}' = (P_1\oplus P_2 \xrightarrow{ \begin{pmatrix} \delta_1\beta_m \dots \beta_{i+l-1} \\ \delta_1\beta_m \dots \beta_{i+l-1} \end{pmatrix}} P_{i+l-2}), \quad 3 \leq i \leq m-l+1.$$ 
          $$\mathcal{T}_{m-l+2} = \mathcal{T}_2' = (P_1\oplus P_2 \xrightarrow{ \begin{pmatrix} \delta_1 & \delta_2 \end{pmatrix}} P_m).$$
          $$\mathcal{T}_i = \mathcal{T}_{i+l-2}' = P_{i-m+l}, \quad m-l+3 \leq i \leq m.$$ 
          We have $\mathcal{H}(\Lambda) = t_2 \circ t_1 \circ (t_m \circ \dots \circ t_4 \circ t_3 \circ t_1 \circ t_2 \circ t_3)^{l-2}[-2l+2](\Lambda)$ (see Section \ref{sec:app} for details).
 \end{enumerate}
 
    \end{enumerate}

\subsection{Finishing the proof}
Recall that, in the notation of Section \ref{sec:mainres}, to finish the proof of Theorem \ref{thm:main}, it remains to express the complexes $(\widetilde{\mu}_{G_1}^{st})^{-1} \circ \mu_{j_1}^+(\Lambda)$, $\mu_{j_k}^+ \circ \widetilde{\mu}_{G_{k-1}}^{st}(\Lambda^{k-1})$ as images of $\Lambda$ under series of spherical twists $t_i$ (modulo $\Pic(\Lambda)$ and the shift)

.  

 \begin{enumerate}
     \item Fix any $j_1$. Let $\mathcal{H}^1$ denote the autoequivalence of $D^b(\Lambda)$  induced by $(\widetilde{\mu}_{G_1}^{st})^{-1} \circ \mu_{j_1}^+$ and let $\mathcal{T}^1$ be the tilting complex $\mathcal{H}^1(\Lambda)$. We denote $\mathcal{H}^1(P_i)$ by $\mathcal{T}_i^1$. It is easy to see that if $j_1 \neq 3$, then $\mathcal{T}^1 = \Lambda$. If $j_1 = 3$, then the sequence $(\widetilde{\mu}_{G_1}^{st})^{-1}$ is empty, so
     
     $$ \mathcal{T}_i^1 = P_{i+1}, \quad 3 \leq i \leq m-1. $$
     $$ \mathcal{T}_m^1 = (P_{3} \xrightarrow{\begin{pmatrix}\alpha_1 \\ \alpha_2 \end{pmatrix}} P_1 \oplus P_{2})[1]. $$
     
    By Lemma \ref{twistseq1} we have $\mathcal{H}^1(\Lambda) = t_m \circ \dots \circ t_4 \circ t_3 \circ t_1 \circ t_2 \circ t_3[-2](\Lambda)$. 
     
     \item Next we need to deal with complexes of the form $\mu_{j_k}^+ \circ \widetilde{\mu}_{G_{k-1}}^{st}(\Lambda^{k-1})$. We can assume without loss of generality that the labeling of summands of $\Lambda^{k-1}$ is standard and write simply $\mu_{j_k}^+ \circ \mu_{G_{k-1}}^{st}(\Lambda)$. Let $\mathcal{H}^k$ autoequivalence of $D^b(\Lambda)$ induced by $\mu_{j_k}^+ \circ \mu_{G_{k-1}}^{st}$. The fact that $\End_{D^b(\Lambda)}(\mu_{j_k}^+ \circ \mu_{G_{k-1}}^{st}(\Lambda)) \cong \Lambda$ leaves us with few options. Namely, there are two possibilities. 
     
     \begin{enumerate}
         \item $\mu_{G_{k-1}}^{st} = \mu_i^+$, where $4 \leq i \leq m$. Then $j_k = i$ and $\mathcal{H}^k(\Lambda) = (\mu_i^+)^2(\Lambda) = t_i(\Lambda)$.
         \item $\mu_{G_{k-1}}^{st} = \mu_2^+$. Then $j_k = 2$ and we have $ = \mu_{j_k}^+ \circ \mu_{G_{k-1}}^{st} = (\mu_2^+)^2$, hence $\mathcal{H}^k$ acts on indecomposable summands in the following way.
         
         $$\mathcal{H}^k(P_1) = P_m, \mathcal{H}^k(P_2) = (P_2 \xrightarrow{\delta_2} P_m \xrightarrow{\beta_m\dots\beta_4\alpha_1} P_1)[2] $$
         $$\mathcal{H}^k(P_3) = P_1$$ 
         $$\mathcal{H}^k(P_i) = P_{i-1}, \quad 4 \leq i \leq m$$
         
        We have already seen this autoequivalence in case \ref{caselast}. More precisely, ${\mathcal{H}^k(\Lambda) = t_1^{-1} \circ t_3^{-1} \circ t_1^{-1} \circ t_4^{-1} \dots t_m^{-1}[2](\Lambda)}$. 
        
     \end{enumerate}

 \end{enumerate}
 
 \section{Appendix}\label{sec:app} 
In Section \ref{sec:stanyst} we computed all tilting complexes $\mathcal{H}(\Lambda)$, $\mathcal{H}^1(\Lambda)$, $\mathcal{H}^k(\Lambda)$ of the form $(\widetilde{\mu}_{\mu_j^+(G)}^{st})^{-1} \circ \mu_j^+ \circ \mu_G^{st}(\Lambda)$, $(\widetilde{\mu}_{G}^{st})^{-1} \circ \mu_{j}^+(\Lambda)$, and $\mu_{j}^+ \circ \mu_{G_{k}}^{st}(\Lambda)$ respectively. In each case we stated how such a complex is expressed as an image of $\Lambda$ under a composition of spherical twists $\{t_i\}_{i}^m$, $\Pic(\Lambda)$ and shifts. Many expressions are straightforward to obtain, but some require explanations. In the Appendix we sketch step-by-step computations that justify these difficult cases.

The computations will be displayed as sequences of modified Brauer trees whose edges are labeled with indecomposable direct summands of tilting complexes obtained at every step. We already used this technique in the proof of Lemma \ref{thm:twviamut}, where complexes of the form $t_i^{\pm1}(\Lambda)$ were expressed as images of $\Lambda$ under compositions of mutations and shifts. However, now we will work with bigger ``chunks'' of mutations and all modified Brauer trees that we show will be Double Edge stars. In each of the cases we start with the modified Brauer tree of $\Lambda$ whose edges are labeled with indecomposable projective $\Lambda-$modules. Now let $\mathcal{C} = \bigoplus_{i=1}^m C_i$ be a tilting complex over with $\End_{D^b(\Lambda)}(\mathcal{C}) \cong \Lambda$ and suppose we have the Double Edge star whose edges are labeled with indecomoposable complexes $C_i$ (the double edge is labeled with two complexes). Suppose that we want to compute $t_i^{\pm1}(\mathcal{C})$. By Lemma \ref{thm:twviamut} we can express $t_i^{\pm1}$ as a composition of mutations and shifts. In turn, a sequence of mutations yields a sequence of modified Brauer trees whose edges are labeled with direct summands of tilting complexes, as explained in the proof of Lemma \ref{thm:twviamut}. Since $t_i^{\pm 1}$ is an autoequivalence of $\D^b(\Lambda)$, the last modified Brauer tree in this sequence is again the Double Edge star. In the computations below we skip all intermediate steps where the resulting modified Brauer tree is not the star and only show the trees corresponding to compositions of twists.

When there are several consequent twists in the sequence acting similarly, we often group them into one step. When there are several consequent edges in a Double Edge star which should be labeled with complexes that differ only in one index (e.g. $P_i \to P_1$ for several $i$'s), we draw a brace around these edges and write the complex only once below the brace. To know which summand of the tilting complex is assigned to a particular edge one needs to take the integer written on this edge and substitute it in place of $i$ in the complex written below the brace. For instance, in the first picture below on the left (case 2b) we have edges labeled with $P_l$, $P_{l+1}, \dots, P_m$ and edges labeled with $({P_3 \xrightarrow{} \underline{P_1 \oplus P_2}), \dots, (P_{l-1} \xrightarrow{} \underline{P_1 \oplus P_2})}$. See also the discussion after Lemma \ref{twistseq1}. To make the pictures more compact we omit the differentials in our complexes. Nevertheless, they can always be easily recovered from the provided data. The resulting tilting complexes with differentials are listed in Section \ref{sec:stanyst}. All computations here are modulo $\Pic(\Lambda)$ (see Theorem \ref{thm:pic}).

\begin{enumerate}
    \item[\underline{Case \ref{difcase1}}.]\label{app1} $\mathcal{H}(\Lambda) = t_{m-l+5}^{-1} \circ \dots \circ t_{m-l+h+1}^{-1} \circ (t_m \circ \dots \circ t_4 \circ t_3 \circ t_1 \circ t_2 \circ t_3)^{l-3}[-2(l-3)](\Lambda)$
    
      \begin{figure}[H]
 \includegraphics[scale=1.1]{appic1.pdf}
 \tiny{ \caption*{\kern-7em Step 1.\small{$(t_m \circ \dots \circ t_4 \circ t_3 \circ t_1 \circ t_2 \circ t_3)^{l-3}[-2(l-3)]$, by Lemma \ref{twistseq1}} \ \ \ \ \ \  Step 2. \small{$t_{m-l+5}^{-1} \circ \dots \circ t_{m-l+h+1}^{-1}$} }}
  \label{fig:app1}
  \end{figure}
  \item[\underline{Case \ref{difcase2}}.]\label{app2}
  
  $\mathcal{H}(\Lambda) = t_m \circ \dots t_4 \circ t_3 \circ t_1 \circ t_4^{-1}  \circ \dots \circ t_h^{-1}[-1](\Lambda)$
  
  \begin{figure}[H]
 \includegraphics[scale=0.85]{appic2.pdf}
 \tiny{ \caption*{\kern-4em Step 1.$t_4^{-1} \circ \dots \circ t_h^{-1}$ \ \ \ \ \ \ \ \ \quad \quad Step 2. $ t_1$ \ \ \ \ \ \quad \quad \quad \quad  Step 3. $ t_3[-1]$ \ \ \ \ \ \ \ \ \quad \quad \quad \quad   Step 4. $t_m \circ \dots \circ t_4$ }}
  \label{fig:app2}
  \end{figure}
   
   \item[\underline{Case \ref{difcase3}}]
   $\mathcal{H}(\Lambda) = t_{m-l+3}^{-1} \circ \dots \circ t_{m-l+h-1}^{-1} \circ t_m \circ  \dots  \circ t_{m-l+3} \circ (t_m \circ \dots \circ t_4 \circ t_3 \circ t_1 \circ t_2 \circ t_3)^{l-2}[-2(l-2)](\Lambda)$ 
   
   \begin{figure}[H]
 \includegraphics[scale=0.95]{appic31.pdf}
 \caption*{\kern-4em \footnotesize{Step 1. $(t_m \circ \dots \circ t_4 \circ t_3 \circ t_1 \circ t_2 \circ t_3)^{l-2}[-2(l-2)]$, by Lemma \ref{twistseq1} \ \ \ \ Step 2. $t_m \circ  \dots  \circ t_{m-l+3}$ \ \ \ \ \  Step 3. $t_{m-l+3}^{-1} \circ \dots \circ t_{m-l+h-1}^{-1}$ }}
  \label{fig:app3}
  \end{figure}
   
      \item[\underline{Case \ref{difcase4}}] $\mathcal{H}(\Lambda) = t_3^{-1} \circ t_4^{-1} \circ \dots t_{m-m_1}^{-1} \circ (t_m \circ \dots \circ t_4 t_3 \circ t_1 \circ t_2 \circ t_3)^{m_1}[-2m_1+1](\Lambda)$.
      
         \begin{figure}[H]
 \includegraphics[scale=0.95]{appic4.pdf}
 \tiny{ \caption*{\kern-6em \footnotesize{ Step 1.$(t_m \circ \dots \circ t_4 t_3 \circ t_1 \circ t_2 \circ t_3)^{m_1}[-2m_1]$, by Lemma \ref{twistseq1} \ \ \ \quad \quad \quad \quad   Step 2. $ t_4^{-1} \circ \dots t_{m-m_1}^{-1}$ \ \ \ \quad \quad \quad \quad  Step 3. $ t_3^{-1}[1]$ }}}
  \label{fig:app4}
  \end{figure}
  
 \item[\underline{Case \ref{caselast}}]
 $\mathcal{H}(\Lambda) = t_1^{-1}\circ t_3^{-1}\circ t_1^{-1}\circ t_4^{-1} \circ \dots \circ t_m^{-1}[2](\Lambda)$
  \begin{figure}[H]
 \includegraphics[scale=0.95]{appiclast.pdf}
 \tiny{ \caption*{\kern-5em Step 1. $ t_4^{-1} \circ \dots \circ t_m^{-1}$ \ \ \ \quad \quad \quad \quad  Step 2. $ t_1^{-1}$ \ \ \ \quad \quad \quad \quad  Step 3. $ t_3^{-1}[1]$  \ \ \ \quad \quad \quad \quad \quad  Step 4. $t_1^{-1}[1]$ }}
  \label{fig:applast}
  \end{figure}

\item[\underline{Case \ref{difcase5}}]
    $\mathcal{H}(\Lambda) = t_2t_{m_2+2}\dots t_4t_3t_2^{-1}t_4^{-1} \dots t_{h-m_1}^{-1} t_m \dots t_4t_3t_2t_4^{-1}\dots t_{m-m_1}^{-1}  (t_m \dots t_4 t_3t_1t_2t_3)^{m_1}[-2m_1-1](\Lambda)$
    
    \begin{figure}[H]
 \includegraphics[scale=0.95]{appic61.pdf}
 \tiny{ \caption*{\kern-4.5em \small{Step 1. $(t_m \circ \dots \circ t_4 \circ t_3 \circ t_1 \circ t_2 \circ t_3)^{m_1}[-2m_1]$  \quad  \quad  Step 2. $t_4^{-1}\circ \dots \circ t_{m-m_1}^{-1}$  \quad \quad   Step 3. $t_2$  \quad \quad \quad  Step 4. $t_3[-1]$ }}}
  \label{fig:app61}
  \end{figure}
  
    \begin{figure}[H]
 \includegraphics[scale=0.95]{appic62.pdf}
 \tiny{ \caption*{\kern-5em Step 5. $t_m \circ \dots \circ t_4$  \quad \quad \quad \quad \quad \quad  Step 6. $t_4^{-1} \dots t_{h-m_1}^{-1}$  \quad \quad \quad \quad \quad \quad  Step 7. $t_2^{-1}[-1]$  }}
  \label{fig:app62}
  \end{figure}
  
      \begin{figure}[H]
 \includegraphics[scale=0.97]{appic63.pdf}
 \tiny{ \caption*{\kern-5em Step 8. $t_3[-1]$  \quad \quad \quad \quad \quad \quad  Step 9. $t_{m_2+2}\dots t_4$  \quad \quad \quad \quad \quad \quad  Step 10. $t_2$  }}
  \label{fig:app63}
  \end{figure}
    
\item[\underline{Case \ref{difcase6}}]
  
  $\mathcal{H}(\Lambda) = t_1 \circ t_{m_1+2} \circ \dots \circ t_4 \circ t_3 \circ t_1^{-1} \circ t_4^{-1}  \circ \dots  \circ t_d^{-1}(\Lambda)$
  
  \begin{figure}[H]
 \includegraphics[scale=0.95]{appic51.pdf}
 \tiny{ \caption*{\kern-2em Step 1.$ t_4^{-1} \circ \dots \circ t_d^{-1}$ \ \ \ \quad \quad \quad \quad \quad \quad \quad Step 2. $ t_1^{-1}$ \ \ \ \quad \quad \quad \quad \quad \quad \quad \quad \quad \quad \quad Step 3. $ t_3[-1]$  \ \ \ \quad \quad \quad \quad \quad  }}
  \label{fig:app51}
  \end{figure}
  \begin{figure}[H]
 \includegraphics[scale=0.9]{appic52.pdf}
 \tiny{ \caption*{\kern-5em Step 4.$ t_4^{-1} \circ \dots \circ t_d^{-1}$ \ \ \ \quad \quad \quad \quad \quad \quad \quad \quad \quad \quad \quad \quad \quad \quad \quad Step 5. $ t_1[1]$ \ \ \ \quad \quad \quad \quad }}
  \label{fig:app52}
  \end{figure}
  
  \item[\underline{Case \ref{difcase7}}]
  $\mathcal{H}(\Lambda) = t_2 \circ t_1 \circ (t_m \circ \dots \circ t_4 \circ t_3 \circ t_1 \circ t_2 \circ t_3)^{l-2}[-2l+2](\Lambda)$
   \begin{figure}[H]
 \includegraphics[scale=1.0]{appic7.pdf}
 \tiny{ \caption*{\kern-5em Step 1.$(t_m \circ \dots \circ t_4 \circ t_3 \circ t_1 \circ t_2 \circ t_3)^{l-2}[-2l+4] $ \ \ \ \quad \quad \quad \quad \quad \quad \quad \quad  Step 2. $ t_2 \circ t_1 [-2]$}}
  \label{fig:app7}
  \end{figure}

\end{enumerate}

\end{document}